
\documentclass[11pt]{article}
\usepackage{color}
\usepackage{amsmath}
\usepackage{amssymb}
\usepackage{amsthm}
\usepackage{epstopdf}
\usepackage{graphicx}
\usepackage{mathtools}
\usepackage[usenames,dvipsnames, table]{xcolor}
\usepackage[hang]{caption}
\usepackage{hyperref}
\usepackage{oldgerm}  
\usepackage[yyyymmdd,hhmmss]{datetime}

\usepackage{mathabx} 
\def\bee{\begin{enumerate}}\def\eee{\end{enumerate}}
\def\bei{\begin{itemize}}\def\eei{\end{itemize}}
\oddsidemargin = -0.3 in
\evensidemargin =-0.3 in
\headheight = -1 in
\textwidth = 6.5 in
\textheight = 9.5 in
\newcommand{\nco}{\newcommand}
\def\R{\mathbb{R}}\def\C{\mathbb{C}}
\nco{\red}{\color{red}}
\nco{\blue}{\color{blue}}
\nco{\cyan}{\color{cyan}}
\nco{\brown}{\color{Magenta}}

\nco{\magenta}{\color{magenta}}

\nco{\violet}{\color{violet}}
\nco{\orange}{\color{orange}}
\nco{\redend}{\normalcolor}
\nco{\blueend}{\normalcolor}
\def\inv#1{\frac{1}{#1}}
\def\tr{{\rm tr}\,}
\def\ommit#1{{}}
\def\({\left(}
\def\){\right)}
\def\ie{{\it i.e.,\/}\ }
\def\ie{{\rm i.e.,\/}\ }

\nco{\rnc}{\renewcommand}
\rnc{\title}[1]{{\Large\bf\mbox{}\\\medskip#1\bigskip\medskip\\}}
\rnc{\author}[1]{{\large #1\smallskip\\}}
\nco{\address}[1]{{\em #1\medskip\\}}

\newcommand\Sf{s\!f}
\def\diag{{\rm diag \,}}
\def\ii{\mathrm{i\,}}
\nco{\bun}{{\bf 1}}
\def\be{\begin{equation}}\def\ee{\end{equation}}
\def\bea{\begin{eqnarray}}\def\eea{\end{eqnarray}}
\def\bee{\begin{enumerate}}\def\eee{\end{enumerate}}
\def\bei{\begin{itemize}}\def\eei{\end{itemize}}
\def\oh{\frac{1}{2}}
\def\ommit#1{{}}

\def\SU{{\rm SU}}\def\U{{\rm U}}
\def\inv#1{\frac{1}{#1}}

\def\tr{{\rm tr\, }}

\def\eq=#1{\buildrel #1 \over{=}}
\def\CA{{\mathcal A}}
\def\CH{{\mathcal H}}     \def\CJ{{\mathcal J}} \def\CK{{\mathcal K}}  \def\CO{{\mathcal O}}
\def\CP{{\mathcal P}}\def\CV{{\mathcal V}}
\def\bH{\mathbf{H}}
\def\bH{\widetilde{\mathbf{H}}}
\def\diag{{\rm diag \,}}
\def\ii{\mathrm{i\,}}

\def\Z{\mathbb{Z}}

\def\R{\mathbb{R}}
\def\C{\mathbb{C}}
\def\T{\mathbb{T}}
\newtheorem{theorem}{Theorem}
\newtheorem{lemma}{Lemma}
\newtheorem{conjecture}{Conjecture}
\newtheorem{corollary}{Corollary}
\newtheorem{definition}{Definition}
\newtheorem{proposition}{Proposition}

\begin{document}
\begin{titlepage}
\begin{center}
\title{From orbital measures  \\
to Littlewood-Richardson coefficients\\ and hive polytopes}
\medskip
\author{Robert Coquereaux} 
\address{Aix Marseille Univ, Universit\'e de Toulon, CNRS, CPT, Marseille, France\\
robert.coquereaux@gmail.com}
\bigskip\medskip
\centerline{and}
\medskip
\author{Jean-Bernard Zuber}
\address{
 Sorbonne Universit\'e, UPMC Univ Paris 06, UMR 7589, LPTHE, F-75005, 
Paris, France\\
\& CNRS, UMR 7589, LPTHE, F-75005, Paris, France\\jean-bernard.zuber@upmc.fr
 }

\begin{abstract}
{The volume of the hive polytope  (or polytope of honeycombs) associated with a Littlewood-Richardson coefficient of SU($n$), or with a given admissible triple of highest weights, is expressed, in the generic case, in terms  of the Fourier transform of a convolution product of orbital measures.
Several properties of this function ---a function of three non-necessarily integral weights or of three multiplets of real eigenvalues for the associated Horn problem--- are already known.
In the integral case it can be thought of as a semi-classical approximation of Littlewood-Richardson coefficients.
We prove that it may be expressed as a local average of a finite number of such coefficients.
We also relate this function to the Littlewood-Richardson polynomials (stretching polynomials)  \ie  to the Ehrhart polynomials of the relevant hive polytopes. 
Several SU($n$) examples,  for $n=2,3,\ldots,6$, are explicitly worked out.}
\end{abstract}
\end{center}

{\it Keywords:} {Horn problem.  Honeycombs. Polytopes. SU(n)  Littlewood--Richardson coefficients.}
\\

{\it Mathematics Subject Classification 2010: 17B08, 17B10, 22E46, 43A75, 52Bxx}

\end{titlepage}

\section*{Introduction} 
 In a previous paper \cite{JB-I}, 	 the following classical Horn's problem was addressed.
For two $n$ by $n$ Hermitian matrices $A$ and $B$ independently and uniformly 
distributed on their respective  unitary coadjoint orbits ${\mathcal O}_\alpha$ and   ${\mathcal O}_\beta$, labelled by their eigenvalues $\alpha$ and $\beta$, call $p(\gamma \vert \alpha, \beta)$
the  probability distribution function (PDF) of the eigenvalues $\gamma$ of their sum $C =A+B$. 
With no loss of generality, we assume throughout this paper that these eigenvalues are ordered, 
\be\label{orderalpha} \alpha_1\ge \alpha_2\ge \cdots\ge \alpha_n\ee 
and likewise for $\beta$ and $\gamma$.
In plain (probabilistic) terms, $p$ describes the conditional probability of  $\gamma$, given $\alpha$ and $\beta$. 
The general expression of $p$ was given in \cite{JB-I} in terms of orbital integrals and computed explicitly for low values of $n$.

 The aim of the present paper  is to study  the relations between  this function  $p$,
	and the tensor product multiplicities for irreducible representations (irreps) of the Lie groups $\U(n)$ or $\SU(n)$,  encoded by the Littlewood-Richardson (LR) coefficients. 
	
	Our main results are the following. 
	A central role is played by a function $\CJ_n(\alpha,\beta;\gamma)$ proportional to $p$,
		times a ratio of Vandermonde determinants, see (\ref{pSUn}).
This $\CJ_n$ is identified with the volume of the hive polytope
 (also called polytope  of honeycombs) associated with the triple 
$(\alpha,\beta;\gamma)$, see Proposition \ref{Inasavol}. It is thus known \cite{Heck82}  to provide the asymptotic behavior
of LR coefficients, for large weights. We find a relation between $\CJ_n$ and a sum of
LR coefficients over a {\it local}, finite, $n$-dependent, set of weights, which holds true irrespective of the 
asymptotic limit, see Theorem \ref{CI-LR}. In particular for SU(3), the sum is trivial and enables one
to express the LR coefficient as a piecewise linear function of the weights, see Proposition \ref{CI3}
and Corollary \ref{LRSU3}. 
 Implications on the stretching polynomial (sometimes called Littlewood-Richardson polynomial) and its coefficients are then investigated. 
 \\[5pt]
The content of this paper is as follows.
In sec.\,\ref{Conv}, we recall some basic facts on the geometric setting and on  tensor {\sl and} hive polytopes. We also
 collect formulae and results obtained in \cite{JB-I} on the function $\CJ_n$. Section 2 is
devoted to the connection between Harish-Chandra's orbital integrals and $\SU(n)$ character formulae,
to its implication on the relation between $\CJ_n$ and LR coefficients (Theorem \ref{CI-LR}), and
to consequences of the latter. In sec. 3, we reexamine the interpretation of $\CJ_n$ as the volume
of the hive polytope  in the generic case (Proposition 4),  through the analysis of the asymptotic regime.
 In the last section (examples), we take $n=2,3\ldots,6$, consider for each case the expression obtained for $\CJ_n$, give the local relation existing between the latter and LR coefficients (this involves two polynomials, 
that we call $R_n$ and $\widehat R_n$,  expressed as
characters of $\SU(n)$), and study the corresponding stretching polynomials. 
Some of the features studied in the main body of this article are finally illustrated in the last subsection where we consider a few specific hive polytopes.

\section{Convolution of orbital measures, density function and polytopes}
\label{Conv}

\subsection{Underlying geometrical picture}

	We consider a particular Gelfand pair $(\U(n) \ltimes H_n, \U(n))$ associated with the group action of  the Lie group $\U(n)$ on the vector space of $n$ by $n$ Hermitian matrices. 
	This geometrical setup allows one to develop a kind of harmonic analysis where ``points'' are replaced by coadjoint orbits of $\U(n)$ : the Dirac measure (delta function at the point $a$)
	is replaced by an orbital measure whose definition will be recalled below, and its Fourier transform, here an orbital transform, is given by the so-called  Harish-Chandra orbital function.
	This theory of  integral transforms can also be considered as a generalization of the usual Radon spherical transform (also called Funk transform).
	Contrarily to Dirac measures, orbital measures are not discrete, since their supports are orbits of the chosen Lie group. 
	Such a measure is described by a probability density function (PDF),  which is its Radon-Nikodym derivative with respect to the Lebesgue measure.
	\\
	In Fourier theory one may consider the measure formally defined as a convolution product of Dirac masses:
	 $<\delta_a \star \delta_b , f>  = \int \delta_{a+b}(x) f(x) dx$.
	Here we shall consider, instead, the convolution product of two orbital measures described by the orbital analog of $\delta_{a+b}(c)$, a probability density function 
	labelled by three $\U(n)$ orbits of $H_n$. 
	 These orbits and that function $p$ may be considered as  functions of three Hermitian matrices (we shall write it $p(C \vert A,B)$), and this 
	 answers a natural question in the context of the classical Horn problem,
	as mentioned above in the Introduction, see also sec.\,\ref{convorb} below.
	  This was spelled out in paper \cite{JB-I}.
	Our main concern, here, is the study of  the relations that exist between  this function  $p$,
	and the tensor product multiplicities for irreducible representations (irreps) of the Lie groups $\U(n)$ or $\SU(n)$,  encoded by the Littlewood-Richardson (LR) coefficients $N_{\lambda \mu}^{\nu}$.
	For small values of $n$ the function $p$ can be  explicitly calculated; for integral 	values of its arguments, the related function $\CJ_n$ can be considered as a semi-classical approximation of the LR coefficients.

\subsubsection{Orbital measures} 

For $F$, a function on the space of orbits, and {${\mathcal O}_A$}, the orbit going through  $A \in H_n$,  
one could formally consider the ``delta function''  $
<\delta_{{\mathcal O}_A} , F> = F({\mathcal O}_A)$,
 but we shall use test functions defined on $H_n$ instead. 

The orbital measure $m_A$, that plays the role of $\delta_{{\mathcal O}_A}$,  is therefore defined, for any continuous function $f$ on $H_n$, by 
 \[<m_A, f> = \int_{\U(n)} f(u^\star A u) du \] 
 where the integral is taken with respect to the Haar mesure\footnote{{In practice we use the 
normalized Haar measure that makes the volume of $\U(n)$ equal to $1$.}} on $\U(n)$, \ie by averaging the function $f$ on a $\U(n)$ coadjoint orbit.
	
\subsubsection{Fourier transform of orbital measures}
		Despite the appearance of the Haar measure on the group $\U(n)$ entering the definition of 
		$m_A$,  one should notice that  this is a measure on the vector space $H_n$, an abelian group.
		Being an analog of the Dirac measure, its orbital transform\footnote{The context being specified, people often simply write ``Fourier transform'' or ``Fourier orbital transform'' rather than ``spherical transform'' or ``orbital transform''.} is  a complex-valued function  $\widehat{m_A}(X)$ on $H_n$ defined by evaluating $m_A$ on the following exponential function:   $Y\in H_n \mapsto \exp(\ii \tr(X\,Y))\in \C$.
Hence we obtain : 
  \[\widehat{m_A}(X) = \int_{\U(n)} \exp(\ii \tr(X u^\star A u)) du \]
 As this quantity only depends on the respective eigenvalues of $X$ and $A$, \ie on the diagonal matrices $x=(x_1, x_2, \ldots x_n)$, and $\alpha = (\alpha_1, \alpha_2, \ldots, \alpha_n)$, it is then standard to rename the previous Fourier transform and consider the following two-variable function, called the Harish-Chandra orbital function: 
\be \label{HCfn}
{\CH}(\alpha, \ii x) =  \int_{\U(n)} \exp(\ii \tr (x u^\star \alpha u)) \, du
\ee

\subsubsection{The HCIZ integral} 
\label{secHCIZ}
The following explicit expression of  ${\CH}$ was found in \cite{HC,IZ}.
\be \label{HCIZ}
{\CH}(\alpha, \ii x) =
\Sf(n-1)\, \frac{(\text{det} \, e^ {\ii x_i \alpha_j})_{1 \leq i, j \leq n}}{\Delta(ix) \Delta(\alpha)}
\ee
where 
\[\Delta(x)=\Pi_{i<j} (x_i-x_j)\]
 is the Vandermonde determinant of the $x$'s.\\
Here and in the following we make use of the {\it superfactorial}
\be\label{superfact} \Sf(m):= \prod_{p=1}^m p!\,. \ee

{\ommit{
\subsubsection{Invariant probability measures on $H_n$ and their orbital transforms}
 An invariant measure on $H_n$ can be written $ f(A) \, dA$, where $f(A)$, its probability density (PDF), is a normalized, positive, and invariant  (\ie $f(u A u^\star) = f(A)$) function on $H_n$ and where $dA$ is the Lebesgue measure on $H_n$. It can be thought of as a measure on the space of $\U(n)$ orbits.  Now that we have appropriate analogs of the Dirac measures and of  their Fourier transforms, we may consider the orbital transform of an invariant measure:
 \[ \hat f (X) = \int_{H_n} \widehat{m_A}(X) \,  f(A) \, dA \,. \]
}}

\subsubsection{Convolution product of orbital measures} 
\label{convorb}
Take two orbits of the group $\U(n)$ acting on $H_n$, labelled by Hermitian matrices $A$ and $B$, and consider the corresponding orbital measures $m_A$, $m_B$.
The convolution product of the latter is defined as usual: with $f$, a function on $H_n$, one sets
  \[ <m_A \star m_B, f> = <m_A \otimes m_B, \blacktriangle(f)>\] 
 where 
 \[\blacktriangle(f)(a,b)\,  := f(a+b)\,.\] 
 This orbital analog of $\delta_{a+b}(c)$ has a non discrete support:  for $A, B \in H_n$, the support of 
 $m_{A,B}=m_A \star m_B$ is the set of $uAu^\star + vBv^\star$ for $u, v \in \U(n)$.
The probability density function $p$ of  $m_{A,B}$
is obtained by applying an inverse Fourier transformation to the product of Fourier transforms (calculated using $\widehat{m_A}(X)$) of the two measures:
\be\label{genform}
p(\gamma\vert\alpha, \beta) =  \frac{1}{(2\pi)^n} \, { \(\frac{\Delta(\gamma)}{\Sf(n) }\)^2} 
 \int_{\R^n} d^nx \, \Delta(x)^2 \, {\CH}(\alpha, \ii x) {\CH}(\beta, \ii x) {\CH}(\gamma, \ii x)^\star \,.
\ee
Notice that $p$ involves three copies of the HCIZ integral and that we wrote it as an integral on $\R^n$, whence the prefactor coming from the Jacobian of the change of variables.
We shall see below (formulae extracted from \cite{JB-I}) how to obtain quite explicit formulae for this expression.

\subsection{On polytopes}
\label{polytopes}
In the present context of  orbit sums and 
representation theory, one encounters two kinds of polytopes, not to be confused
with one another. \\
 On the one hand,  
given  two 
 multiplets $\alpha$ and $\beta$, ordered  as in (\ref{orderalpha}), 
 we have what may be called the {\it Horn polytope} $\bH_{\alpha \beta}$, which is the convex 
 hull of all possible ordered $\gamma$'s that appear 
 in the sum of the two orbits $\CO_\alpha$ and $\CO_\beta$. 
As proved by Knutson and Tao \cite{KT99} that Horn  polytope is identical to the convex set of real  
solutions to Horn's inequalities, including the inequalities (\ref{orderalpha}), applied to $\gamma$. 
For $\SU(n)$, this Horn  
polytope is $(n-1)$-dimensional.
\\

On the other hand, combinatorial models associate to such a triple 
$(\alpha,\beta;\gamma)$, 
  with $\gamma\in \bH_{\alpha \beta}$,
a family of
graphical objects that we call generically {\it pictographs}.  
This family depends on a number $(n-1)(n-2)/2$ of real parameters, subject to linear inequalities, 
thus defining a $d$-dimensional polytope $\widetilde{\CH}_{\alpha\,\beta}^\gamma$, with $d\le (n-1)(n-2)/2$.

These two types of polytopes are particularly useful in the discussion of highest weight representations 
of $\SU(n)$ and their tensor product decompositions.

Given two highest weight representations $V_\lambda$ and $V_\mu$ of $\SU(n)$,
we look at  the decomposition into irreps of $V_\lambda\otimes V_\mu$, or of $\lambda\otimes \mu$, in short,
see below sec.\,\ref{HornToLR}.
Consider a particular space of intertwiners {(equivariant morphisms)}
associated with a certain ``branching", \ie a  particular  term $\nu$ in 
that decomposition,  that we call an {\it admissible} triple $(\lambda,\mu ; \nu)$, see below Definition \ref{defadm}. 
 {Such $\nu$'s lie in the
 {\it tensor} polytope $\mathbf{H}_{\lambda\mu}$ inside the weight space.
 The multiplicity $N_{\lambda \mu}^\nu$ of
 $\nu$ in the tensor product $\lambda \otimes \mu$} is the dimension 
 of the space of intertwiners
 determined by the admissible triple $(\lambda,\mu; \nu)$.
{As proved in \cite{KT99}, is is also the number of pictographs with integral parameters. 
It is thus also the number of integral points in the second polytope that we now denote $\CH_{\lambda\,\mu}^\nu$.
These integral points  may be conveniently thought of as describing 
the different ``couplings'' of the three chosen irreducible representations.}

Pictographs are of several kinds.  All of them have three ``sides'' but one may distinguish two families: first we have those pictographs with sides labelled by integer partitions (KT-honeycombs \cite{KT99}, KT-hives \cite{KTW01}), then we have those pictographs with sides labelled by highest weight components of the chosen irreps (BZ-triangles \cite{BZ}, O-blades \cite{Ocn}, isometric honeycombs\footnote{The reader may look at \cite{CZ14} for an explicit descriptions and a few examples of O-blades and isometric honeycombs in the framework of the Lie group SU(3).  See also our  $\SU(4)$ example in sec.\,\ref{exampleSU4}.}). {For convenience, we refer to $\CH_{\lambda\,\mu}^\nu$ as the ``hive polytope'',
or also   ``the polytope of honeycombs''.}

As mentioned above, for $\SU(n)$,
 and for an admissible triple $(\lambda,\mu ; \nu)$, the dimension of the hive polytope is
 $(n-1)(n-2)/2$: this  may  be taken as a definition of a ``generic triple", but  
 see below Lemma \ref{conj1} for  a more precise characterization.
 The cartesian equations for the boundary hyperplanes have 
 integral coefficients, the hive polytope  is therefore a rational polytope.
All the hive polytopes that we consider in this article are ``integral hive polytopes'' in the terminology of \cite{KTT04},  however the corners of all such polytopes (usually called ``vertices'') are not always integral points, therefore an ``integral hive polytope'' is not necessarily an integral polytope in the usual sense: 
 the convex hull of its integral points is itself a polytope, but there are cases where the latter is strictly included in the former. We shall see an example of this situation in sec.\,\ref{exampleSU5}.

 We shall return later to these polytopes and to the counting functions of their integral points, in relation with stretched Littlewood-Richardson coefficients, see sec.\,\ref{LREpols}.

\subsection{Some formulae and results from paper \cite{JB-I}}

\subsubsection{Determination of the density  $p$ and of the kernel function ${\CJ}_n$}
\label{pandIn}
	Some general expressions for the three variable function $p$ were obtained in 	\cite{JB-I}.
	 For the convenience of the reader, 	 we repeat them here. 	 

The determinant entering the HCIZ integral is written as

\bea\label{detx} \det e^{\ii x_i \alpha_j }&=&e^{\ii \inv{n}\sum_{j=1}^n x_j \sum_{k=1}^n \alpha_k} \det e^{\ii (x_i-\inv{n}\sum x_k) \alpha_j }
\\ \label{detSUn} &=& e^{\ii \inv{n}\sum_{j=1}^n x_j\sum_{k=1}^n \alpha_k}   \sum_{P\in S_n} \varepsilon_P 
\prod_{j=1}^{n-1} e^{\ii (x_j-x_{j+1})  ( \sum_{k=1}^j \alpha_{P(k)} -\frac{j}{n} \sum_{k=1}^n \alpha_k)}\,,\eea
where $\varepsilon_P$ is the signature of permutation $P$.

In the product of the three determinants  entering (\ref{genform}), the prefactor $e^{\ii \sum_{j=1}^n x_j \sum_{k=1}^n (\alpha_k+\beta_k-\gamma_k)/n}$
yields, upon integration over $\inv{n}\sum x_j$, $2\pi$ times a Dirac delta of $\sum_k(\alpha_k+\beta_k-\gamma_k)$, expressing the conservation of the trace
in Horn's problem. One is left with an expression involving an integration over $(n-1)$ variables
$u_j:=x_j-x_{j+1}$.

\bea\label{pSUn} p(\gamma|\alpha,\beta)&=&
 \frac{\Sf(n-1)}{ n!}\delta(\sum_k(\alpha_k+\beta_k-\gamma_k))\, \frac{ \Delta(\gamma)}{\Delta(\alpha)\Delta(\beta)}  \,\CJ_n(\alpha,\beta;\gamma) 
\\ \label{In}
\CJ_n(\alpha,\beta;\gamma) &=&\,  \frac{\ii^{-n(n-1)/2}}{2^{n-1}\, n! \,\pi^{n-1}}  
\sum_{P,P',P''\in S_n}\varepsilon_P\,\varepsilon_{P'}\, \varepsilon_{P''}\, 
\int_{\R^{n-1}} \frac{d^{n-1}u}{\widetilde\Delta(u)}\,\prod_{j=1}^{n-1}  e^{\ii u_j A_j(P,P',P'')}
\\ \label{Aj} A_j(P,P',P'')&=& \sum_{k=1}^j (\alpha_{P(k)}+\beta_{P'(k)}-\gamma_{P''(k)}) -
\frac{j}{n} \sum_{k=1}^n (\alpha_k+\beta_k-\gamma_k),
\eea
 where the Vandermonde $\Delta(x)$ has been rewritten as 
 \be\label{tildeDelta}\widetilde\Delta(u):=
  \prod_{1\le i < j\le n} (u_i+\cdots + u_{j-1})\,. \ee 
\\

\subsubsection{Discussion}
\label{discIn}
 Several properties of $p(\gamma\vert\alpha, \beta)$ and of ${\CJ}_n$ are described in the  paper \cite{JB-I}.	
We only summarize here the information that will be relevant for our discussion relating these functions to the Littlewood-Richardson multiplicity problem.

Note that the above expression of $A_j$ is invariant under simultaneous translations of all 
$\gamma$'s
$$\forall\ i\qquad \gamma_i\to \gamma_i+c\,\qquad\quad   c\in \R\,.$$
In the original Horn problem, this reflects the fact that
 the PDF $p(\gamma|\alpha,\beta)$ of eigenvalues of $C=A+B$ is the same as that of $C+c I$, with a shifted support.  
Therefore in the computation of $\CJ_n(\alpha,\beta;\gamma)$,  one has a freedom in the choice of a   ``gauge"\\[4pt]
(a) either $\gamma_n=0$, \\[3pt]
(b) or $\gamma$ such that 
\be\label{equtr} \sum_i \gamma_i = \sum_i (\alpha_i +\beta_i)\,,\ee
(c) or any other choice,\\
provided one takes into account the second term in the rhs of (\ref{Aj}) (which vanishes in case (b)).\\
Note also that  enforcing (\ref{equtr}) starting from an arbitrary $\hat\gamma$ implies to translate
$\hat \gamma\to \gamma=\hat\gamma+c$, with  
$ c=\inv{n} (\sum_i \alpha_i +\sum_i \beta_i -\sum_i \hat\gamma_i)$. If the original $\hat\gamma$
has integral components, this is generally not the case for the final $\gamma$.
\\[10pt]
$\CJ_n(\alpha,\beta;\gamma)$ has the following properties that will be used below: 
\\
 -- (i) As apparent on (\ref{In}), it is an antisymmetric function of $\alpha$, $\beta$ or $\gamma$ under the action of
the Weyl group of $\SU(n)$ (the symmetric group $S_n$). As already said,  we choose throughout this paper
the ordering (\ref{orderalpha})
 and  likewise for $\beta$ and $\gamma$. \\
For  $(\alpha,\beta;\gamma)$ satisfying (\ref{equtr})\\
-- (ii) $\CJ_n(\alpha,\beta;\gamma)$ is  piecewise polynomial, homogeneous of degree $\oh(n-1)(n-2)$ in $\alpha, \beta, \gamma$ in the generic case;\\
-- (iii)  as a function of $\gamma$,  it is of class $C^{n-3}$. This follows by the Riemann--Lebesgue 
theorem from the decay at large $u$ of the integrand in (\ref{In}), see \cite{JB-I};\\
 -- (iv) it is non negative inside the  polytope $\bH_{\alpha\beta}$, cf sec.\,1.2; \\
-- (v) it vanishes for ordered $\gamma$ outside  $\bH_{\alpha \beta}$;\\
-- (vi) by continuity (for $n\ge 3$) it vanishes for $\gamma$ at the boundary of $\bH_{\alpha \beta}$;\\
-- (vii) it also vanishes whenever
 at least two components of $\alpha$ or of $\beta$ coincide\footnote{If $\alpha$ and $\beta$ are Young partitions describing the highest weights $\lambda$, $\mu$ of two $\U(n)$ or $\SU(n)$ irreps, this occurs  
 when some Dynkin label of $\lambda$ or $\mu$ vanishes, \ie when $\lambda$ or $\mu$ belongs to a wall of the 
 dominant Weyl
 chamber $C$.}: this follows from 
the antisymmetry mentionned above;  \\
-- (viii) its normalization follows from that of the probability density $p$, (normalized of course by\\
$\int_{\R^n} d^n\gamma\, p(\gamma|\alpha,\beta)=1$), hence
\be\label{normIn} 
\int_{\bH_{\alpha\beta}} d^{n-1}\gamma\,
\frac{ \Delta(\gamma)}{\Delta(\alpha)\Delta(\beta)}  \,\CJ_n(\alpha,\beta;\gamma)  
= \inv{\Sf(n-1)}
\ee
 which equals
 $1, \inv{2}, \inv{12}, \inv{288}, \inv{34560},
 \cdots$ for $n=2,3,4,\cdots$. \\
 
\smallskip
 As mentioned above, it is natural to adopt the following definition
 \begin{definition}\label{defgen} A triple $(\alpha,\beta;\gamma)$ is called {\rm generic} if 
 $\CJ_n(\alpha,\beta;\gamma)$ is non vanishing. \end{definition} 
 
By a slight abuse of language, when dealing with triples of highest weights $(\lambda, \mu;\nu)$,
we say that such an admissible triple is generic iff the associated triple $(\alpha,\beta;\gamma)$ is,
see below sec. \ref{part-hw}. By another abuse of language, we also refer to a {\it single} highest weight $\lambda$ as
{\it generic} iff none of its Dynkin indices vanishes, \ie iff $\lambda$ does not lie on one of the walls
of the dominant Weyl chamber, or if equivalently the associated $\alpha$ has no pair of equal components.

From its interpretation as a probability density (up to positive factors), it is clear that $\CJ_n$ could vanish 
 at most on subsets of measure zero inside the Horn (or tensor) polytope. 
 Actually it does not vanish besides the cases mentioned in points (v-vii) of the previous list, as we now argue.

We want to  construct the linear span of honeycombs $\widetilde{\CH}_{\alpha\beta}^\gamma$ defined above in sect. 1.2.
 We first consider what may be called the ``$\SU(n)$ case", where $\alpha_n=\beta_n=0$ and $\gamma_n$ is fixed by (12).  
By relaxing   the inequalities on the $(n-1)(n-2)/2$ parameters defining the usual honeycombs,  one builds
a vector space of dimension $\frac{1}{2} (n-1) (n+4) =3(n-1) +(n-1)(n-2)/2$ whose elements are sometimes called  {\sl real} honeycombs. One may construct a basis of ``fundamental honeycombs", see [10],  and 
consider arbitrary linear combinations, with real coefficients, of these basis vectors.
The components of any admissible triple$(\alpha,\beta,\gamma)$, 
depend linearly of the components of the associated honeycombs along the chosen basis. 
In such a way, one obtains a surjective linear map,  from the vector space of real honeycombs,  to the vector space $\R^{3(n-1)}$. 

One sees immediately that its fibers are affine spaces of dimension $d_{{\rm max}}=(n-1)(n-2)/2$, 
and for fixed  $\alpha,\beta$ they are indexed by $\gamma$, \ie by points of $\R^{(n-1)}$.
By  taking into account 
the inequalities defining usual honeycombs,  but still working with real coefficients, the fibers of this map restrict to compact polytopes whose affine dimension $d$ is at most equal to  $d_{{\rm max}}$ (the dimension can be smaller, because of the inequalities that define bounding hyperplanes). 
For given $\alpha$ and $\beta$, if $\gamma$ belongs to  
the Horn polytope $\widetilde{\mathbf{H}}_{\alpha\beta} \subset \R^{(n-1)}$, the corresponding restricted fiber is nothing else than the associated hive polytope $\widetilde{\CH}_{\alpha\beta}^\gamma$. We therefore obtain a map $\pi$ whose target set is the Horn polytope, a convex set, and whose fibers are compact polytopes.
We then make use of  the following result\footnote{We thank Allen Knutson for pointing this out to us.}:  the dimension of the fibers of $\pi$ is constant on the interiors of the faces of its target set.
In particular, it is constant on the interior of its face of codimension $0$, which is the interior of 
the Horn polytope $\widetilde{\mathbf{H}}_{\alpha\beta} $.

In the present situation this tells us that the dimension of  $\pi^{-1}(\gamma) \,=\,\widetilde{\CH}_{\alpha\beta}^\gamma$
which is the fiber above  $\gamma$, is constant 
when $\gamma$  belongs to the interior of the Horn polytope $\widetilde{\mathbf{H}}_{\alpha\beta} $.
 In particular, its $d$-dimensional volume, where $d$ has its maximal value $d=(n-1)(n-2)/2$ for $\SU(n)$, cannot vanish there.
We shall see later (in section 3) 
that this volume is given by  $\CJ_n(\alpha,\beta;\gamma)$.

In the case of GL$(n)$, (with $\alpha_n,\beta_n$ non fixed to 0),  the argument is similar,  so we have: 

 \begin{lemma}\label{conj1}{For $\alpha$ and $\beta$ with distinct components,  
the function $\CJ_n(\alpha,\beta;\gamma)$ does not vanish for $\gamma$ inside the polytope 
 $\bH_{\alpha \beta}$.}\end{lemma}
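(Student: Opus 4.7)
The plan is to deduce Lemma \ref{conj1} from the geometric interpretation of $\CJ_n(\alpha,\beta;\gamma)$ as the Euclidean volume of the hive polytope $\widetilde{\CH}_{\alpha\beta}^\gamma$, anticipating the identification formalized in Proposition 4 of Section 3. Granted that identification, the statement reduces to showing that for $\gamma$ strictly inside $\bH_{\alpha\beta}$ (and $\alpha,\beta$ with distinct components, so that the prefactor $\Delta(\alpha)\Delta(\beta)$ in (\ref{pSUn}) is nonzero and genericity is not spoiled by property (vii)), the hive polytope attains the maximal affine dimension $d_{\rm max}=(n-1)(n-2)/2$.

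First I would set up the projection $\pi$ from the linear span of real honeycombs onto the Horn polytope $\bH_{\alpha\beta}$, as described just before the lemma. The source is an affine space of dimension $\tfrac{1}{2}(n-1)(n+4)$, the target the $(n-1)$-dimensional Horn polytope (using (\ref{equtr}) in the $\SU(n)$ case, or the analogous GL$(n)$ version otherwise), and the generic fibers of the underlying linear map are affine of dimension $d_{\rm max}=(n-1)(n-2)/2$. Intersecting with the linear inequalities that cut out honest (non-negative) honeycombs replaces each fiber by the compact polytope $\widetilde{\CH}_{\alpha\beta}^\gamma$, of some affine dimension $d\le d_{\rm max}$.

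Next I would invoke the key structural fact (attributed in the text to Knutson): for a surjective affine map between polytopes, the fiber dimension is constant on the relative interior of each face of the target. Applied to $\pi$ and the top-dimensional face (i.e.\ the interior of $\bH_{\alpha\beta}$), this yields a single value of $d$ throughout the interior. Since the source polytope is nonempty with full affine dimension and $\pi$ is surjective, there must be at least one fiber achieving $d=d_{\rm max}$, and by constancy \emph{every} interior fiber does. Consequently its $d_{\rm max}$-dimensional Euclidean volume is strictly positive, and by the volume interpretation of $\CJ_n$ one concludes $\CJ_n(\alpha,\beta;\gamma)>0$ for $\gamma$ in the interior of $\bH_{\alpha\beta}$.

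The main obstacle is the appeal to the constancy-of-fiber-dimension result; without it, one could a priori imagine the hive polytope collapsing to a lower-dimensional slice for a codimension-zero subset of $\gamma$. A secondary subtlety is the boundary behaviour: property (vi) already forces vanishing at $\partial\bH_{\alpha\beta}$ for $n\ge 3$, so the statement is genuinely about the interior, and I would emphasize that the hypothesis of distinct components of $\alpha,\beta$ is exactly what avoids the trivial vanishing from property (vii) and keeps the honeycomb model non-degenerate.
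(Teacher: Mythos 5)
Your proposal is correct and follows essentially the same route as the paper: the authors also construct the surjective map $\pi$ from the space of real honeycombs, identify its restricted fibers with the hive polytopes $\widetilde{\CH}_{\alpha\beta}^\gamma$, invoke the constancy of fiber dimension on the relative interiors of the faces of the target (the Horn polytope), and conclude positivity of the $d_{\rm max}$-volume, which is then identified with $\CJ_n$ via the analysis of Section 3. The only cosmetic difference is that you make slightly more explicit the (still brief) argument that the constant fiber dimension must equal $d_{\rm max}=(n-1)(n-2)/2$ rather than some smaller value.
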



\section{From Horn to Littlewood-Richardson and from orbital transforms to characters}
\label{HornToLR}
\subsection{Young partitions and highest weights}
\label{part-hw}

An irreducible  polynomial representation
 of GL$(n)$ or an irrep of $\SU(n)$, denoted
 $V_\lambda$,  is
characterized by its highest weight  $\lambda$ (h.w. for short). One may use alternative notations, describing 
this highest weight either by its Dynkin indices (components in a basis of fundamental weights)
 $\lambda_i$, $i=1,\cdots,n$,  and $\lambda_n=0$ in $\SU(n)$ ; or by its Young components,
 \ie the lengths of rows of the corresponding Young diagram: $\alpha=\ell(\lambda)$, \ie 
\be \label{defell} \ell_i(\lambda)=\sum_{j=i}^{n} \lambda_j\, \qquad i=1,\cdots,n\, .\ee 
Note that such an $\alpha=\ell(\lambda)$ satisfies the ordering condition (\ref{orderalpha}).\\
  In the decomposition into irreps of the tensor product of two such irreps
 $V_\lambda$ and $V_\mu$ of GL$(n)$,   we denote by  $ N_{\lambda\mu}^\nu $  
the Littlewood-Richardson (LR)  multiplicity of $V_\nu$.
\\
 As recalled above, 
$ N_{\lambda\mu}^\nu $  equals the number of honeycombs with integral labels 
and boundary conditions $\alpha=\ell(\lambda),\ \beta=\ell(\mu), \ \gamma=\ell(\nu)$,
 \ie the number of integral points in the polytope $\CH_{\lambda\,\mu}^\nu$  \cite{KT99}.
 \\ 
Given three U($n$) (resp. $\SU(n)$) weights $\lambda,\mu,\nu$, for instance described by their $n$ (resp. $n-1$) components along the basis of fundamental weights, 
invariance under the U($1$)  center of U($n$) (resp. the $\Z_n$  center of $\SU(n)$), 
tells us that a necessary condition for the non-vanishing of $N_{\lambda\mu}^\nu$
is  $\sum_{j=1}^{n}  j(\lambda_j+\mu_j-\nu_j) =0$ (resp. $\sum_{j=1}^{n-1}  j(\lambda_j+\mu_j-\nu_j) =0 \mod n$).
\\
Given three $\SU(n)$ weights $\lambda,\mu,\nu$ obeying the above $\SU(n)$ condition, one can build 
three U($n$) weights (still denoted  $\lambda,\mu,\nu$) obeying the U($n$) condition
by setting $\lambda_n=\mu_n=0$ and $\nu_n=\inv{n}\sum_{j=1}^{n-1}  j(\lambda_j+\mu_j-\nu_j)$;
in terms of partitions, with $\alpha = \ell(\lambda)$, $\beta = \ell (\mu)$ and $\gamma = \ell (\nu)$, the obtained triple $(\alpha, \beta; \gamma)$
 automatically obeys eq.~(\ref{equtr}).\\
More generally we shall refer to a U($n$) triple such that the equivalent U($n$) conditions eq.~(\ref{equtr}), or eq.~(\ref{adm}) below, hold true, as a U($n$)-compatible triple, or  a {\it compatible} triple, for short.
 \begin{definition}\label{defcomp}{A triple $(\lambda,\mu;\nu)$ of $\U(n)$ weights is said to be {\rm compatible} iff 
\be\label{adm}{\sum_{k=1}^{{n}} k (\lambda_k+\mu_k -\nu_k)} =0\,.\ee }
\end{definition}
For triples of $\SU(n)$ weights, we could use the same terminology, weakening the above condition (\ref{adm}) since it is then only assumed to hold modulo $n$, 
but in the following we shall always extend such $\SU(n)$-compatible triples to U($n$)-compatible triples, as was explained previously.
\\
We also recall another more traditional definition
 \begin{definition}\label{defadm}{A triple $(\lambda,\mu;\nu)$ of $\U(n)$ or $\SU(n)$ weights is said to be {\rm admissible} iff 
$N_{\lambda\mu}^\nu\ne 0$\,.}
\end{definition}
The reader should remember (at least in the context of this article\,!) the difference between compatibility and admissibility, the former being obviously a necessary condition for the latter.


\bigskip

For given $\lambda$ and $\mu$, or equivalently, given $\alpha$ and $\beta$, if $N_{\lambda\mu}^\nu\ne 0$ 
for some h.w. $\nu$, the corresponding $\gamma$ must lie 
inside {\it or on the boundary of} the Horn polytope $\bH_{\alpha\beta}$, by definition of the latter. 
Since for $n\ge 3$ the function 
$\CJ_n(\alpha,\beta;\gamma)$ is continuous and vanishes on the boundary of its support,  evaluating it for 
$\alpha,\beta, \gamma$  
does not provide a strong enough criterion to identify  admissible triples $(\alpha,\beta;\gamma)$.

\subsection{Relation between  Weyl's character formula and the HCIZ integral}
\label{WHCIZ}

There is an obvious  similarity between the general form (\ref{genform}) of the PDF 
$p(\gamma|\alpha,\beta)$ and the expression of the LR multiplicity 
$N_{\lambda\mu}^\nu$ as the integral of the product of characters $\chi_\lambda \chi_\mu \chi_\nu^*$ over the unitary group  SU(n) or over its Cartan torus $\T_n \,= \U(1)^{n-1}$ 
\be\label{LRintSUn}
N_{\lambda\mu}^\nu =\int_{\SU(n)}du\, \chi_\lambda(u) \chi_\mu(u) \chi_\nu^*(u)  \quad  \text{or} \quad
N_{\lambda\mu}^\nu =\int_{ \T_n} dT\, \chi_\lambda(T) \chi_\mu(T) \chi_\nu^*(T) \ee
with the normalized Haar measure on $ \T_n$,
\be\label{HaarTn} dT=  \inv{ (2\pi)^{n-1} n! }|\Delta(e^{\ii t})|^2  \prod_{i=1}^{n-1} dt_i\,,\ee
  for
\be\label{diagT} T=\diag(e^{\ii t_j})_{j=1,\cdots,n} \qquad \mathrm{with}\ \sum_{j=1}^n t_j=0\,.\ee

This similarity finds its root in the 
Kirillov \cite{Kirillov} formula expressing $\chi_\lambda$ as the orbital
function ${\CH}$ relative to $\CO_{\ell(\lambda+\rho)}$, defined in (\ref{HCfn}), see below 
(\ref{char2}-\ref{Weylform});  note the shift of $\lambda$ by the Weyl vector $\rho$, the half-sum of positive roots.

Recall Weyl's 
formula for the dimension of  the  vector space $V_\lambda$ of h.w. $\lambda$
 \be\label{dimVla}   \dim V_\lambda = \frac{\Delta(\alpha')}{\Sf(n-1)}\qquad \mathrm{with}\ \alpha'=\ell(\lambda+\rho)\,, \quad \text{  and $\ell$ as defined  in (\ref{defell})}\,.
 \ee
From a geometrical point of view, this formula expresses $ \dim V_\lambda $ as the volume of a group orbit normalized by the volume of $\SU(n)$, the latter being also equal to $\Sf(n-1)$, once
a  natural Haar measure has been chosen, see \cite{Macdonald}.
 \\

\subsubsection{From group characters to Harish-Chandra orbital functions}

Kirillov's formula \cite{Kirillov} relates Weyl's $\SU(n)$ character formula with the orbital function of $\CO_{\alpha'} $.
Here and below, the prime on $\alpha'$ refers to the value of $\alpha$, 
for the {\it shifted}  highest weights $\lambda+\rho$ 
\be \alpha'=\ell(\lambda+\rho)\,,\ee and likewise for $\beta', \gamma'$. 
Indeed evaluated on an element $T$ of the $\SU(n)$ Cartan torus  as in (\ref{diagT}),
Weyl's character formula reads 
\be\label{char1}  \chi_{\lambda} (T):= \tr_{V_\lambda}(T)=\frac{\det{e^{\ii t_i \alpha'_j}}} {\Delta(e^{\ii t})} \quad \text{with} \quad \Delta(e^{\ii t}) = {\prod_{1\le i< j\le n} (e^{\ii t_i}-e^{\ii t_j})} \,, 
\ee
or in terms of the orbital function $\CH$ defined in (\ref{HCfn}) and made explicit in (\ref{HCIZ})
\be\label{char2}  \chi_{\lambda} (T)=  \frac{ \Delta(\alpha')
}{\Sf(n-1)}\ \(\prod_{1\le i< j\le n} \frac{\ii(t_i-t_j)} {(e^{\ii t_i}-e^{\ii t_j})} \)\CH(\alpha', \ii t)  
\ee
or, owing to  the Weyl dimension formula (\ref{dimVla})  
\be\label{Weylform}  \frac{\chi_{\lambda} (T)}{\dim V_\lambda} = 
\frac{\Delta({\ii t})}{\Delta( e^{\ii t})}\,\CH(\alpha', \ii t)\, .\ee


\subsubsection{The polynomial $R_n(T)$}
\label{polRn}
  
 Consider the following (semi-convergent) integral
  $$ J=\int_\R du \frac{e^{\ii u A}}{u}\qquad A\in \R$$
  a one-dimensional analogue of the integral encountered in (\ref{In}). If $A$ is a 
  half-integer, we may write
 \bea\nonumber
A\ \hbox{half-integer} \qquad J&=& \int_{-\pi}^\pi du\, e^{\ii u A} \sum_{n=-\infty}^\infty \frac{(-1)^n}{u+n(2\pi)}
 \\ \nonumber
 &=& \int_{-\pi}^\pi du\, e^{\ii u A} \inv{2\sin(u/2)}\eea
  according to a well-known identity.  If $A$ is an integer, 
  the previous sum over $n$ is understood as a principal value.
  Then
  \be\nonumber  A\ \mathrm{integer} \qquad J = \int_{-\pi}^\pi du\, e^{\ii u A}\  \ P.V.\!\!\! \sum_{n=-\infty}^\infty \inv{u+n(2\pi)}
 = \int_{-\pi}^\pi du\, e^{\ii u A} \inv{2\tan(u/2)}\ee
  
  We now repeat this simple calculation for the $(n-1)$-dimensional integral
  appearing in (\ref{In}), evaluated  either for unshifted $\alpha, \beta, \gamma$ or 
  for shifted $\alpha', \beta', \gamma'$, 
    associated  as above with  a compatible triple of highest 
  weights $(\lambda,\mu;\nu)$.
  
  First we observe that the determinant $ \det e^{\ii (x_i-\inv{n}\sum x_k) \alpha'_j }$ that appears 
  in the first line of (\ref{detSUn}) is nothing else than the numerator of Weyl's formula (\ref{char1}) 
  for the $\SU(n)$ character 
  $\chi_\lambda(T)$, evaluated for the unitary and unimodular matrix 
  \be\label{T}T=\diag\big(e^{\ii (x_i-\inv{n}\sum x_k)}\big)\,.\ee
Henceforth we take $t_i=(x_i-\inv{n}\sum x_k)$, $\sum t_i=0$. 
Consider now the product of three such determinants as they appear in the 
computation of $\CJ_n(\alpha',\beta';\gamma')$, see (\ref{In}).
 Each factor $e^{\ii \sum_j u_j A_j }$, 
  under $2\pi$-shifts of the variables $u_j:= t_j-t_{j+1}$,
  $u_j\to u_j + p_j (2\pi)$,  is not necessarily periodic, because of the second term of $A_j$ in (\ref{Aj}):
  $$ e^{\ii \sum_j u_j A_j } \to e^{\ii  \sum_j u_j A_j }  e^{- 2\pi \ii \sum_j   \frac{j p_j}{n}\sum_k  (\alpha'_k
  +\beta'_k-\gamma'_k)}\,.$$
  Indeed, for $\alpha'=\ell(\lambda+\rho)$, etc,  we have 
    $$\sum_{k=1}^n (\alpha'_k +\beta'_k-\gamma'_k)=
  {\sum_{k=1}^{n-1} k (\lambda_k+\mu_k -\nu_k)}
   +\frac{n(n-1)}{2}\,,$$ 
   the first term of which vanishes  for a compatible  triple $(\lambda,\mu;\nu)$, 
   see (\ref{adm}). 
    Thus we find that under the above shift, $ e^{\ii \sum_j u_j A_j } \to e^{\ii  \sum_j u_j A_j } (-1)^{\sum_j j (n-1) p_j}$.
  For $n$ odd, like in SU(3), the numerator is $2\pi$-periodic in each variable $u_j$. For $n$ even, however, 
  we have a sign $(-1)^{j p_j}$.   
  We may thus compactify the integration domain  of the $u$-variables, 
  bringing it from $\R^{n-1}$ back 
  to $(-\pi,\pi)^{n-1}$ by translations $u_j\to u_j +(2\pi) p_j$, while taking the above sign into account.
  Thus for a compatible triple $(\lambda,\mu;\nu)$ and the $A_j$'s standing for the expressions of (\ref{Aj})
  computed at shifted weights $\alpha'=\ell(\lambda+\rho)$ and likewise for $\beta'$ and $\gamma'$, we have

 $$
\int_{\R^{n-1} }  \frac{\prod_{j=1}^{n-1} d u_j \, e^{\ii u_j A_j}}{\widetilde \Delta(u)}
=  \int_{(-\pi,\pi)^{n-1} }  \prod_{j=1}^{n-1} d u_j \, e^{\ii u_j A_j}  D_n$$
where
\be\label{defDn}
D_n= \sum_{p_1, \cdots, p_{n-1} =-\infty}^\infty (-1)^{\sum_j j p_j (n-1)} \prod_{1\le i < i'\le n}   \frac{1}{u_i+u_{i+1}+\cdots +u_{i'-1}+(p_i+\cdots +p_{i'-1})(2\pi)}\, ,\ee
 a sum that always converges.
Now define
\bea\label{defpin} 
\varpi_n&:=& \prod_{1\le i < i'\le n} 2 \sin(\oh(u_i+u_{i+1}+\cdots +u_{i'-1}))=
{ \ii^{-n(n-1)/2}} \Delta(e^{\ii t_i}) \\
 \label{shiftDelta}   R_n(T)
 &:=& D_n \varpi_n\,.
    \eea
$R_n$, as  defined by (\ref{shiftDelta}), is a function of $T$ with no singularity, 
since all the poles of the original
  expression  $\widetilde \Delta(u)^{-1}$ have been embodied in the denominator ${\Delta(e^{\ii t_i})}$. It 
  must be a polynomial  in $T$ and $T^\star$,  invariant under permutations 
and complex conjugation, hence a real symmetric polynomial  of the $e^{\ii t_j}$.
(Since $\det T=1$, $T^\star$ is itself a polynomial in $T$.) 
We conclude that $R_n(T)$ may be expanded on real characters $\chi_\kappa(T)$, $\kappa\in \CK$,  with $\CK$ a finite
 $n$-dependent set of highest weights. Moreover
  $R_n(I)=1$,  as may be seen by looking at the small $t$ limit of (\ref{shiftDelta}). 
 Thus
 \begin{proposition}\label{propcomp}{The integrals over $\R^{n-1}$ appearing in $\CJ_n(\alpha',\beta';\gamma')$ in (\ref{In}),
  for $\alpha'=\ell(\lambda+\rho),\, \beta'=\ell(\mu+\rho),\, \gamma'=\ell(\nu+\rho)$,  $(\lambda,\mu;\nu)$ a compatible triple, may be
 ``compactified" in the form
 \be\label{compact}\int_{\R^{n-1} }  \frac{\prod_{j=1}^{n-1} d u_j  e^{\ii u_j A_j}}{\widetilde \Delta(u)}
 ={ \ii^{n(n-1)/2}}  \int_{(-\pi,\pi)^{n-1} }  \prod_{j=1}^{n-1} d u_j \,  e^{\ii u_j A_j} \, \dfrac{R_n(T)}{\Delta(e^{\ii t_i})} \ee
 where the real polynomial $R_n(T)$ is defined through  (\ref{shiftDelta}). 
There exists a finite, $n$-dependent set $\CK$ of highest weights such that $R_n(T)$ may be written as  
 a  linear combination
 $R_n(T)=\sum_{\kappa \in \CK}
 r_\kappa \chi_\kappa(T)$ of real characters. The coefficients $r_\kappa$ are rational and such that, 
when evaluated at the identity matrix, $R_n(I)=1$. } \end{proposition}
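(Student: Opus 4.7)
The plan is to verify in sequence: (i) the compactification formula (\ref{compact}); (ii) regularity of $R_n$; (iii) descent to the torus; (iv) the Laurent-polynomial structure and the resulting expansion on characters; (v) the normalization $R_n(I)=1$. For (i), I would partition $\R^{n-1}=\sqcup_{p\in\Z^{n-1}}(2\pi p+(-\pi,\pi)^{n-1})$ and compute the phase acquired by $\prod_j e^{\ii u_jA_j}$ under $u\to u+2\pi p$. At shifted weights $\alpha'=\ell(\lambda+\rho)$, the identity $\sum_{k=1}^n \ell_k(\lambda+\rho)=\sum_{j=1}^{n-1}j\lambda_j+n(n-1)/2$ combined with compatibility (\ref{adm}) yields $\sum_k(\alpha'+\beta'-\gamma')_k=n(n-1)/2$. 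Hence each $A_j$ equals an integer minus $j(n-1)/2$, and the phase collapses cleanly to $(-1)^{(n-1)\sum_j j p_j}$. Assembling the signed series over $p$ into $D_n$ from (\ref{defDn}) and inserting the identity $\varpi_n=\ii^{-n(n-1)/2}\Delta(e^{\ii t})$ from (\ref{defpin}) then delivers (\ref{compact}), the prefactor $\ii^{n(n-1)/2}$ coming from this identification.

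For (ii), $D_n$ has only simple poles on the hyperplanes $u_i+\cdots+u_{j-1}\in 2\pi\Z$, with exactly one term of the lattice sum contributing at each, while the matching factor $2\sin((u_i+\cdots+u_{j-1})/2)$ in $\varpi_n$ supplies a compensating simple zero and the remaining factors of $\varpi_n$ are non-vanishing; so $R_n$ is regular. For (iii), under $u_k\to u_k+2\pi$, a reindexing of the lattice sum gives $D_n\to(-1)^{k(n-1)}D_n$, and the $k(n-k)$ sine factors of $\varpi_n$ indexed by pairs with $i\le k<j$ each flip sign, producing $(-1)^{k(n-k)}\varpi_n$; the product picks up $(-1)^{k(2n-1-k)}=+1$, since $k(k+1)$ is always even. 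Hence $R_n$ descends to an analytic function on $\T_n$.

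Step (iv) is the main obstacle. To promote $R_n$ from an analytic function on the torus to a Laurent polynomial in the $z_j=e^{\ii t_j}$, the direct route is to evaluate the iterated lattice sums in $D_n$ by repeated use of the one-dimensional identities $\sum_p(-1)^p/(u+2\pi p)=1/(2\sin(u/2))$ and its principal-value cotangent variant, verifying inductively that every residual $1/\sin$ or $1/\tan$ singularity is killed by a matching factor of $\varpi_n$, leaving a finite trigonometric polynomial. A more conceptual alternative is that $D_n\varpi_n$ extends meromorphically to $(\C^\star)^n$ with poles only along root hyperplanes $z_i=z_j$, exactly where $\varpi_n$ vanishes, so the extension is actually holomorphic of bounded degree, hence a Laurent polynomial. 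Weyl-group invariance is then obtained by reindexing the sum (writing $s_k=\sum_{j\ge k}p_j$ with $s_n=0$) under a permutation of the $t_i$'s, noting that the choice of gauge is immaterial since $n(n-1)$ is even, while reality follows by pairing each $p$ with $-p$. A symmetric real Laurent polynomial in the $z_j$ expands uniquely as a finite linear combination $R_n=\sum_{\kappa\in\CK}r_\kappa\chi_\kappa(T)$ of real irreducible $\SU(n)$ characters, the coefficients $r_\kappa$ being rational because every step of the construction is. For (v), I would let $t\to 0$: the $p=0$ term of $D_n$ is $1/\widetilde\Delta(u)$, and $\varpi_n/\widetilde\Delta(u)=\prod_{i<j}\bigl(2\sin((t_i-t_j)/2)/(t_i-t_j)\bigr)\to 1$, while all $p\ne 0$ terms of $D_n$ are regular at $u=0$ and are killed by $\varpi_n\to 0$. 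Hence $R_n(I)=1$.
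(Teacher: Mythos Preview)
Your proof is correct and follows the same approach as the paper: partition $\R^{n-1}$ into translated cubes, compute the phase acquired under $u\to u+2\pi p$ from compatibility, assemble the signed lattice sum $D_n$, and argue that $R_n=D_n\varpi_n$ is a regular, real, symmetric function of $T$ with $R_n(I)=1$ from the $t\to 0$ limit. You are in fact more explicit than the paper on the periodicity check (iii) and on flagging (iv) as the substantive step --- the paper simply asserts that a singularity-free function of $T$ ``must be a polynomial in $T$ and $T^*$'' and defers the actual justification (and the rationality of the $r_\kappa$) to the iterative partial-fraction procedure of sec.~\ref{Pexpressions}, which is exactly your ``direct route''; one minor slip in your (ii): infinitely many terms of the lattice sum, not one, have a pole on a given hyperplane $u_i+\cdots+u_{j-1}\in 2\pi\Z$, but each contributes only simply (one factor blows up), so $D_n$ still has at most a simple pole there, matched by the simple zero of $\varpi_n$.
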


Consider now the similar computation,  again for a compatible triple $(\lambda,\mu;\nu)$ 
 but with the $A_j$'s standing for the expressions of (\ref{Aj})
  computed at {\it unshifted weights}, \ie with $\alpha=\ell(\lambda)$ and likewise for $\beta$ and $\gamma$. 
 If the triple $(\alpha,\beta;\gamma)$ is non generic, $\CJ_n(\alpha,\beta;\gamma)=0$. If it is generic,
  and $n$ is odd, $(\alpha,\beta;\gamma)$ may be thought of as associated with the shift of the 
  compatible triple $(\lambda-\rho,\mu-\rho\,;\nu-\rho)$. Thus for $n$ odd, this new calculation yields 
  the same result as above.
  For $n$ even, however,   the latter triple is no longer compatible and
  a separate calculation has to be carried out. It is easy to see that the same line of reasoning leads to a 
  modification of the formula (\ref{shiftDelta}) and to a new family of real symmetric polynomials $ {\widehat R}_n(T)$,
  according to

\bea \nonumber
 && \int_{\R^{n-1} }  \frac{\prod_{j=1}^{n-1} d u_j  e^{\ii u_j A_j}}{\widetilde \Delta(u)}
=  \int_{(-\pi,\pi)^{n-1} }  \prod_{j=1}^{n-1} d u_j  e^{\ii u_j A_j} {\widehat D}_n
\\ \label{defhatD}
{\widehat D}_n &:=& \sum_{p_1, \cdots, p_{n-1} =-\infty}^\infty \, 
\prod_{1\le i < i'\le n} \,  \frac{1}{u_i+u_{i+1}+\cdots +u_{i'-1}+(p_i+\cdots +p_{i'-1})(2\pi)}\qquad
\\ \label{shiftDeltaev}   {\widehat R}_n(T)&:=& {\widehat D}_n \varpi_n\,,
  \eea
 with the same $\varpi_n$ as in (\ref{defpin}).
Note that the sum in (\ref{defhatD}) is convergent for $n>2$. The case $n=2$ requires a 
 special treatment, see below in sec. \ref{R2-R3}. 
 
  \begin{proposition}\label{propcompev}{The integrals over $\R^{n-1}$ appearing in $\CJ_n(\alpha,\beta;\gamma)$ in (\ref{In}), 
  for $\alpha=\ell(\lambda),\, \beta=\ell(\mu),\, \gamma=\ell(\nu)$,  $(\lambda,\mu;\nu)$ a compatible triple, may be
 compactified in the form
 \be\label{compact}\int_{\R^{n-1} }  \frac{\prod_{j=1}^{n-1} d u_j  e^{\ii u_j A_j}}{\widetilde \Delta(u)}
 ={ \ii^{n(n-1)/2}}  \int_{(-\pi,\pi)^{n-1} }  \prod_{j=1}^{n-1} d u_j \,  e^{\ii u_j A_j} \, \dfrac{{\widehat R}_n(T)}{\Delta(e^{\ii t_i})} \ee
 where the real polynomial ${\widehat R}_n(T)$ is defined through  (\ref{shiftDeltaev}). 
There exists a finite $n$-dependent set $\widehat{\CK}$ of highest weights such that ${\widehat R}_n(T)$ may be written as  
 a  linear combination
 ${\widehat R}_n(T)=\sum_{\kappa \in \widehat{\CK}}
 \hat r_\kappa \chi_\kappa(T)$ of real characters. The coefficients $\hat r_\kappa$ are rational and such that, 
when evaluated at the identity matrix, $\widehat{R}_n(I)=1$.
 For $n$ odd, 
the following objects coincide with those of Proposition~\ref{propcomp}: $\widehat{R}_n\equiv R_n$,  
$\widehat{\CK}=\CK$ and $r_\kappa=\hat r_\kappa$. } \end{proposition}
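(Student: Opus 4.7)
The plan is to mirror the proof of Proposition \ref{propcomp}, tracking precisely the two modifications that occur when one passes from shifted weights $\alpha'=\ell(\lambda+\rho)$ to unshifted ones $\alpha=\ell(\lambda)$: the arithmetic type of the exponents $A_j$, and the resulting phase picked up under $2\pi$-translations of the integration variables $u_j$.

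Step one is the compactification (\ref{compact}). For a compatible triple $(\lambda,\mu;\nu)$, the sum $\sum_{k=1}^n(\alpha_k+\beta_k-\gamma_k)$ with $\alpha=\ell(\lambda)$, $\beta=\ell(\mu)$, $\gamma=\ell(\nu)$ equals $\sum_{k=1}^{n}k(\lambda_k+\mu_k-\nu_k)$, which vanishes identically by (\ref{adm}). Consequently the compensating second term in (\ref{Aj}) disappears and $A_j=\sum_{k=1}^j(\alpha_{P(k)}+\beta_{P'(k)}-\gamma_{P''(k)})\in\mathbb{Z}$. The translations $u_j\to u_j+2\pi p_j$ then leave $e^{\ii\sum_j u_jA_j}$ invariant with no sign whatsoever, and folding $\mathbb{R}^{n-1}$ into $(-\pi,\pi)^{n-1}$ yields exactly the representation (\ref{defhatD}) \emph{without} the $(-1)^{(n-1)\sum_j jp_j}$ factor that appeared in (\ref{defDn}). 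This already implies the last sentence of the proposition: when $n$ is odd, $n-1$ is even so the sign in (\ref{defDn}) is already trivial, hence $\widehat D_n\equiv D_n$, $\widehat R_n\equiv R_n$, $\widehat{\CK}=\CK$ and $\hat r_\kappa=r_\kappa$.

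Step two promotes $\widehat R_n(T):=\widehat D_n\,\varpi_n$ to a regular symmetric real Laurent polynomial in the $e^{\ii t_j}$. The partial-fraction rearrangement is the same as in Proposition \ref{propcomp}: each single-sum $\sum_{m\in\mathbb{Z}}(u_i+\cdots+u_{i'-1}+2\pi m)^{-1}$ reproduces (as a Cauchy principal value) $\bigl[2\tan(\oh(u_i+\cdots+u_{i'-1}))\bigr]^{-1}$, whose simple poles on the lattice are cancelled by the matching factor $2\sin(\oh(u_i+\cdots+u_{i'-1}))$ in $\varpi_n$, see (\ref{defpin}). Symmetry under $S_n$ follows from the manifest invariance of $\widehat D_n$ under joint permutations of the $(u_i,p_i)$, and invariance under $t\mapsto -t$ yields reality. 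Any such symmetric Laurent polynomial expands as a finite $\mathbb{Q}$-linear combination of $\SU(n)$ characters indexed by a finite set $\widehat{\CK}$ whose weights are bounded by the total degree of $\varpi_n$. Finally, the normalization $\widehat R_n(I)=1$ is obtained by letting $t\to 0$ in (\ref{shiftDeltaev}): only the $p=0$ term in $\widehat D_n$ survives unregularized, reconstructing $\widetilde\Delta(u)^{-1}$, whose product with $\varpi_n$ tends to $1$ as $u\to 0$, while the terms with $p\neq 0$ contribute zero in the limit.

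The main technical point that will require genuine care is the convergence of $\widehat D_n$ itself. Because the oscillating signs present in $D_n$ are absent here, one must verify absolute (or at least conditional) summability of the $(n-1)$-fold sum over $p$ of the product of $\tfrac12 n(n-1)$ Cauchy-type factors. For $n\geq 3$ there are at least three such factors with genuinely independent $p$-directions, giving enough multi-directional power decay for absolute convergence, and one may legitimately interchange summation and partial-fraction resummation. For $n=2$, however, the sum degenerates to $\sum_p(u+2\pi p)^{-1}$, which is only conditionally convergent; exactly as in the one-dimensional model of sec.\,\ref{polRn} (the ``$A$ integer'' case), one must then define $\widehat D_2$ as a Cauchy principal value, recovering $[2\tan(u/2)]^{-1}$. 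This is the unique delicate point; everything else follows by straightforward adaptation of the proof of Proposition \ref{propcomp}.
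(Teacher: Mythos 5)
Your proof is correct and follows essentially the same route as the paper: compatibility forces $\sum_k(\alpha_k+\beta_k-\gamma_k)=0$, hence integer exponents $A_j$, exact $2\pi$-periodicity of $e^{\ii\sum_j u_jA_j}$, and the signless sum $\widehat D_n$ of (\ref{defhatD}); the remaining steps (regularity of $\widehat D_n\,\varpi_n$, expansion on real characters, $\widehat R_n(I)=1$, and the convergence caveat at $n=2$) mirror Proposition~\ref{propcomp} exactly as the paper intends. Your direct observation that the sign $(-1)^{(n-1)\sum_j jp_j}$ in (\ref{defDn}) is already trivial for $n$ odd is a slightly cleaner way to obtain $\widehat R_n\equiv R_n$ than the paper's reinterpretation of $(\alpha,\beta;\gamma)$ as the shift of the triple $(\lambda-\rho,\mu-\rho;\nu-\rho)$, but the two arguments are equivalent.
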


 A method of calculation and explicit expressions for low values of $n$
 of the polynomials $R_n$,  ${\widehat R}_n$ and of the sets $\CK$, $\widehat{\CK}$
 will be given in sections \ref{Pexpressions}  and \ref{Rnlown}, establishing the rationality of the coefficients $r_\kappa, \hat r_\kappa$.
We shall see that the polynomial $R_n$ is equal to $1$ for $n=2$ and $n=3$, but  non-trivial when $n \geq 4$.
In contrast, already for $n=2$, $\widehat{R}_2(T)=\oh \chi_{1}(T)$.
These expressions of $R_n$ and $\widehat{R}_n$ for low $n$ suggest the following conjecture

\begin{conjecture}\label{conj2}{
The coefficients $r_\kappa$  and $\hat r_\kappa$ are non negative.} \end{conjecture}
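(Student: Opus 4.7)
The plan is to derive closed-form trigonometric expressions for $R_n(T)$ and $\widehat{R}_n(T)$ that make character positivity manifest. By the orthonormality of irreducible $\SU(n)$-characters with respect to the Haar measure \eqref{HaarTn} on $\T_n$, one has
\begin{equation*}
r_\kappa \;=\; \int_{\T_n} R_n(T)\, \overline{\chi_\kappa(T)}\, dT, \qquad
\hat r_\kappa \;=\; \int_{\T_n} \widehat{R}_n(T)\, \overline{\chi_\kappa(T)}\, dT,
\end{equation*}
so the claim reduces to showing that these integrals are non-negative. To evaluate $R_n$ and $\widehat{R}_n$ explicitly I would substitute the series \eqref{defhatD} (and its signed analog for $D_n$) and repeatedly use the Dirac-comb identities $\mathrm{P.V.}\sum_{p\in\Z}(u+2\pi p)^{-1} = \tfrac12\cot(u/2)$ and $\sum_{p\in\Z}(-1)^p(u+2\pi p)^{-1}=(2\sin(u/2))^{-1}$, performing the summations in a symmetric regularization. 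Multiplication by the sine product $\varpi_n$ of \eqref{defpin} then cancels all singularities by construction and yields a trigonometric polynomial in the variables $e^{\ii t_j}$.

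The key step is to recognize the resulting expression, up to the positive rational prefactor fixed by $R_n(I)=\widehat{R}_n(I)=1$, as the character of a representation whose decomposition is manifestly non-negative. The natural candidate is a product of the form $\prod_{\alpha>0}(e^{\ii\alpha(t)/2}\pm e^{-\ii\alpha(t)/2})$ over positive roots of $\mathfrak{su}(n)$, which after a Weyl-vector shift becomes the classical ``spin-like'' expression $e^{\ii\rho}\prod_{\alpha>0}(1+e^{-\ii\alpha})$; this is known to decompose into $\SU(n)$-irreducibles with non-negative integer multiplicities (via Kostant's multiplicity formula, or combinatorially via the exterior algebra on $\mathfrak{g}/\mathfrak{h}$ indexed by subsets of positive roots). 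For $n$ odd the signs in $D_n$ all reduce to $+1$ and one expects the direct identification $R_n=\widehat{R}_n$ with the plus-sign product; for $n$ even the factors $(-1)^{\sum j p_j}$ in $D_n$ should twist this product by a Weyl-sign character in a way that still preserves positivity after combination with $\varpi_n$.

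The main obstacle is the conditionally convergent nature of $D_n$ and $\widehat{D}_n$: a naive iterated evaluation already in the $n=3$ case produces asymmetric answers depending on the order in which the $p_i$ are summed, so a careful symmetric regularization (for example, damping by $|p|^{-\varepsilon}$ and sending $\varepsilon\to 0^+$, or analytic continuation in an auxiliary spectral parameter) must be built in before the product formula can be extracted. A partial fallback is to combine Theorem \ref{CI-LR}, which expresses $\CJ_n$ as a sum $\sum_{\kappa\in\CK} r_\kappa N_{\lambda\mu\kappa}^\nu$, with the positivity of $\CJ_n$ on the interior of the Horn polytope established in Lemma \ref{conj1}; this yields only the positivity of the total linear combination and not of each individual $r_\kappa$, so to deduce $r_\kappa\ge 0$ for every $\kappa$ one would also need to exhibit, for each $\kappa$, a sequence of admissible triples $(\lambda,\mu;\nu)$ on which only the $\kappa$-term survives in the sum, which seems unlikely to be feasible uniformly in $\kappa$ without the explicit product identity of the second step.
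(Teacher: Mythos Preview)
The statement you are attempting to prove is a \emph{conjecture} in the paper, not a theorem: the authors do not claim a proof and provide none. Their only evidence is the explicit computation of $R_n$ and $\widehat{R}_n$ for $n\le 6$ in sec.\,\ref{Rnlown}, where positivity of every $r_\kappa,\hat r_\kappa$ is read off by inspection, together with the consistency check in sec.\,\ref{conseqTh1}\,(v) relating the conjecture to Lemma~\ref{conj1}. So there is no ``paper's own proof'' to compare your attempt to.

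As for your proposal itself: the overall architecture (compute $R_n$, recognize it as the character of an honest representation) is reasonable in spirit, and you are candid that it is a strategy rather than a proof. But the concrete candidate you put forward, namely a normalized product $\prod_{\alpha>0}(e^{\ii\alpha(t)/2}\pm e^{-\ii\alpha(t)/2})$ or equivalently $e^{\ii\rho}\prod_{\alpha>0}(1+e^{-\ii\alpha})$, is already ruled out by the paper's explicit formulae. For $\SU(4)$ the product over the six positive roots has value $2^6=64$ at $T=I$, whereas $R_4=\tfrac{1}{24}(9+\chi_{(1,0,1)})$; the ratio $64/24$ is not integral, so $24\,R_4$ cannot be that product. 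For $\SU(5)$, $R_5=\tfrac{1}{360}(45+10\,\chi_{(1,0,0,1)}+\chi_{(0,1,1,0)})$ involves only three irreducibles, far fewer than the exterior algebra $\Lambda^*(\mathfrak n_-)$ (dimension $2^{10}$) decomposes into. The $\widehat R_6$ of \eqref{Rhat6}, with its coefficients $5422,\,1,\,13,\,186,\,982$, likewise bears no resemblance to a positive-root product. So the ``key step'' as you state it fails, and your proposal does not supply an alternative mechanism for positivity.

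Your fallback via Theorem~\ref{CI-LR} and Lemma~\ref{conj1} is exactly the observation the paper makes in sec.\,\ref{conseqTh1}\,(v), and as both you and the authors note, it gives only positivity of certain linear combinations of the $r_\kappa$, not of each $r_\kappa$ separately. The conjecture remains open.
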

As we shall see below in sec.\,\ref{conseqTh1} (v), this Conjecture \ref{conj2} is related to Lemma \ref{conj1}.\\
\bigskip

\subsection{Relation between $\CJ_n$ and LR coefficients}
 We may now complete the computation of $\CJ_n(\alpha',\beta';\gamma')$ and $\CJ_n(\alpha,\beta;\gamma)$. We  rewrite
  $$ \inv{\Delta(e^{\ii t})} = |\Delta(e^{\ii t})|^2 \inv{\Delta(e^{\ii t}) \Delta(e^{\ii t}) \Delta(e^{\ii t})^*}\,,$$ 
   the first term $|\Delta(e^{\ii t})|^2$ 
  is what is needed for 
    writing the normalized Haar measure over the $\SU(n)$ Cartan torus $\T_n$, see (\ref{HaarTn}), 
 while the three Vandermonde determinants in the denominator provide the desired denominators of Weyl's character formula.

 Putting everything together  we find
 \begin{theorem}\label{CI-LR}
1. For a compatible  triple
   $(\lambda,\mu;\nu)$, the integral $\CJ_n$ of (\ref{pSUn}-\ref{In}), evaluated for the shifted 
  weights $\lambda+\rho$ etc, or for the corresponding $\alpha'=\ell(\lambda+\rho),\,\beta' = \ell(\mu+\rho),\,
  \gamma' = \ell(\nu+\rho)$, 
   may be recast as  
 \be\label{CI-LR0} 
 \CJ_n(\alpha',\beta'; \gamma')=\int_{ \T_n} dT\, \chi_\lambda(T) \chi_\mu(T) \chi_\nu^*(T) \,R_n(T) \ee
  where the integration is carried out on the Cartan torus with its  normalized Haar measure.
Writing $R_n(T)=\sum_{\kappa \in\CK}
 r_\kappa \chi_\kappa(T)$ as in Prop. \ref{propcomp}, this may be rewritten as 
 \bea\nonumber  \label{CI-LR1}  \CJ_n(\alpha',\beta'; \gamma') &=& {\sum_{\kappa\in \CK \atop \nu'} r_\kappa
 N_{\lambda\mu}^{\nu'}    N_{\kappa\nu}^{\nu'} }
 \\ \label{CI-LR1} &=&\sum_{\nu'} \hat c^{(\nu)} N_{\lambda\mu}^{\nu'}\eea
 where the sum runs over the finite set of  irreps $\nu'$ obtained in the decomposition of $\oplus_{\kappa\in \CK}
  (\nu\otimes  \kappa)$, with rational coefficients 
   $c_{\nu'}^{(\nu)} =  \sum_{\kappa\in\CK} N_{\kappa\nu}^{\nu'} \, r_\kappa$.\\
2. For a compatible  triple
   $(\lambda,\mu;\nu)$ of weights not on the boundary of the Weyl chamber, 
   the integral $\CJ_n$ of (\ref{pSUn}-\ref{In}), evaluated for the unshifted 
  weights $\lambda,\, \mu,\, \nu$, or for the corresponding $\alpha=\ell(\lambda),\,\beta = \ell(\mu),\,
  \gamma = \ell(\nu)$, 
   may be recast as  
 \be\label{CI-LR0ev} \CJ_n(\alpha,\beta; \gamma) 
 =\int_{ \T_n} dT\, \chi_{\lambda-\rho}(T) \chi_{\mu-\rho}(T) \chi_{\nu-\rho}^*(T) \,{\widehat R}_n(T) \ee 
  where the integration is carried out on the Cartan torus with its  normalized Haar measure.
Writing ${\widehat R}_n(T)=\sum_{\kappa \in \widehat{\CK}}
 \hat r_\kappa \chi_\kappa(T)$ as in Prop. \ref{propcompev}, this may be rewritten as 
 \bea \label{CI-LR1ev0}    \CJ_n(\alpha,\beta; \gamma)  &=& {\sum_{\kappa\in\widehat{\CK}\atop \nu'}\hat r_\kappa
  N_{\lambda-\rho\,\mu-\rho}^{\nu'} N_{\kappa\, \nu-\rho}^{\nu'}} \\
   \label{CI-LR1ev}  
&=& \sum_{\nu'} \hat c_{\nu'}^{(\nu)} N_{\lambda-\rho\,\mu-\rho}^{\nu'}\eea
 where the sum runs over the finite set of  irreps $\nu'$ obtained in the decomposition of $\oplus_{\kappa\in \widehat{\CK}}
 \big( (\nu-\rho) \otimes  \kappa\big)$, with rational coefficients $\hat c_{\nu'}^{(\nu)}
   = \sum_{\kappa\in\widehat{\CK}} N_{\kappa\, \nu-\rho}^{\nu'} \, \hat r_\kappa$.
\end{theorem}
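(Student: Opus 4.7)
The strategy is to interpret, after compactification via Proposition~\ref{propcomp} or \ref{propcompev}, the integrand of (\ref{In}) as a product of $\SU(n)$ characters integrated against the normalized Haar measure on the Cartan torus $\T_n$, then to expand $R_n$ or $\widehat R_n$ in the character basis and invoke orthonormality together with the LR integral formula (\ref{LRintSUn}).

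For Part 1, fix a compatible triple $(\lambda,\mu;\nu)$ and consider $\CJ_n$ at the shifted weights $\alpha'=\ell(\lambda+\rho)$, etc. The triple sum over $P,P',P''\in S_n$ in (\ref{In}), weighted by signatures and multiplied by $\prod_j e^{\ii u_j A_j(P,P',P'')}$, reassembles the three determinants of (\ref{detSUn}): two of the form $\det e^{\ii t_i \alpha'_j}$ and $\det e^{\ii t_i \beta'_j}$ from $\alpha',\beta'$, and a conjugated one $\det e^{-\ii t_i \gamma'_j}$ arising from the factor $\CH(\gamma',\ii x)^*$ in (\ref{genform}). Weyl's character formula (\ref{char1}) identifies these with $\chi_\lambda(T)\Delta(e^{\ii t})$, $\chi_\mu(T)\Delta(e^{\ii t})$ and $\chi_\nu^*(T)\Delta(e^{\ii t})^*$, respectively. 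Applying Proposition~\ref{propcomp} then converts the $\R^{n-1}$ integral into a compact one over $(-\pi,\pi)^{n-1}$ carrying an insertion $R_n(T)/\Delta(e^{\ii t})$, and the three determinants together with this factor telescope to $\chi_\lambda\chi_\mu\chi_\nu^*\,|\Delta(e^{\ii t})|^2\,R_n(T)$, using $\Delta\cdot\Delta^* = |\Delta|^2$.

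What remains is a bookkeeping check that the prefactor $\ii^{-n(n-1)/2}/(2^{n-1}n!\pi^{n-1})$ from (\ref{In}), the $\ii^{n(n-1)/2}$ produced by (\ref{compact}), and the Jacobian of the change of variables between the $u_j$ and the torus coordinates $t_j$ assemble precisely the normalized Haar measure $dT$ of (\ref{HaarTn}). Granted this, one has (\ref{CI-LR0}). Expanding $R_n=\sum_{\kappa\in\CK}r_\kappa\chi_\kappa$, using $\chi_\lambda\chi_\mu=\sum_{\nu'} N_{\lambda\mu}^{\nu'}\chi_{\nu'}$ together with orthogonality of characters, gives $\int_{\T_n}\chi_{\nu'}\chi_\nu^*\chi_\kappa\,dT = N_{\kappa\nu'}^\nu$. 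The adjunction identity $N_{\kappa\nu'}^\nu = N_{\bar\kappa\nu}^{\nu'}$ together with the reality of $R_n$, which forces $r_\kappa = r_{\bar\kappa}$, then symmetrizes the result into the form (\ref{CI-LR1}).

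Part 2 follows the identical pattern with unshifted $\alpha=\ell(\lambda)$, etc. When $\lambda,\mu,\nu$ lie strictly inside the Weyl chamber, the weights $\lambda-\rho,\mu-\rho,\nu-\rho$ are dominant, so Weyl's formula gives $\det e^{\ii t_i \alpha_j} = \chi_{\lambda-\rho}(T)\,\Delta(e^{\ii t})$. Replacing Proposition~\ref{propcomp} by Proposition~\ref{propcompev} (with $\widehat R_n$ in place of $R_n$) and repeating the above reasoning produces (\ref{CI-LR0ev}) and (\ref{CI-LR1ev0})--(\ref{CI-LR1ev}). The main obstacle I anticipate is not conceptual but rather the careful matching of numerical factors ($\ii$ powers, $2\pi$s, factorials, and the Jacobian between $u$- and $t$-coordinates) that must collapse exactly to the Haar-measure normalization (\ref{HaarTn}); a miscount in this step would spoil the cleanliness of the identity.
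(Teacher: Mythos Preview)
Your proposal is correct and follows essentially the same route as the paper: compactify via Proposition~\ref{propcomp} (resp.~\ref{propcompev}), recognize the three determinants as Weyl numerators, absorb one $\Delta(e^{\ii t})$ together with the prefactors into the Haar measure (\ref{HaarTn}), and then expand $R_n$ (resp.~$\widehat R_n$) in characters. The only cosmetic difference is in the final bookkeeping: the paper decomposes $R_n(T)\chi_\nu(T)=\sum_{\nu'} c_{\nu'}^{(\nu)}\chi_{\nu'}(T)$ first and then integrates against $\chi_\lambda\chi_\mu$, which yields $\sum_{\nu'} c_{\nu'}^{(\nu)} N_{\lambda\mu}^{\nu'}$ directly without the adjunction/reality detour you take; your route via $N_{\kappa\nu'}^{\nu}=N_{\bar\kappa\nu}^{\nu'}$ and $r_\kappa=r_{\bar\kappa}$ is equally valid and in fact makes the use of reality of $R_n$ more explicit.
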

\begin{proof}
(\ref{CI-LR0}) and (\ref{CI-LR0ev}) result from the previous discussion. 
The product $R_n(T) \chi_\nu(T)$ may then be decomposed on characters, 
$$ R_n(T) \chi_\nu(T) =\sum_{\kappa\in\CK} r_\kappa \chi_\kappa(T) \chi_\nu(T)=\sum_{\kappa\in\CK\atop \nu'} N_{\kappa\nu}^{\nu'}\, r_\kappa \chi_{\nu'}(T)=
\sum_{\nu'} c_{\nu'}^{(\nu)}\chi_{\nu'}(T)\,,$$ with $c_{\nu'}^{(\nu)} =  \sum_{\kappa\in\CK} N_{\kappa\nu}^{\nu'} \, r_\kappa$, 
which yields (\ref{CI-LR1}). Similarly, $\widehat{R}_n \chi_{\nu-\rho}=\sum_{\nu'}\hat c_{\nu'}^{(\nu)}\chi_{\nu'}$ with 
$\hat c_{\nu'}^{(\nu)} =  \sum_{\kappa\in\widehat{\CK}} N_{\kappa\,\nu-\rho}^{\nu'} \, \hat r_\kappa$, which gives  (\ref{CI-LR1ev}).
Recall that if either of $\lambda,\mu$ or $\nu$ lies on the boundary of the Weyl chamber, $\alpha$, $\beta$ or $\gamma$ 
has at least two equal components and $\CJ_n(\alpha,\beta; \gamma)=0$.
\end{proof}
Thus, in words, $\CJ_n(\alpha',\beta'; \gamma')$ and $\CJ_n(\alpha,\beta; \gamma)$
may be expressed as  linear combinations of LR coefficients
over ``neighboring" weights $\nu'$ of $\nu$. If Conjecture \ref{conj2} is right, the coefficients $c_{\nu'}^{(\nu)},\, \hat  c_{\nu'}^{(\nu)}$ are also non negative. \\
 {\bf Remark.} Note that even though the function $\CJ_n(\alpha,\beta;\gamma)$ is defined for any 
triple $(\alpha,\beta;\gamma)$, compatible or not, integral or not, equations (\ref{CI-LR1}),(\ref{CI-LR1ev}) hold only for  
triples  $(\alpha',\beta';\gamma')$ or $(\alpha,\beta;\gamma)$ associated with {\it compatible} triples  $(\lambda,\mu;\nu)$.
Recall also
from the previous discussion that for $n$ even, the triple $(\alpha',\beta';\gamma')$ is not {\it integral and compatible} if the triple $(\alpha,\beta;\gamma)$ (or $(\lambda,\mu;\nu)$) is.
\\
 {\bf Comment.} It would be interesting  to invert relations (\ref{CI-LR1},\ref{CI-LR1ev}) and to express
 the LR coefficients $N_{\lambda\mu}^\nu$ as linear combinations of the functions $\CJ_n$ and their derivatives.
 In view of the considerations of \cite{Vergne}, this doesn't seem inconceivable\footnote{Our thanks to 
 Mich\`ele Vergne for pointing to that possibility.}.


{\subsection{Expression of the $R$ and $\widehat{R}$ polynomials}
  \label{Pexpressions}
Here is the essence of the method used to compute $R_n$  and $\widehat{R}_n$, as defined through
(\ref{shiftDelta}), (\ref{shiftDeltaev}).\\
We first introduce two families of functions, defined recursively
$$
f(u,m) = - \frac{1}{m-1} \frac{\partial}{\partial v} f(v,m-1) \vert_{v=u} \quad \text{and}  \quad g(u,m)=  -  \frac{1}{m-1}  \frac{\partial}{\partial v} g(v,m-1) \vert_{v=u}
$$
with (see above  the beginning of sec.\,\ref{polRn}) 
$$
f(u,1)= 
2 u \sum _{m=1}^{\infty } \frac{1}{u^2-(2\pi)^2 m^2}+\frac{1}{u}
=\inv{2\tan(u/2)} 
\quad \text{and} \quad
g(u,1)=
2 u \sum _{m=1}^{\infty } \frac{(-1)^m}{u^2-(2\pi )^2 m^2}+\frac{1}{u}
= \inv{2\sin(u/2)}\,.
$$
$R_n$  and $\widehat{R}_n$ , defined in (\ref{shiftDelta},\ref{shiftDeltaev}), are obtained explicitly by an iterative procedure.
We start from
$${1}/{\widetilde\Delta(u)}= 
\prod_{1\le i < j\le n} \frac{1}{(u_i+\cdots+ u_{j-1})}$$
First we pick a variable in $(\widetilde\Delta(u))^{-1}$, say $u_1$, shift it by $p_1(2\pi)$,
 perform a partial fraction expansion of 
 the rational function $\prod_{2\le j\le n}\inv {u_1+\cdots +u_{j-1}+ p_1 (2\pi)}$ with respect to the variable $u_1$ 
 and make use of the previous identities in the summation over $p_1$. 
 This produces a sum of trigonometric functions of $u_1,\cdots, u_{n-1}$
which are $(2\pi)$ periodic or anti-periodic in each of these variables,  times
rational functions of $u_2,\cdots,u_{n-1}$.
Then iterate with the variable $u_2$, say, shifting it by $p_2 (2\pi)$ etc.
(Of course the order of the variables is immaterial.)
As explained in sec.\,\ref{polRn}, the final result has the general form
$$ \frac{R_n\quad (\mathrm{resp.}\,  \widehat{R}_n) }{ \prod_{1\le i < i'\le n} 2 \sin(\oh(u_i+u_{i+1}+\cdots +u_{i'-1}))} $$
where $R_n$, resp. $\widehat{R}$,  is a (complicated) trigonometric function  of the $u$ variables, 
or alternatively a symmetric trigonometric function of the $t$ variables.
  The latter  is then  recast as a sum of real characters of the matrix $T$. \\
This procedure will be illustrated in sec.\,\ref{Rnlown} on the first cases, for $2\le n\le 6$. }

{\bf Remark}. The reader may have noticed the parallel between this way of computing 
$\widehat{R}_n$
and the computation of $\CJ_n$ in \cite{JB-I}: both rely on an iterative partial fraction expansion, the connection 
between the two being the Poisson formula. As a consequence of this simple correspondence, 
 $\CJ_n(\alpha,\beta;\gamma)$ evaluated for a compatible triple
 and $\widehat{R}_n$ have rational coefficients  with the same least common denominator $\delta_n$,
see below Prop. \ref{quIn}. 
   
\subsection{Consequences of Theorem 1}
\label{conseqTh1}
{\noindent(i) We start with a useful lemma
\begin{lemma}\label{usefullemma}{With the notations of Theorem \ref{CI-LR}, we have the relations
\bea\label{lemma1}\sum_{\kappa\in \CK} r_\kappa \dim V_\kappa &=&1\\
\label{lemma1e}  \sum_{\kappa\in \widehat{\CK}} \hat r_\kappa \dim V_\kappa &=&1\\
\label{lemma2} \sum_{\nu, \nu'}  N_{\lambda\mu}^{\ \nu'} c_{\nu'}^{(\nu)}  \dim V_\nu &=&
\dim V_\lambda \, \dim V_\mu \\
\label{lemma2e}  \sum_{\nu, \nu'}  N_{\lambda-\rho\,\mu-\rho}^{\ \nu'}\, \hat c_{\nu'}^{(\nu)}  \dim V_{\nu-\rho} &=&
\dim V_{\lambda-\rho} \, \dim V_{\mu-\rho} 
\,.\eea 
}\end{lemma}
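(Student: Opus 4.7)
The plan is to reduce the last two identities to the first two using only standard character computations and the Littlewood--Richardson symmetries; all four relations turn out to be one-line consequences of facts already established.

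First I would dispatch (\ref{lemma1}) and (\ref{lemma1e}) by a direct evaluation at $T=I$. Propositions~\ref{propcomp} and~\ref{propcompev} already assert that $R_n(I)=1$ and $\widehat{R}_n(I)=1$, and $\chi_\kappa(I)=\dim V_\kappa$, so expanding $R_n(T)=\sum_{\kappa\in\CK} r_\kappa\,\chi_\kappa(T)$ at $T=I$ yields $\sum_\kappa r_\kappa\dim V_\kappa=1$, and likewise for $\widehat{R}_n$.

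Next, for (\ref{lemma2}) I would substitute the definition $c_{\nu'}^{(\nu)}=\sum_{\kappa\in\CK} N_{\kappa\nu}^{\nu'}\,r_\kappa$ into the left-hand side and interchange the (finite) summations, reducing the task to computing $\sum_\nu N_{\kappa\nu}^{\nu'}\dim V_\nu$ for each fixed $\kappa\in\CK$. Using the standard Littlewood--Richardson symmetry $N_{\kappa\nu}^{\nu'}=N_{\kappa\,\nu'^{*}}^{\nu^{*}}$ (an immediate consequence of the cyclic-invariance of $(V_\kappa\otimes V_\nu\otimes V_{\nu'}^{*})^G$), together with $\dim V_\nu=\dim V_{\nu^{*}}$, this inner sum equals the dimension of $V_\kappa\otimes V_{\nu'^{*}}$, namely $\dim V_\kappa\,\dim V_{\nu'}$. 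Plugging in and invoking (\ref{lemma1}) collapses $\sum_\nu c_{\nu'}^{(\nu)}\dim V_\nu$ to $\dim V_{\nu'}$. The outer sum over $\nu'$ is then closed by the familiar character identity $\dim V_\lambda\,\dim V_\mu=\sum_{\nu'} N_{\lambda\mu}^{\nu'}\dim V_{\nu'}$, which is just $\chi_\lambda\chi_\mu=\sum_{\nu'}N_{\lambda\mu}^{\nu'}\chi_{\nu'}$ evaluated at $T=I$.

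Identity (\ref{lemma2e}) is handled by the verbatim argument with $(\lambda,\mu,\nu)$ replaced by $(\lambda-\rho,\mu-\rho,\nu-\rho)$ throughout and (\ref{lemma1}) replaced by (\ref{lemma1e}); the hypothesis of Theorem~\ref{CI-LR}(2) already ensures that these shifted weights lie in the interior of the dominant Weyl chamber, so the LR coefficients $N_{\lambda-\rho,\,\mu-\rho}^{\nu'}$ and $N_{\kappa,\,\nu-\rho}^{\nu'}$ are well defined. The only point requiring care --- and which I expect to be the main obstacle, though a mild one --- is making sure the interchange of sums is justified: since $\CK$ and $\widehat\CK$ are finite and both $N_{\lambda\mu}^{\nu'}$ and $N_{\kappa\nu}^{\nu'}$ vanish outside a finite set of $\nu'$ (and of $\nu$, once $\nu'$ is fixed), all sums in sight are finite and the manipulation is unambiguous.
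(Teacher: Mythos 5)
Your proposal is correct and follows essentially the same route as the paper: evaluation of $R_n$ and $\widehat R_n$ at $T=I$ for the first two identities, then interchange of the finite sums and reduction of the inner sum $\sum_\nu N_{\kappa\nu}^{\nu'}\dim V_\nu$ to $\dim V_\kappa\,\dim V_{\nu'}$, followed by (\ref{lemma1}) and $\sum_{\nu'}N_{\lambda\mu}^{\nu'}\dim V_{\nu'}=\dim V_\lambda\dim V_\mu$. The only (immaterial) difference is that you justify the inner-sum computation via the conjugation symmetry $N_{\kappa\nu}^{\nu'}=N_{\kappa\,\nu'^{*}}^{\nu^{*}}$, whereas the paper invokes the reality of the representations $\kappa$ to write $N_{\kappa\nu}^{\nu'}=N_{\kappa\nu'}^{\nu}$.
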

\begin{proof}
From the relation $R_n(T) = \sum_{\kappa\in \CK} r_\kappa \chi_\kappa(T)$ evaluated at $T=I$, with $R_n(I)=1$, it follows that  $\sum_{\kappa\in\CK} r_\kappa \dim V_\kappa=1$. 
Then 
$$c_{\nu'}^{(\nu)} =\sum_{\kappa\in \CK} N_{\kappa \nu}^{\nu'} r_\kappa= \sum_{\kappa\in \CK} N_{\kappa \nu'}^{\nu} r_\kappa$$
 because of the reality of the irreps of h.w. $\kappa$, hence
\bea\nonumber 
\sum_{\nu, \nu'}  N_{\lambda\mu}^{\ \nu'} c_{\nu'}^{(\nu)}  \dim V_\nu &=&\sum_{\kappa\in \CK} r_\kappa \sum_{\nu'} N_{\lambda\mu}^{\ \nu'}
(\sum_\nu N_{\kappa \nu'}^{\nu} \dim V_\nu) \\ \nonumber
&=&\sum_{\kappa\in \CK} r_\kappa \sum_{\nu'} N_{\lambda\mu}^{\ \nu'} \dim V_{\nu'}\dim V_\kappa
\\ \nonumber
&=& \underbrace{\sum_{\kappa\in \CK} r_\kappa \dim V_\kappa}_{=1} \ \sum_{\nu'} N_{\lambda\mu}^{\ \nu'} \dim V_{\nu'}
=\dim V_\lambda \, \dim V_\mu \,.\eea
The two relations (\ref{lemma1e}) and (\ref{lemma2e}) are proved in the same way.
\end{proof}

\noindent (ii) Localization of the normalization integral of $\CJ_n$.\\
 For two given integral (non negative)  $\alpha$ and $\beta$, 
consider the sum of $\CJ_n(\alpha,\beta; \gamma) \Delta(\gamma)$  over the integral $\gamma$'s inside the 
connected part ${\bH}_{\alpha\beta}$ of the support of $\CJ_n$. 
If either $\alpha$ or $\beta$ is non generic, (\ie has two equal components), 
all $\CJ_n(\alpha,\beta; \gamma) $ vanish. 

Conversely if both $\alpha$ and $\beta$ are
generic,  \ie $\lambda$ and $\mu$ are not on the boundary of the Weyl chamber, 
we make use of 
{   (\ref{dimVla}) and (\ref{CI-LR1ev})     %
\bea\label{above} \!\!\!\!\!\!\!\!\!\!
\sum_{\gamma} \CJ_n(\alpha,\beta; \gamma)   
\frac{\Delta(\gamma)}{\Delta(\alpha)\Delta(\beta)} \Sf(n-1)
&=& 
\sum_{\gamma} \CJ_n(\alpha,\beta; \gamma) 
\frac{\dim V_{\nu-\rho}}{\dim V_{\lambda-\rho}\, \dim V_{\mu-\rho}}   \nonumber \\
{} &=& 
\sum_{\nu, \nu'}  N_{\lambda-\rho\,\mu-\rho}^{\ \nu'} \hat c_{\nu'}^{(\nu)}  \frac{\dim V_{\nu-\rho}}{\dim V_{\lambda-\rho}\, \dim V_{\mu-\rho}} = 1 \eea
by Lemma \ref{usefullemma}. (The $\nu$'s on the boundary of the Weyl chamber, for which $\nu-\rho$ is 
not dominant, do not contribute because of the vanishing of $\CJ_n(\alpha,\beta;\gamma)$.)  
Comparing with (\ref{normIn}), we find that 
\be\label{normInev}
\int_{\bH_{\alpha\beta}}d^{n-1}\gamma\,
 \,\CJ_n(\alpha,\beta;\gamma)  \, \frac{ \Delta(\gamma)}{\Delta(\alpha)\Delta(\beta)} 
=\sum_{\gamma \in \bH_{\alpha\beta} \cap \Z^{n-1} } {\CJ_n(\alpha,\beta; \gamma)} \frac{\Delta(\gamma)}{\Delta(\alpha)\Delta(\beta)} 
= \inv{\Sf(n-1)} \,.
\ee
In others words, the normalization integral of $\CJ_n$ over the sector $\gamma_{n-1}\le \cdots\le \gamma_1$
{\it localizes} over the integral points of that sector.}\\[1pt]

\noindent (iii) Quantization of $\CJ_n$. 
\begin{proposition}\label{quIn}{For any integral compatible triple $(\alpha,\beta;\gamma)$,
$\CJ_n(\alpha,\beta; \gamma)$ is an integral multiple of some rational number  $\delta_n^{-1}$.
}\end{proposition}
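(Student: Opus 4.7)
The plan is to read the quantization statement directly off Theorem \ref{CI-LR}, once a suitable $\delta_n$ has been fixed, with the degenerate cases handled separately by the antisymmetry of $\CJ_n$.

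\medskip

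\noindent\textbf{Choice of $\delta_n$.} By Proposition \ref{propcompev}, $\widehat{R}_n(T) = \sum_{\kappa\in\widehat{\CK}} \hat r_\kappa \chi_\kappa(T)$ is a finite sum with rational coefficients $\hat r_\kappa$, determined entirely by $n$. Define $\delta_n$ to be the least common multiple of the denominators of the $\hat r_\kappa$, so that $\delta_n \hat r_\kappa \in \Z$ for every $\kappa\in\widehat{\CK}$, and $\delta_n$ is a positive integer depending only on $n$.

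\medskip

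\noindent\textbf{Generic case.} Suppose $\alpha$, $\beta$ and $\gamma$ all have pairwise distinct components. Since they are integral and ordered as in (\ref{orderalpha}), the inversion $\lambda_i := \alpha_i - \alpha_{i+1}$ (and analogously for $\mu$, $\nu$) yields U$(n)$ highest weights with every Dynkin label $\ge 1$; in particular $(\lambda,\mu;\nu)$ lies strictly inside the Weyl chamber, and the integral compatibility of $(\alpha,\beta;\gamma)$ (i.e.~(\ref{equtr})) is exactly Definition \ref{defcomp} for $(\lambda,\mu;\nu)$. Theorem \ref{CI-LR}, part 2, then applies and gives
\[
\CJ_n(\alpha,\beta;\gamma) \;=\; \sum_{\kappa\in\widehat{\CK},\,\nu'} \hat r_\kappa\, N^{\nu'}_{\lambda-\rho,\mu-\rho}\, N^{\nu'}_{\kappa,\,\nu-\rho}\,.
\]
The shifted weights $\lambda-\rho,\mu-\rho,\nu-\rho$ are (weakly) dominant because all Dynkin labels of $\lambda,\mu,\nu$ are $\ge 1$, so the LR multiplicities are the usual non-negative integers, the sum over $\nu'$ is finite, and multiplying by $\delta_n$ produces an integer:
\[
\delta_n\,\CJ_n(\alpha,\beta;\gamma) \;=\; \sum_{\kappa,\,\nu'} (\delta_n \hat r_\kappa)\, N^{\nu'}_{\lambda-\rho,\mu-\rho}\, N^{\nu'}_{\kappa,\,\nu-\rho}\;\in\;\Z.
\]

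\medskip

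\noindent\textbf{Degenerate case and conclusion.} If at least two components of $\alpha$, $\beta$ or $\gamma$ coincide, then $\CJ_n(\alpha,\beta;\gamma)=0$: the integrand of (\ref{In}) is manifestly antisymmetric under the $S_n$-action on each of the three arguments (this is property (i) of Section \ref{discIn}, and the argument used for $\alpha$, $\beta$ in property (vii) applies identically to $\gamma$). In this case the claim holds trivially. Combining the two cases, $\CJ_n(\alpha,\beta;\gamma)\in \delta_n^{-1}\Z$ for every integral compatible triple.

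\medskip

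The main (mild) point to watch out for is the degenerate case: Theorem \ref{CI-LR} is stated only for weights off the walls of the Weyl chamber, so one cannot blindly apply its character formula when $\alpha$, $\beta$ or $\gamma$ has repeated entries; the antisymmetry collapse $\CJ_n=0$ is what rescues the argument in that regime. Everything else is bookkeeping: the definition of $\delta_n$ depends only on $n$ through the intrinsic polynomial $\widehat R_n$ supplied by Proposition \ref{propcompev}.
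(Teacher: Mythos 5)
Your proof is correct and follows essentially the same route as the paper: the paper also reads the quantization off equation (\ref{CI-LR1ev}), taking $\delta_n$ to be the least common denominator of the coefficients $\hat c_{\nu'}^{(\nu)}$ (which, since $\hat c_{\nu'}^{(\nu)}=\sum_\kappa N_{\kappa\,\nu-\rho}^{\nu'}\hat r_\kappa$ with integer multiplicities, amounts to the same thing as your choice based on the $\hat r_\kappa$). Your explicit handling of the degenerate case via the vanishing of $\CJ_n$ is a welcome precision that the paper leaves implicit.
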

\begin{proof} Call $\delta_n$ the  least common denominator of the coefficients $\hat c_{\nu'}^{(\nu)} $ in  (\ref{CI-LR1ev}).
Then we see that $\CJ_n(\alpha,\beta; \gamma)$ is an integral multiple of $1 /\delta_n$.   \end{proof}
Unfortunately we have no general
expression of $\delta_n$ and rely on explicit calculations for low values of $n$:

\begin{center}
$
\begin{array}{r||c|c|c|c|c|c}
n & 2 &  3 &4 &5& 6 & \cdots\\
\hline
\delta_n &1 & 1 & 6  & 360 & 9! &\\
\hline
\end{array}
$
\end{center}

\bigskip

\noindent (iv) Asymptotic behavior. 
The asymptotic regime is read off (\ref{CI-LR0}-\ref{CI-LR1ev}): heuristically, we expect that asymptotically, 
for rescaled weights, the  $t$-integral in the 
computation of $\CJ_n$ will be dominated by $t\approx 0$, 
hence $T\approx I$, for which $R_n=\widehat{R}_n=1$, 
whence  the asymptotic equality, 
for $\lambda,\mu,\nu$ large
\be\label{asympLR}  \CJ_n(\alpha',\beta'; \gamma')  \approx \CJ_n(\alpha,\beta; \gamma) 
\approx N_{\lambda\mu}^{\ \nu}\,.\ee
More precisely,   it is known \cite{Rass}  that, as a function of $\nu'$,  $N_{\lambda\mu}^{\ \nu'}$ 
can be extended to a continuous piecewise polynomial function,
 thus for large $\nu$, one approximates the rhs of (\ref{CI-LR1}) by
 $N_{\lambda\mu}^{\ \nu} \sum_{\nu'} c_{\nu'}^{(\nu)} \approx N_{\lambda\mu}^{\ \nu}$ since the coefficients 
 sum up to 1, again as a consequence of $R_n(I)=1$:
 $$\sum_{\nu'} c_{\nu'}^{(\nu)} =\sum_{\kappa\in \CK} r_\kappa \sum_{\nu'} N_{\kappa \nu}^{\nu'}\buildrel 
 \mathrm{large} \ \nu \over{\approx}\sum_{\kappa\in \CK} r_\kappa  \dim V_\kappa =1$$
 as observed above in (\ref{lemma1}).\\
 We shall see below in sec.\,\ref{LREpols} that (\ref{CI-LR0},\ref{CI-LR1}) enable us to go (a bit) beyond this
  leading asymptotic 
 behavior.

 \bigskip

\noindent (v) Compare Conjecture \ref{conj2} and Lemma \ref{conj1}.\\
We just observe here that Conjecture \ref{conj2} is consistent with 
Lemma  \ref{conj1}. 
 Indeed, if we apply (\ref{CI-LR1}) to an {\it admissible} (hence compatible) triple 
$(\lambda,\mu;\nu)$,  with the assumption that the sum over $\nu'$ includes $\nu$
with a non vanishing coefficient $c_\nu^{(\nu)}$, 
 and using  the non negativity of the other $c_{\nu'}^{(\nu)} $ 
(as stated in Conj. \ref{conj2}), one obtains
$\CJ_n(\alpha',\beta';\gamma') \ge N_{\lambda\mu}^\nu >0$, in agreement with Lemma \ref{conj1}.

\section{On polytopes and polynomials}
\label{LREpols}
The polytopes $\bH_{\alpha\beta}$  and $\CH_{\lambda\mu}^\nu$ considered in this section have been introduced in sec.\,\ref{polytopes}.

\subsection{Ehrhart polynomials}
\label{Epols}
Given some rational polytope ${\CP}$, call $s{\CP}$ the $s$-fold dilation of ${\CP}$, \ie the polytope obtained by scaling by a factor $s$ the vertex coordinates (corners) of ${\CP}$ in a basis of the underlying lattice. 
The number of lattice points contained in the polytope $s{\CP}$ is given by   a quasi-polynomial called the Ehrhart quasi-polynomial
of ${\CP}$, see for example \cite{Stanley}.
It is polynomial for integral polytopes but one can also find examples of rational non-integral polytopes, for which 
it is nevertheless a genuine polynomial. 
We remind the reader that the first two coefficients (of highest degree) of the Ehrhart polynomial 
of a polytope ${\CP}$ of dimension $d$ are given, up to simple normalizing constant factors, by the $d$-volume of ${\CP}$ 
and by the $(d-1)$-volume of the union of its facets; 
the coefficients of smaller degree are usually not simply related to the volumes of the faces of higher co-dimension.
We finally mention the Ehrhart--Macdonald reciprocity theorem: the number of interior points of ${\CP}$, of dimension $d$, is given, up to the sign $(-1)^d$, by the evaluation of the Ehrhart polynomial at the negative value $s=-1$ of the scaling parameter.

\subsection{Littlewood-Richardson polynomials}
\label{LRpols}

It is well known \cite{Heck82, GLS} that multiplicities like the LR coefficients admit a semi-classical 
description for ``large" representations. In the present context,
there is an {\it asymptotic equality} of  the LR multiplicity 
$N_{\lambda\mu}^\nu$, when the weights  $\lambda,\mu,\nu$ are rescaled by a common 
large integer $s$,  with the function $\CJ_n$.  
Here again we assume that the admissible triple $(\lambda,\mu;\nu)$ is generic,  in the sense of 
Definition \ref{defgen}.
Indeed, from (\ref{asympLR}), as $s\to \infty$ 
\be\label{asymp}
N_{s\lambda\, s\mu}^{s\nu}
 \approx \CJ_n(\ell(s\lambda+\rho), \ell(s\mu+\rho); \ell(s\nu+\rho))
 \approx  \ \CJ_n(s\alpha,s\beta; s\gamma) =
s^{(n-1)(n-2)/2} \,\CJ_n(\alpha,\beta; \gamma)
\,.\ee
The last equality just expresses the homogeneity of the function $\CJ_n$.

These scaled or ``stretched" LR coefficients have been proved to be polynomial (``Littlewood-Richardson polynomials'') in the stretching parameter  $s$ \cite{DW,Rass}, 
\be\label{stretch} N_{s\lambda\, s\mu}^{s\nu} =P_{\lambda\mu}^\nu (s)\ee  and it has been conjectured that the polynomial $P_{\lambda\mu}^\nu (s)$ (of degree at most $(n-1)(n-2)/2$ by 
(\ref{asymp})), has non negative rational coefficients  \cite{KTT04}. More properties of $P_{\lambda\mu}^\nu (s)$,
namely their possible factorization  and bounds on their degree have been discussed in \cite{KTT}.
For a generic triple, our study leads to an explicit value (eq.~(\ref{asymp})) for the coefficient of highest degree,  namely the kernel function $\CJ_n(\alpha,\beta;\gamma)$, see eq.~(\ref{In}).

{}From the very definition of the hive polytope  $\CH_{\lambda\,\mu}^\nu$  associated with an admissible triple (each integral point of  
which is a honeycomb contributing to the multiplicity), with  Littlewood-Richardson, or stretching, polynomial $P_{\lambda\mu}^\nu (s)$,  and from the general definition of the Ehrhart polynomial, it is clear that both polynomials are equal.
Notice that $P_{\lambda\mu}^\nu (s)$, defined as the Littlewood-Richardson polynomial of the triple $(\lambda,\mu;\nu)$ or as the Ehrhart polynomial of the polytope $\CH_{\lambda\,\mu}^\nu$, 
is  polynomial even if the hive polytope happens not to be an integral polytope;  on the other hand the Ehrhart polynomial of the polytope defined as the convex hull of the integral points 
of $\CH_{\lambda\,\mu}^\nu$ will differ from $P_{\lambda\mu}^\nu (s)$ if $\CH_{\lambda\,\mu}^\nu$ is not integral, see
two examples in sec.\,\ref{exampleSU5} and \ref{exampleSU6}.

{}From the volume interpretation of the first Ehrhart coefficient, which was recalled in sec.\,\ref{Epols}, we find:
 \begin{proposition} \label{Inasavol}{For $\SU(n)$, the normalized $d$-volume
  $\CV$ of the hive polytope $\CH_{\lambda\mu}^\nu$ equals 
 $ d! \;\CJ_n(\alpha,\beta;\gamma)$, with $d=(n-1)(n-2)/2$,  for a generic and admissible triple $(\lambda,\mu;\nu)$,  with $\alpha = \ell(\lambda)$, $\beta = \ell(\mu)$, $\gamma = \ell(\nu)$, and with $\CJ_n(\alpha,\beta; \gamma)$ given by eq.~(\ref{In}).}
 \end{proposition}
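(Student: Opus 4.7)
The plan is to compare the leading coefficient of the stretching polynomial $P_{\lambda\mu}^\nu(s)=N_{s\lambda\,s\mu}^{s\nu}$ computed in two independent ways. On the combinatorial side, sec.\,\ref{LRpols} has already identified $P_{\lambda\mu}^\nu(s)$ with the Ehrhart polynomial of the hive polytope $\CH_{\lambda\mu}^\nu$. Genericity of the admissible triple, together with Lemma~\ref{conj1} and the fibration argument preceding it, guarantees that $\CH_{\lambda\mu}^\nu$ is of full dimension $d=(n-1)(n-2)/2$, so by the standard theory recalled in sec.\,\ref{Epols} the leading coefficient of its Ehrhart polynomial equals the $d$-volume of $\CH_{\lambda\mu}^\nu$ relative to the lattice in which honeycombs are parametrized. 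With the normalization convention $\CV=d!\cdot(\text{relative }d\text{-volume})$, it suffices to prove that the leading coefficient of $P_{\lambda\mu}^\nu(s)$ equals $\CJ_n(\alpha,\beta;\gamma)$.

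For this second computation, I would apply Theorem~\ref{CI-LR}, eq.~(\ref{CI-LR1}), to the rescaled triple $(s\lambda,s\mu;s\nu)$ in place of $(\lambda,\mu;\nu)$. The set $\CK$ of highest weights is fixed and finite, so each index $\nu'$ that appears in (\ref{CI-LR1}) is of the form $s\nu+\delta$ with $\delta$ belonging to a fixed bounded set of weights, and the coefficients $c_{\nu'}^{(\nu)}$ are independent of $s$. For $s$ large enough all these $(s\lambda,s\mu;s\nu+\delta)$ lie in a single chamber of piecewise polynomiality of the LR coefficients (\cite{Rass}), so each $N_{s\lambda\,s\mu}^{s\nu+\delta}$ becomes a polynomial in $s$ of degree at most $d$ whose leading coefficient equals that of $P_{\lambda\mu}^\nu(s)$. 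On the left side of (\ref{CI-LR1}) we have $\CJ_n(s\alpha',s\beta';s\gamma')=s^d\CJ_n(\alpha,\beta;\gamma)+O(s^{d-1})$ by the homogeneity property (ii) from sec.\,\ref{discIn}, together with $\alpha'=s\alpha+O(1)$. Comparing the $s^d$ coefficients and using $\sum_{\nu'}c_{\nu'}^{(\nu)}\to R_n(I)=1$, which is the content of (\ref{lemma1}) in Lemma~\ref{usefullemma}, extracts exactly the identity (leading coefficient of $P_{\lambda\mu}^\nu)=\CJ_n(\alpha,\beta;\gamma)$. Combining the two computations yields $\CV=d!\,\CJ_n(\alpha,\beta;\gamma)$.

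The main obstacle is the passage from the heuristic asymptotic (\ref{asympLR})--(\ref{asymp}) to the exact statement that the leading coefficient of the polynomial $P_{\lambda\mu}^\nu(s)$ is $\CJ_n(\alpha,\beta;\gamma)$. The subtlety is that the Theorem~\ref{CI-LR} decomposition introduces shifts $\delta$ whose effect on the polynomial has to be argued to be subleading; this requires either the piecewise polynomiality result of \cite{Rass} to collapse all shifted multiplicities onto the same leading term of $P_{\lambda\mu}^\nu$, or an alternative appeal to Heckman's asymptotic theorem \cite{Heck82} applied directly to the orbital integral (\ref{genform}). A minor additional check is that the normalization convention for $\CV$ (namely $d!$ times the Euclidean $d$-volume relative to the honeycomb lattice) matches the leading-coefficient convention for the Ehrhart polynomial; this is standard but should be verified once the coordinates on the hive polytope are fixed.
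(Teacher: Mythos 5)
Your proposal is correct and follows essentially the same route as the paper: identify the stretching polynomial $P_{\lambda\mu}^\nu(s)$ with the Ehrhart polynomial of $\CH_{\lambda\mu}^\nu$, read the volume off its leading coefficient, and identify that coefficient with $\CJ_n(\alpha,\beta;\gamma)$ via Theorem~\ref{CI-LR}, the homogeneity of $\CJ_n$, and the normalization $\sum_\kappa r_\kappa \dim V_\kappa=1$. Your treatment of the shifts $\nu'=s\nu+\delta$ through the piecewise polynomiality of \cite{Rass} is in fact a more explicit version of the asymptotic argument the paper gives in sec.\,\ref{conseqTh1}\,(iv) and sec.\,\ref{subleadingterms}.
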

{We use here the definition given by \cite{HaaseEtAl, Magma}: for a polytope of dimension $d$, the Euclidean volume $v$  is related to the normalized volume  $\CV$ by $v=\CV/d!$. More generally the total normalized $p$-volume $\CV_p$ of the $p$-dimensional faces of a polytope is related to its total Euclidean $p$-volume $v_p$ by $v_p=\CV_p/p!$. }

This is  consistent with the result\ \cite{KT99}  that the LR coefficient is equal to the number of integral points in the  hive polytope. 
In words, (\ref{asymp}) says that the number  of integral points  of that polytope  is asymptotically well approximated by its euclidean volume $\CJ_n$.\\

{The Blichfeldt inequality \cite{Blichfeldt}  valid for an integral polytope  ${\mathcal Q}$ of dimension $d$, states that its number of integral points is smaller than  $\CV + d$, where $\CV$ is its normalized volume.
This property, which a fortiori holds for a rational polytope  ${\mathcal H}$ with integral part ${\mathcal Q}$, 
together with Proposition 4, 
implies the following inequality for a generic hive polytope $\CH_{\lambda\,\mu}^\nu$ of $\SU(n)$: 
\be\label{blichfeldt} 
d! \;\CJ_n(\alpha,\beta;\gamma)  \ge N_{\lambda\mu}^\nu-d
\ee
 with $d=(n-1)(n-2)/2$ and  $\alpha=\ell(\lambda)$, $\beta=\ell(\mu)$, $\gamma=\ell(\nu)$.}

\subsection{Polytopes versus symplectic quotients}
\label{SympQuo}
 
Here is another argument relating the volume of the hive polytope with $p(\gamma|\alpha, \beta)$, hence also with $\CJ_n(\alpha,\beta;\gamma)$, 
for $\alpha = \ell(\lambda)$, $\beta = \ell(\mu)$, $\gamma = \ell(\nu)$, $\lambda, \mu, \nu$
being dominant integral weights.
It goes in two steps, as follows.\\
{\it Step 1.}\\
 $N_{\lambda \mu}^{\nu}$ is the number of integral points of the hive polytope.\\
For large $s$, the coefficient $N_{s\lambda\, s\mu}^{s\nu}$ is approximated by $s^d$ times the volume of the same polytope.\\
{\it Step 2.}\\
For large $s$, $N_{s\lambda\, s\mu}^{s\nu}$ is approximated\footnote{More precisely $\lim_{s \rightarrow \infty} \frac{1}{s^d} N_{s \lambda\, s\mu}^{s \nu} 
= \int \omega^d/d!$, with $d=(n-1)(n-2)/2$, where $\omega$ is the symplectic 2-form on the symplectic and K\"ahler manifold of complex dimension $d$ defined as
$({\mathcal O}_\lambda \times {\mathcal O}_\mu \times {\mathcal O}_{\overline{\nu}}) // \SU(n) {\, := \, } m^{-1}(0)/ \SU(n)$, with $m$, the moment map $m: (a_1, a_2, a_3) \in {\mathcal O}_\lambda \times {\mathcal O}_\mu \times {\mathcal O}_{\overline{\nu}} \mapsto a_1+a_2+a_3 \in Lie(\SU(n))^*.$ } 
by the volume of a symplectic quotient of the product of three coadjoint orbits labelled by $\lambda, \mu, \overline{\nu}$, where $\overline{\nu}$ is the conjugate of $\nu$.
\\The same volume is given, up to known constants,  by $p(\gamma \vert \alpha, \beta)$, hence by $\CJ_n(\alpha,\beta;\gamma)$, see \cite{KT00}, Th4.\\
Hence the result.

As already commented in \cite{KT00},  the equality between the two volumes is quite indirect and it would be nice to construct a measure preserving map between the hive polytope 
and the above symplectic quotient, or a variant thereof. To our  knowledge, this is still an open problem.

The details of the first part of step 2 are worked out in \cite{SuTa}. 
We should mention that this last reference also adresses the problem of calculating  the function $p(\gamma|\alpha, \beta)$, at least when the arguments are determined by dominant integral weights, and the authors present quite general formulae that are similar to ours. However, they do not use the  explicit writing of the orbital measures using
formula (\ref{HCIZ}),
which was a crucial ingredient of our approach and allowed us to obtain rather simple expressions for $\CJ_n(\alpha,\beta;\gamma)$.

\subsection{Subleading term}
\label{subleadingterms}
\medskip
From the asymptotic behavior (\ref{asymp}), we have 
\be  N_{s\lambda\,s \mu}^{s \nu} =  \nonumber P_{\lambda\mu}^\nu(s)=
s^{(n-1)(n-2)/2} \CJ_n(\ell(\lambda),\ell(\mu);\ell(\nu) ) (1+O(s^{-1}))\ee
provided the leading coefficient $\CJ_n(\ell(\lambda),\ell(\mu); \ell(\nu))$ does not vanish.
According to Lemma \ref{conj1}
 the stretching polynomial $P_{\lambda\mu}^\nu(s)$ is  
of degree $(n-1)(n-2)/2$ for $\nu$ inside the tensor polytope and for $\lambda,\mu \notin \partial C$, but is 
of lower degree on the boundary of that polytope, or for $\lambda$ or $\mu$ on $\partial C$.

 Write (\ref{CI-LR1}) for stretched weights
\[  \CJ_n(\ell(s \lambda+\rho),\ell(s \mu+\rho); \ell(s\nu+\rho) ) = 
\sum_{\kappa\in \CK} r_\kappa \sum_{\nu'}N_{s \nu\, \kappa}^{\nu'} N_{s\lambda\, s\mu}^{\nu'}\, .\]
 For $s$ large enough, all the weights $\nu'= s\nu+ k$, where $k$ runs over 
the multiset $\{\kappa\}$ of weights (\ie counted with their multiplicity)
of the irrep with highest weight $\kappa$, 
are dominant and thus contribute to the 
multiplicity $N_{s \nu\, \kappa}^{\nu'}$~\cite{Racah-Speiser}.
Thus
\be \label{CJLRas}\CJ_n( \ell(s \lambda+\rho),\ell(s \mu+\rho); \ell(s\nu+\rho) ) = 
\sum_{\kappa\in \CK} r_\kappa \sum_{k \in \{\kappa\} }N_{s\lambda\, s\mu}^{s\nu +k}\,. \ee
But as a function of $\lambda,\mu,\nu$, and in the case of $\SU(n)$, the LR coefficient $N_{\lambda\,\mu}^\nu$ is itself
a piecewise polynomial \cite{Rass}: more precisely in the latter reference it is shown that, for the case of $\SU(n)$, the quasi-polynomials giving the Littlewood-Richardson coefficients in the cones of the Kostant complex are indeed polynomials of total degree at most $(n-1)(n-2)/2$ in the three sets of variables defined as the components of the highest weights $\lambda, \mu, \nu$.\\
{\bf Remark.} {The well known Kostant--Steinberg 
method for the evaluation of the LR coefficients (a method where one performs a Weyl group average over the Kostant function) 
is not used in our paper,  or it is only used as a check. However we should stress that, even in the case of SU(3) where the LR coefficients can be deduced from our kernel function $\CJ_3$, see below sec. \ref{caseSU3}, 
the expressions obtained for $N_{\lambda\mu}^\nu$ using the  Kostant--Steinberg 
method differ from ours.}\\
 If we assume  that $N_{\lambda\,\mu}^\nu$ 
may be extended to a function of the same class as $\CJ_n$, namely $C^{\,n-3}$, see above
sec.\,\ref{discIn}, a Taylor expansion to second order of the rhs of (\ref{CJLRas}) 
is possible for $n\ge 4$. This leaves  out the cases $n=2$ and $n=3$ which may be 
treated independently, see below sec. \ref{caseSU2} and \ref{caseSU3}.
We thus Taylor expand for large $s$
  \bea\nonumber
 \CJ_n( \ell(s \lambda+\rho),\ell(s \mu+\rho);\ell(s\nu+\rho) )& = &
\sum_{\kappa\in \CK} r_\kappa \sum_{k \in \{\kappa\} }P_{\lambda\, \mu}^{\nu +k/s}(s)\\
\nonumber 
&=& 
 \sum_{\kappa\in \CK}  r_\kappa \(  \dim V_\kappa\, P_{\lambda\, \mu}^\nu(s)  + \inv{s}  \sum_{k \in \{\kappa\}} k \nabla_\nu P_{\lambda\, \mu}^\nu(s)
 +\cdots  \)\\ \label{asymp2}
 &=&  P_{\lambda\, \mu}^\nu(s) \(1+  o\Big(\inv{s}\Big) \)
  \eea
since $ \sum_{\kappa\in \CK}  r_\kappa  \dim V_\kappa=1$ as noticed above in sec.\,2.2, and $ \sum_{k \in \{\kappa\}} k =0$ 
in any irrep. 
Thus for generic points, the two  polynomials $\CJ_n( \ell(s \lambda+\rho),\ell(s \mu+\rho); \ell(s\nu+\rho) )$  and 
$P_{\lambda\mu}^\nu(s)$ have the 
same two terms of  highest degree $d_{max}=(n-1)(n-2)/2$ and $d_{max}-1$.
 In the degenerate case where the term of degree $d_{max}$ vanishes and the next does  not,
the leading terms of degree $d_{max}-1$ are equal. 
If the degree is strictly lower than $d_{max}-1$,  there is no obvious relation between the
two polynomials, see examples at the end of sec.\,\ref{stretch2}. 
 
\section{A case by case study for low values of $n$}

We examine in turn the cases $n=2,\cdots, 6$. \\
 
\subsection{Expression and properties of the ${\CJ}_n$ function}
  
The expressions of  ${\CJ}_2,\ {\CJ}_3$ and $\CJ_4$ were already given in \cite{JB-I}. 
  We repeat them below for the reader's convenience. 
  Those of $\CJ_5$ and $\CJ_6$, which   are fairly cumbersome,  are available on 
  the web site  \url{http://www.lpthe.jussieu.fr/~zuber/Z_Unpub.html} 
  
\subsubsection{The case of SU(2)}
 \label{caseSU2}
 
In the case of $n=2$, the function $\CJ_2$ reads
\be\label{CI2} 
\CJ_2( \alpha, \beta; \gamma)= (\bun_I(\gamma_{12}) -\bun_{-I}(\gamma_{12})) \ee
where $\gamma_{12}:=\gamma_1-\gamma_2$ and $\bun_I$ is the characteristic function of the segment
\footnote{This result should be connected with the fact that the  support of the convolution product of  measures on concentric 2-spheres is an annulus. }
$I=(|\alpha_{12}-\beta_{12}|, \alpha_{12}+\beta_{12})$.
Then, when evaluated for shifted weights, $\alpha'=\alpha_{12}+1=\lambda_1+1$, $\beta'=\beta_{12}+1=\mu_1+1$, 
$\gamma'=\gamma_{12}+1=\nu_1+1>0$, 
it takes the value 1 iff 
$|\alpha_{12}-\beta_{12}|< \gamma_{12}+1<  \alpha_{12}+\beta_{12} +2$, \ie
iff $|\alpha_{12}-\beta_{12}|\le \gamma_{12}\le  \alpha_{12}+\beta_{12} $ which is precisely 
 the well known value of the LR coefficient, 
$$ N_{\lambda\mu}^\nu= \begin{cases} 1 & \rm{if}\ |\alpha_{12}-\beta_{12}|=|\lambda_1-\mu_1| \le \gamma_{12}=\nu_1\le 
  \alpha_{12}+\beta_{12} =\lambda_1+\mu_1\ \\ & \  \ \rm{and}\ \nu_1-|\lambda_1-\mu_1| \ \rm{even} \\
0 & \rm{otherwise}\end{cases}\,.$$
We conclude that 
\be\label{I2shift} \CJ_2(\alpha',\beta'; \gamma')= N_{\lambda\mu}^{\nu}\,,\ee
in agreement with the general formula  (\ref{CI-LR1}), provided we assume that the
indicator function  vanishes at the end points of the interval $I$.\\
On the other hand,  as we shall see below in sec. \ref{R2-R3}, $\widehat{R}_2=\oh \chi_1(T)$,
so that   (\ref{CI-LR1ev}) amounts to 
\bea \CJ_2(\alpha,\beta; \gamma)&=& \oh \sum_{\nu'} N_{\lambda_1-1\, \mu_1-1}^{\nu'} N_{\nu_1-1\, 1}^{\nu'}\\
&=&   \begin{cases} 1  & \rm{if}\ |\lambda_1-\mu_1|+2 \le \gamma_{12}=\nu_1\le 
\lambda_1+\mu_1-2\ \\ 
  & \  \ \rm{and}\ \nu_1-|\lambda_1-\mu_1| \ \rm{even} 
 \\
 \oh   & \rm{if}\  \nu_1= |\lambda_1-\mu_1| \ \mathrm{or}\ = \lambda_1+\mu_1 \\
 0 & \rm{otherwise}
\end{cases}
\eea
which is consistent with (\ref{CI2}) if we assume  now that the 
indicator function  takes the value $\oh$ at the end points of the interval $I$. This rather peculiar 
situation is a consequence of the irregular, discontinuous, structure of $\CJ_2$.

\subsubsection{The case of SU(3)}
 \label{caseSU3}
For $n=3$, $\CJ_3$ takes a simple form within the  tensor polytope (here a polygon). In \cite{JB-I}, the following was established.

The function
\be\label{oldI3}  \CJ_3(\alpha,\beta; \gamma) =\inv{4}\sum_{P,P'\in S_3} \varepsilon_{PP'} \,  \epsilon(A_1)(|A_2|-|A_1-A_2|)\,,\ee
with $A_1$ and $A_2$ as in (\ref{Aj}), 
may be recast in a more compact form:
\begin{proposition} 
\label{CI3}Take $\alpha_1\ge\alpha_2\ge \alpha_3$, and likewise for $\beta$. For 
$\gamma$ satisfying  (\ref{equtr}), Horn's inequalities and $\gamma_1\ge \gamma_2\ge \gamma_3$, 
\bea\label{PDF3b} \CJ_3(\alpha,\beta; \gamma) 
 &=&
\inv{6}(\alpha_1-\alpha_3+\beta_1-\beta_3+\gamma_1-\gamma_3) -\oh|\alpha_2+\beta_2-\gamma_2|  -\inv{3}\psi_{\alpha\beta}(\gamma)-\inv{3}\psi_{\beta\alpha}(\gamma)\eea
where 
\be\label{psi} \psi_{\alpha\beta}(\gamma)=
\begin{cases}  (\gamma_2-\alpha_3-\beta_1)-(\gamma_1-\alpha_1-\beta_2)   \quad \mathrm{if}\ \gamma_2-\alpha_3-\beta_1 \ge 0 \ \mathrm{and}\ \gamma_1-\alpha_1-\beta_2<0 \\
 (\gamma_3-\alpha_2-\beta_3)-(\gamma_2-\alpha_3-\beta_1)   \quad \mathrm{if}\ \gamma_3-\alpha_2-\beta_3 \ge 0 \ \mathrm{and}\ \gamma_2-\alpha_3-\beta_1<0 \\
  (\gamma_1-\alpha_1-\beta_2)-(\gamma_3-\alpha_2-\beta_3)   \quad \mathrm{if}\ \gamma_1-\alpha_1-\beta_2 \ge 0 \ \mathrm{and}\ \gamma_3-\alpha_2-\beta_3<0
    \end{cases}\,.
    \ee
    $\CJ_3(\alpha,\beta; \gamma)$ takes non negative values inside the  tensor polygon and vanishes
    by continuity along the edges  of the polygon. It also vanishes whenever two components of $\alpha$
    or $\beta$ coincide (non generic orbits). 
    \end{proposition}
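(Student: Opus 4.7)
The plan is to reduce the explicit double-sum formula (\ref{oldI3}) to the compact piecewise-linear expression (\ref{PDF3b}) by a direct chamber-by-chamber verification, and to read off the qualitative statements (non-negativity, vanishing on the boundary, vanishing on non-generic orbits) from the general properties of $\CJ_n$ already established in sec.~1.3.

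First, I would invoke the gauge (\ref{equtr}) to kill the trace correction in (\ref{Aj}), so that $A_1, A_2$ in (\ref{oldI3}) become the bare sums $A_j = \sum_{k=1}^{j}(\alpha_{P(k)}+\beta_{P'(k)}-\gamma_{k})$, linear in $(\alpha,\beta,\gamma)$ (the ordering of $\gamma$ absorbs the third permutation and contributes the $1/4$ prefactor). This presents $\CJ_3$ as a signed sum of 36 piecewise-linear pieces, each a product of a sign $\epsilon(A_1)$ with $|A_2|-|A_1-A_2|$. Every piece is locally linear on the complement of the hyperplanes $A_1=0$, $A_2=0$, $A_1-A_2=0$ obtained as $P, P'$ vary.

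Next, under the three orderings $\alpha_1\!\ge\!\alpha_2\!\ge\!\alpha_3$, $\beta_1\!\ge\!\beta_2\!\ge\!\beta_3$, $\gamma_1\!\ge\!\gamma_2\!\ge\!\gamma_3$ and Horn's inequalities, most of those sign hyperplanes are either forced (so the corresponding sign is constant on the tensor polygon) or coincide after renaming. The only genuine interior partition hyperplanes turn out to be $\gamma_1-\alpha_1-\beta_2$, $\gamma_2-\alpha_3-\beta_1$, $\gamma_3-\alpha_2-\beta_3$, their $\alpha\!\leftrightarrow\!\beta$ images, and the central wall $\alpha_2+\beta_2-\gamma_2=0$; these are exactly the walls seen on the right-hand side of (\ref{PDF3b}) through $|\alpha_2+\beta_2-\gamma_2|$ and the two $\psi$ functions in (\ref{psi}). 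The triangular decomposition exhibited by the three cases in (\ref{psi}) reflects the partition of the tensor polygon by the three Horn hyperplanes above.

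I would then verify (\ref{PDF3b}) in each chamber. Since both sides are linear there, it suffices to check agreement at one interior reference point per chamber, or equivalently to match the gradients and use continuity (property (iii) of sec.~1.3) to propagate the equality across a single shared face. A convenient reference point is the symmetric one $\alpha_2+\beta_2=\gamma_2$ with $\psi_{\alpha\beta}=\psi_{\beta\alpha}=0$, where only the clean part $\tfrac16(\alpha_1-\alpha_3+\beta_1-\beta_3+\gamma_1-\gamma_3)$ survives; the remaining chambers are obtained by crossing one wall at a time and monitoring how the linear combination of $\epsilon(A_1)$, $\mathrm{sign}(A_2)$ and $\mathrm{sign}(A_1-A_2)$ in (\ref{oldI3}) changes. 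The main obstacle is purely the bookkeeping: grouping the 36 signed terms into pairs that conspire to produce each of the three linear forms inside $\psi_{\alpha\beta}$, and symmetrically for $\psi_{\beta\alpha}$. The appearance of two copies $\psi_{\alpha\beta}$ and $\psi_{\beta\alpha}$ is the price paid for picking out one chamber structure rather than using the manifest $\alpha\leftrightarrow\beta$ symmetry of the integrand.

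Finally, the qualitative claims require no new work. Non-negativity inside the tensor polygon is property (iv) of sec.~1.3 (interpretation as a probability density up to positive factors, combined with Lemma \ref{conj1} for strict positivity in the interior). Continuity on the tensor polygon and vanishing on its edges is properties (iii) and (vi). Vanishing when two components of $\alpha$ or $\beta$ coincide is the antisymmetry property (vii). Thus the proof closes once the piecewise-linear identity (\ref{PDF3b}) has been checked.
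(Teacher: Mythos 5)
Your proposal follows the paper's own route: the paper likewise takes the double-sum formula (\ref{oldI3}) from \cite{JB-I} as the starting point and asserts that it ``may be recast'' as the piecewise-linear expression (\ref{PDF3b})--(\ref{psi}) by precisely the kind of chamber-by-chamber bookkeeping you outline (noting, as you do, that the only walls cutting the interior of the tensor polygon are the seven linear forms visible in the final answer), while the non-negativity and the two vanishing statements are obtained, exactly as in your last paragraph, from the PDF interpretation and from properties (iii), (vi), (vii) of sec.~1.3. One small caution on the verification step: agreement of two affine functions at a single interior point of a chamber does not force their equality on that chamber, so you must rely on your second alternative --- matching gradients and propagating across shared codimension-one faces by continuity from a chamber where equality is established --- or else check $d+1$ affinely independent points per chamber.
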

    
The non-negativity follows from the interpretation of $\CJ_3$ as proportional with a positive coefficient
    to the PDF $p$.

\bigskip
 Consider now an admissible triple  $(\lambda,\mu;\nu)$ of  highest weights of $\SU(3)$.
 The associated triple $(\alpha,\beta;\gamma)$ is defined as explained above, 
 $\alpha_1=\lambda_1+\lambda_2,\ \alpha_2=\lambda_2,\,\beta_1=\mu_1+\mu_2,\,\beta_2=\mu_2,\ \alpha_3=\beta_3=0$, $\gamma_1=\nu_1+\nu_2+\nu_3, \gamma_2=\nu_2+\nu_3$ 
and $\gamma_3=\nu_3=\inv{3}(\lambda_1+2\lambda_2+ \mu_1+2\mu_2-\nu_1-2\nu_2)$, an integer, 
so that $\sum_{i=1}^3 (\gamma_i-\alpha_i-\beta_i) =0$. Then  
\begin{proposition} 
\begin{enumerate} 
\item  For an admissible triple, the function $\CJ_3(\alpha,\beta;\gamma)$ of eq.\,(\ref{PDF3b})  takes only values that are  integral  
and  non negative; as just discussed, 
these values vanish by continuity along the edges of the polygon;  
the vertices of the boundary polygon  are integral  and give admissible $\gamma$'s;
\item for $\alpha=\ell(\lambda),\ \beta=\ell(\mu),\ \gamma=\ell(\nu)$, 
$\CJ_3(\alpha,\beta;\gamma)= N_{\lambda\mu}^\nu-1$; in particular, if some $\lambda_i$ or
$\mu_i$ vanishes, hence $\alpha$ or $\beta$ are non generic, $N_{\lambda\mu}^\nu=1$, a well-known
property of SU(3); 
\item the points $\nu$ of value $\CJ_3(\ell(\lambda),\ell(\mu);\ell(\nu))=m$, for $0\le m<m_{max}$ form a ``matriochka" pattern,   
{see Fig. \ref{polygon9565}}.
\item Now evaluate $\CJ_3$  at shifted weights $\lambda'=\lambda+\rho$, $\mu'=\mu+\rho$, $\rho$ the Weyl vector $(1,1)$, hence
$\alpha'_i= \ell_i(\lambda)+3-i$, $\beta'_i=\ell_i(\mu)+3-i$ and still $\alpha'_3=\beta'_3=0$. Then
\be\label{I3shift} \CJ_3(\alpha',\beta'; \gamma')=N_{\lambda\mu}^{\nu}\ee
 with $\nu$ such that $\gamma'_i=\ell_i(\nu)+3-i$,  $i=1,2,3$. 
 \item The sum $\sum_{ \gamma\in \bH_{\alpha\beta} \cap \Z^2}  \frac{\Delta(\gamma)}{ \Delta(\alpha)\Delta(\beta)} \CJ_3(\alpha,\beta;\gamma)$ equals $\oh$; therefore replacing the sum by an integral over the domain $\gamma_3\le \gamma_2\le \gamma_1$, see (\ref{normIn}), gives the same value (namely $\oh$).
\end{enumerate} 
\end{proposition}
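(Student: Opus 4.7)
The plan is to deduce all five items from results already established in the paper: Theorem \ref{CI-LR}, Proposition \ref{Inasavol}, the continuity/positivity properties listed in sec.\ \ref{discIn}, the normalization identities (\ref{normIn})--(\ref{normInev}), the explicit formula (\ref{PDF3b}) of Proposition \ref{CI3}, and the fact (to be verified in sec.\ \ref{R2-R3}) that $R_3 = \widehat R_3 = 1$. I would handle the items in the order (4), (2), (1), (3), (5).

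Item (4) is immediate from part 1 of Theorem \ref{CI-LR}: with $R_3 \equiv 1$ the set $\CK$ collapses to the trivial weight with $r_0 = 1$, so $c_{\nu'}^{(\nu)} = N_{0\,\nu}^{\nu'} = \delta_{\nu\nu'}$, and (\ref{CI-LR1}) reduces to $\CJ_3(\alpha',\beta';\gamma') = N_{\lambda\mu}^\nu$. For item (2) I would invoke Proposition \ref{Inasavol}. For SU(3) one has $d = (n-1)(n-2)/2 = 1$, so $d! = 1$ and the normalized volume of the hive polytope $\CH_{\lambda\mu}^\nu$ equals its Euclidean length, which in turn equals $\CJ_3(\alpha,\beta;\gamma)$. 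The SU(3) hive polytope is a segment with integer endpoints: this can be read off any one-parameter combinatorial model (BZ triangle, Ocneanu blade, single-parameter honeycomb), whose extremal values are $\Z$-linear expressions in the Dynkin labels of $\lambda,\mu,\nu$. Hence the number of its integer points is $\mathrm{length} + 1 = \CJ_3 + 1$, while by \cite{KT99} this count equals $N_{\lambda\mu}^\nu$, giving the identity $\CJ_3(\alpha,\beta;\gamma) = N_{\lambda\mu}^\nu - 1$ in the generic case. When some Dynkin label of $\lambda$ or $\mu$ vanishes, $\CJ_3 = 0$ by property (vii) of sec.\ \ref{discIn}, while admissibility combined with the classical fact that SU(3) tensor products with one factor on a wall are multiplicity-free forces $N_{\lambda\mu}^\nu = 1$, still consistent with $\CJ_3 = N_{\lambda\mu}^\nu - 1$.

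Item (1) is then an immediate corollary of item (2): integrality and non-negativity at integer admissible triples follow from $\CJ_3 = N_{\lambda\mu}^\nu - 1 \ge 0$; vanishing along the boundary of the tensor polygon follows by continuity (property (vi) of sec.\ \ref{discIn}); and the vertices of the Horn polygon are integral because they are the corners of the convex region cut out by the Horn inequalities with integer coefficients in integer data, as read off from (\ref{psi}). For item (3), formula (\ref{PDF3b}) displays $\CJ_3$ as a linear form in $\gamma$ minus absolute values of linear forms, hence as a concave piecewise-linear function on the tensor polygon; each super-level set $\{\gamma : \CJ_3(\alpha,\beta;\gamma) \ge m\}$ is therefore a convex sub-polygon, and these sub-polygons nest as $m$ decreases from $m_{\max}$ down to $0$, producing the matriochka pattern. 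Item (5) is a direct specialization of (\ref{normIn}) and (\ref{normInev}) at $n = 3$, where $\Sf(2) = 2$, so both the integral and the sum equal $1/2$.

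The step I expect to be the main obstacle is the integrality-of-endpoints claim underpinning item (2). For general $n$ this property can fail (cf.\ the remarks of sec.\ \ref{polytopes} and the counterexamples promised in sec.\ \ref{exampleSU5}), but for $n = 3$ it should reduce to the elementary observation that the single real parameter labelling honeycombs is constrained to an interval with endpoints that are explicit $\Z$-linear functions of the Dynkin labels. Once this is in hand, all five items follow from the established machinery without further computation.
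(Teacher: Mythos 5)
Your proposal is correct, but for items (2) and (3) it takes a genuinely different route from the paper. The paper obtains item (2) by combining item (4) with the SU(3) shift identity $N_{\lambda+\rho\,\mu+\rho}^{\nu+\rho}=N_{\lambda\mu}^\nu+1$ of eq.~(\ref{SU3proper}) (imported from \cite{CZ14}), applied to the triple $(\lambda-\rho,\mu-\rho;\nu-\rho)$; you instead read $\CJ_3$ off Proposition \ref{Inasavol} as the lattice length of the one-dimensional hive polytope and count $N_{\lambda\mu}^\nu=\mathrm{length}+1$ using the integrality of the segment's endpoints. Your route is more self-contained (no external identity needed) and makes the ``$-1$'' transparent as the segment/point-count discrepancy, at the price of having to verify the endpoint integrality --- which for $n=3$ is indeed elementary (it is visible in the $\min-\max$ formula (\ref{PDF3c})), but note that Proposition \ref{Inasavol} is stated only for \emph{generic} triples, so you should add one line for admissible $\nu$ on the boundary of the tensor polygon, where the segment degenerates to a point, $N_{\lambda\mu}^\nu=1$ and $\CJ_3=0$ by continuity. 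For item (3) the paper again cites \cite{CZ14}, whereas you argue directly that $\CJ_3$ is concave on the polygon, the function $\psi_{\alpha\beta}$ of (\ref{psi}) being a maximum of three linear forms; this is a nice, self-contained justification of the matriochka nesting that the paper does not spell out. Items (4) and (5) coincide with the paper's argument ($R_3=1$ in Theorem \ref{CI-LR}, and the localization (\ref{normInev}) with $\Sf(2)=2$); for item (1) you deduce integrality from item (2) while the paper invokes Proposition \ref{quIn} with $\delta_3=1$ --- both work. One small caution: your justification of the integrality of the polygon's vertices (``integer coefficients in integer data'') is not a valid general principle, as the paper's own SU(5) example of a rational non-integral polytope shows; it genuinely requires the SU(3)-specific inspection of the Horn inequalities, which is all the paper itself offers as well.
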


\begin{figure}[!tbp]
  \centering
     \includegraphics[width=16pc]{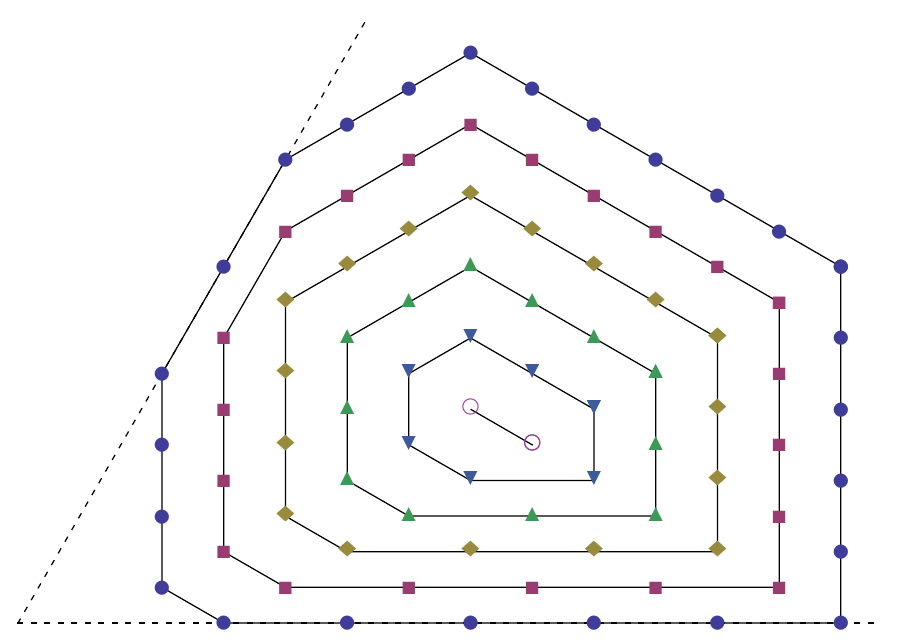}
    \caption{\label{polygon9565}  The Horn-tensor polygon $\bH_{\alpha\beta}={\mathbf H}_{\lambda\mu}$ for 
    the two SU(3) weights $\lambda=(9,5)$  $\mu=(6,5)$, hence $\alpha=(14,5,0),\ \beta=(11,5,0)$. 
     The multiplicity increases from 1 to 6 inside the polygon, giving a matriochka pattern to the successive contours.}
\end{figure}

\begin{proof}
Point 1 follows from  Proposition \ref{quIn}, with $\delta_3=1$.
Integrality of the vertices of the polygon is seen by inspection of Horn's inequalities.
Point 4 follows from  (\ref{CI-LR0}) together with the fact that for $n=3$, the polynomial $R_3=1$, see
below sec.\,\ref{Rnlown}. 
Points 2  follows from (\ref{I3shift}) and  
the observation made in \cite{CZ14} that, {\bf for SU(3)},
\be\label{SU3proper}N_{\lambda+\rho\,\mu+\rho}^{\nu+\rho}= N_{\lambda\,\mu}^\nu +1\,. \ee
The matriochka pattern of point 3 matches the similar 
pattern of points of multiplicity $m+1$ in the tensor product decomposition $\lambda\otimes \mu$ (cf \cite{CZ14},
 eq (22)]).
 Point 5 has already been derived in sec.\,\ref{conseqTh1} and
is here a direct  consequence of 
$\label{dimdim}\sum_\nu N_{\lambda\,\mu}^\nu \dim V_\nu=\dim V_\lambda \dim V_\mu\,.  $
\end{proof}

We want to stress a remarkable consequence of the above eq. (\ref{PDF3b},\ref{psi},\ref{I3shift})
\begin{corollary}
\label{LRSU3} The LR coefficients $N_{\lambda\mu}^\nu$ 
of SU(3) may be expressed as a piecewise linear function 
of the weights $\lambda,\mu,\nu$, sum of the four terms of (\ref{PDF3b}). 
\end{corollary}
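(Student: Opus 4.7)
The plan is to combine Proposition~\ref{CI3} with the identification (\ref{I3shift}) from the preceding proposition. By (\ref{I3shift}), for an admissible triple $(\lambda,\mu;\nu)$ of $\SU(3)$ highest weights, one has
\[
N_{\lambda\mu}^\nu \;=\; \CJ_3(\alpha',\beta';\gamma')
\]
where $\alpha'_i=\ell_i(\lambda)+3-i$ and analogously for $\beta',\gamma'$. Thus it suffices to verify that the right-hand side of (\ref{PDF3b}), regarded as a function of the triple $(\alpha,\beta;\gamma)$, is piecewise linear, and then to substitute the affine expressions of $\alpha',\beta',\gamma'$ in terms of $\lambda,\mu,\nu$.

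The four-term formula (\ref{PDF3b}) is inspected term by term. The first term $\tfrac{1}{6}(\alpha_1-\alpha_3+\beta_1-\beta_3+\gamma_1-\gamma_3)$ is manifestly linear. The second term $-\tfrac{1}{2}|\alpha_2+\beta_2-\gamma_2|$ is piecewise linear, with two linear pieces separated by the hyperplane $\alpha_2+\beta_2-\gamma_2=0$. Finally each of $\psi_{\alpha\beta}$ and $\psi_{\beta\alpha}$, as defined by (\ref{psi}), is linear on each of the three cells cut out by the linear inequalities $\gamma_i-\alpha_j-\beta_k \gtrless 0$ appearing in the definition; the three branches together give a globally continuous piecewise linear function (continuity is part of the content of Proposition~\ref{CI3}). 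A sum of piecewise linear functions is piecewise linear, so $\CJ_3(\alpha,\beta;\gamma)$ itself is piecewise linear on the Horn polytope.

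The substitution $\alpha=\ell(\lambda+\rho)$, $\beta=\ell(\mu+\rho)$, $\gamma=\ell(\nu+\rho)$ is an affine change of variables in $(\lambda,\mu,\nu)$, and piecewise linearity is preserved under affine substitution (the hyperplanes defining the cells are pulled back to hyperplanes, and linear expressions to linear expressions). Hence $N_{\lambda\mu}^\nu$ is exhibited as a piecewise linear function of $(\lambda,\mu,\nu)$ given by the explicit four-term sum obtained from (\ref{PDF3b}) with $\alpha,\beta,\gamma$ replaced by $\alpha',\beta',\gamma'$.

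The only subtle point to guard against is that the substitution $\alpha\to\alpha'$ might push $(\alpha',\beta';\gamma')$ out of the domain of Proposition~\ref{CI3}; but since $(\lambda,\mu;\nu)$ is admissible, $(\alpha',\beta';\gamma')$ lies in the tensor polytope for the shifted weights, where the formula applies. No further work is required, and there is no real obstacle — the corollary is a direct assemblage of results already in place.
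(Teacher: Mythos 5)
Your proposal is correct and follows exactly the route the paper intends: the corollary is stated there as a direct consequence of combining the explicit piecewise-linear formula (\ref{PDF3b})--(\ref{psi}) with the identification (\ref{I3shift}), which is precisely what you assemble, including the (correct) observation that admissibility of $(\lambda,\mu;\nu)$ keeps the shifted triple inside the domain where (\ref{PDF3b}) applies (this is guaranteed by the SU(3) relation $N_{\lambda+\rho\,\mu+\rho}^{\nu+\rho}=N_{\lambda\mu}^\nu+1$ quoted in the paper). Nothing further is needed.
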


To the best of our knowledge, this expression  was never given before.
Note that the lines of non differentiability of the expression 
(\ref{PDF3b}) split the plane into at most 9 domains. In each domain, the function $\CJ_3$ is linear. 
This is to be contrasted with the  known expressions that follow from 
Kostant--Steinberg formula (see for example \cite{FultonHarris}, Prop. 25-29) and which involve a sum over two copies of the $\SU(3)$ Weyl group. \\
We should also recall that there exist yet another formula for the multiplicity $N_{\lambda\mu}^\nu$, stemming from
its interpretation \cite{KT99} as the number of integral solutions to the inequalities on the honeycomb variable,
\bea\label{PDF3c} N_{\lambda\,\mu}^\nu&=& \CJ_3(\alpha',\beta';\gamma')= \min(\alpha'_1,-\beta'_3+\gamma'_2,\alpha'_1+\alpha'_2+\beta'_1-\gamma'_1)\\
&& \nonumber -
\max(\alpha'_2,\gamma'_3-\beta'_3,\gamma'_2-\beta'_2, \alpha'_1+\alpha'_3+\beta'_1-\gamma'_1,
\alpha'_1+\alpha'_2+\beta_2'-\gamma'_1,\alpha'_1-\gamma'_1+\gamma'_2)\\
&=& \nonumber 1+ \min(\lambda_1+\lambda_2, \nu_2+\sigma, \nu_2-\mu_2+2\sigma)\\
&& \nonumber -\max(\lambda_2,\sigma,\nu_2-\mu_2+\sigma, \nu_2-\lambda_2-\mu_2+2\sigma,  \nu_2-\mu_1-\mu_2+2\sigma,\lambda_1+\lambda_2-\nu_1)  \
 \,, \eea
where $\sigma:=\inv{3}(\lambda_1+2\lambda_2+\mu_1+2\mu_2-\nu_1-2\nu_2)$. 
See also \cite{BMW, CZ14} for alternative and more symmetric formulae and \cite{CZ16} for an expression 
in terms of a semi-magic square.

\noindent {\bf Remark}. The  lines or half-lines of non-differentiability of $\CJ_3$, 
as they appear on expression (\ref{PDF3b}), (see also Figures in \cite{JB-I}), are a  {\it subset} of the lines along which 
two arguments of the $\min$ or of the $\max$ functions of (\ref{PDF3c}) coincide.

\subsubsection{The case of SU(4)}
\label{case4}
 The case of SU(4) is more complicated.  
Some known features of SU(3) are no longer true. In particular, 
it is generically not true that multiplicities $N_{\lambda\mu}^\nu$ are equal to 1 on the boundary of the polytope; 
 there is no matriochka pattern, with multiplicities growing as one goes deeper inside the tensor polytope; 
and relation (\ref{SU3proper}) is wrong and meaningless, since $(\lambda+\rho,\mu+\rho;\nu+\rho)$ cannot be 
compatible  if $(\lambda,\mu;\nu)$ is. 

We first recall the expression of $\CJ_4(\alpha,\beta;\gamma)$ given in \cite{JB-I}. With 
$A_j$ standing for $A_j(P,P',P'')$ in the notations of (\ref{Aj}), 
\begin{eqnarray}
\label{CI4}
\!\!\!\!\!\! {\CJ}_4(\alpha,\beta;\gamma ) &=& \inv{2^3 4!} \!\! \sum_ {P, P^\prime, P^{\prime \prime} \in  S_4} \varepsilon_P \varepsilon_{P^\prime} \varepsilon_{ P^{\prime \prime}}\,
 \epsilon(A_1) \(\frac{1}{3!} \epsilon(A_2-A_1)(\vert A_3 - A_1\vert^3 - \vert A_3-A_2+A_1\vert^3 - \vert A_3-A_2 \vert^3 + \vert A_3\vert^3)\right. \cr
{} &{}& \left.-\frac{1}{3}  \epsilon(A_2)(\vert A_3 \vert^3 - \vert A_3 - A_2\vert^3) - \frac{1}{2} (\vert A_2 - A_1\vert - \vert A_2\vert)
(\vert A_3 - A_2\vert (A_3-A_2) + \vert A_3\vert A_3)\)\,.
\end{eqnarray}
One can actually restrict the previous triple sum over the Weyl group to a double sum only  while multiplying the obtained result by $4!$, and this is quite useful for practical calculations.

 Then, we have,   
for an admissible triple $(\lambda,\mu;\nu)$ of h.w. of $\SU(4)$ (with $\lambda,\mu\notin \partial C$,
\ie $\lambda_i, \mu_i\ne 0$), 
 and $\alpha=\ell(\lambda),\, \beta=\ell(\mu),\, \gamma=\ell(\nu)$, \\[-12pt]
\begin{proposition}
\begin{enumerate}
\item   $N_{\lambda\mu}^\nu \ge 4 $ inside the tensor polytope.
\item 
 $\CJ_4(\alpha,\beta;\gamma)$ vanishes when $\gamma$ belongs to the faces of the polytope $\bH_{\alpha\beta}$; 
  conversely  $\CJ_4$ does not  vanish {\it inside} the polytope.
\item At these interior points,   $6 \CJ_4(\alpha,\beta;\gamma)$, which is the normalized 3-volume $\CV$
of the hive polytope $\CH_{\lambda\mu}^\nu$, is an integer. 
\item  That integer satisfies
$\CV=6\CJ_4(\alpha,\beta;\gamma)\ge N_{\lambda\mu}^\nu -3$. \item 
The sum $\sum_{\gamma\in \bH_{\alpha\beta} \cap \Z^3 }  \frac{\Delta(\gamma)}{ \Delta(\alpha)\Delta(\beta)} \CJ_4(\alpha,\beta;\gamma)$  
equals $\inv{12}$, which matches the normalization (\ref{normIn}). 
\end{enumerate}
\end{proposition}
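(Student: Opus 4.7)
The plan is to derive each of the five claims by specialising general results already established in the paper, the only new ingredient being the explicit form of $\widehat R_4$ (to be worked out in section~\ref{Rnlown}).

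For (ii), the vanishing of $\CJ_4$ on the boundary of $\bH_{\alpha\beta}$ is property (vi) of section~\ref{discIn}, valid by continuity for every $n\ge 3$, while strict positivity in the interior is Lemma~\ref{conj1}. For (iii), Proposition~\ref{Inasavol} specialised to $n=4$, $d=(n-1)(n-2)/2=3$, gives $\CV=3!\,\CJ_4=6\,\CJ_4$; integrality of $6\,\CJ_4$ then follows from Proposition~\ref{quIn} together with the tabulated value $\delta_4=6$, which is itself a direct consequence of the explicit expression of $\widehat R_4$ obtained in section~\ref{Rnlown}. Point (iv) is Blichfeldt's inequality~(\ref{blichfeldt}) applied with $d=3$. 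For (v), the general normalization~(\ref{normIn}) yields $1/\Sf(3)=1/12$ for the integral, and the localization identity~(\ref{normInev}) (a consequence of Theorem~\ref{CI-LR}.2 via Lemma~\ref{usefullemma}) replaces this integral by the sum over $\gamma\in\bH_{\alpha\beta}\cap\Z^{3}$.

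The only delicate item is (i). The plan is to invoke Theorem~\ref{CI-LR}.2 in its explicit $\SU(4)$ form: expanding $\widehat R_4(T)=\sum_{\kappa\in\widehat{\CK}}\hat r_\kappa\,\chi_\kappa(T)$ gives $\CJ_4(\alpha,\beta;\gamma)=\sum_{\nu'}\hat c_{\nu'}^{(\nu)}\,N_{\lambda-\rho,\mu-\rho}^{\nu'}$, and Lemma~\ref{conj1} forces the left-hand side to be strictly positive for $\nu$ interior. Combined with the non-negativity and integrality of each $N_{\lambda-\rho,\mu-\rho}^{\nu'}$, together with the explicit rational structure of the coefficients $\hat r_\kappa$ (in particular the denominator $\delta_4=6$), one extracts a strict lower bound on the multiplicities at neighbouring weights, which is then transferred to $N_{\lambda\mu}^{\nu}$ through the relation linking $(\lambda,\mu)$ to $(\lambda-\rho,\mu-\rho)$. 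The threshold $4$ reflects the specific support of $\widehat{\CK}$ and the associated multisets of weights for $\SU(4)$.

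The main obstacle is precisely this last step: every other point specialises an already established general result, but (i) requires the explicit polynomial $\widehat R_4$, an enumeration of its character content, and a careful integrality-plus-positivity argument to promote the strict positivity of $\CJ_4$ to the sharp integer lower bound $N_{\lambda\mu}^{\nu}\ge 4$. The subtlety is that the coefficients $\hat c_{\nu'}^{(\nu)}$ are rational while $\CJ_4$ is constrained to be an integer multiple of $1/6$, so the bound must emerge from a combination of rationality with positivity rather than from any termwise estimate.
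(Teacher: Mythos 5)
Points (ii)--(v) of your proposal are correct and follow the paper's own route: (ii) is property (vi) of sec.\,\ref{discIn} plus Lemma \ref{conj1}; (iii) is Proposition \ref{Inasavol} with $d=3$ plus Proposition \ref{quIn} and $\delta_4=6$; (iv) is the Blichfeldt inequality (\ref{blichfeldt}); (v) is (\ref{normIn}) together with the localization (\ref{normInev}).

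Point (i), however, has a genuine gap, and it is exactly at the step you flag as the ``main obstacle.'' The information available from your route is: $\CJ_4(\alpha,\beta;\gamma)>0$ in the interior (Lemma \ref{conj1}), $6\CJ_4\in\Z$, hence $6\CJ_4\ge 1$; and the identity (\ref{exact4ss}), $\CJ_4(\ell(\lambda),\ell(\mu);\ell(\nu))=\inv{6}\sum_{\nu''}N_{\lambda-\rho\,\mu-\rho}^{\nu''}N_{\nu-\rho\,(0,1,0)}^{\nu''}$. This tells you that at least one term in that sum is $\ge 1$, i.e.\ it bounds multiplicities of the \emph{shifted} tensor product $(\lambda-\rho)\otimes(\mu-\rho)$ at weights $\nu''$ neighbouring $\nu-\rho$. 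There is no mechanism in the paper (and none supplied in your sketch) to ``transfer'' such a bound to $N_{\lambda\mu}^{\nu}$ itself; unlike SU(3), there is no relation of the type (\ref{SU3proper}) for SU(4), as the paper explicitly warns at the start of sec.\,\ref{case4}. Likewise the shifted relation (\ref{exact4}) expresses $\CJ_4(\alpha',\beta';\gamma')$ as $\inv{24}\bigl(9N_{\lambda\mu}^\nu+\sum_{\nu'}N_{\lambda\mu}^{\nu'}\bigr)$, and positivity of the left-hand side only bounds the \emph{sum} from below; it cannot isolate $N_{\lambda\mu}^\nu\ge 4$, since a priori the neighbours could carry all the weight. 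The Blichfeldt inequality goes in the wrong direction (it is an upper bound on $N_{\lambda\mu}^\nu$).

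The missing idea is purely polytopal: for $\nu$ interior to the tensor polytope the hive polytope $\CH_{\lambda\mu}^\nu$ is $3$-dimensional, and for SU(4) every hive polytope is \emph{integral} (\cite{Buch}, a fact the paper quotes in sec.\,\ref{exampleSU4}); an integral polytope of dimension $d$ has at least $d+1$ lattice points (\cite{BDLDPS}, Theorem 3.5 --- concretely, its vertices are lattice points and affinely span). With $d=3$ this gives $N_{\lambda\mu}^\nu\ge 4$ directly, with no reference to $\CJ_4$, $\widehat R_4$, or any integrality of coefficients. Note also that this argument relies essentially on integrality of the SU(4) hive polytopes, which fails already for SU(5) (see sec.\,\ref{exampleSU5}), so no purely $\CJ_n$-based argument of the kind you propose could be expected to reproduce the sharp threshold.
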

\begin{proof}
Point 1 results from a general inequality in integral $d$-polytopes that asserts that their number of integral
points is larger or equal to $d+1$, see \cite{BDLDPS}, Theorem 3.5. Here for points $\nu$ inside the tensor polytope,
the polytope $\CH_{\lambda\,\mu}^\nu$ is integral and 3-dimensional, hence $d=3$.
The first part of point 2 has been 
already amply discussed,  while the second one follows from  Lemma \ref{conj1}. 
Points 3 and 5 have been established in sec.\,\ref{conseqTh1}. 
Point 4 follows from Blichfeldt's inequality (\ref{blichfeldt}).
\end{proof}

The consequences of Theorem 1 on the values of $\CJ_4$ at shifted weights will be discussed in the next subsection.

\subsubsection{A few facts about SU(5)}
 \begin{enumerate}
\item 
Based on the study of numerous examples, it seems that for weights $\nu$ interior to the tensor polytope, we have the lower bound
$N_{\lambda\mu}^\nu \ge 8$. Note that the afore mentioned inequality of Theorem 3.5 of \cite{BDLDPS} 
(which would give the weaker $N_{\lambda\mu}^\nu \ge 7$) is no longer 
applicable, since the hive polytope is not generally integral for $n=5$, see a counter-example in sec. \ref{exampleSU5}. 
\item 
$\CJ_5(\alpha,\beta;\gamma)$ vanishes outside (and on the boundary) of the polytope, as already discussed. 
\item 
For a compatible triple $(\alpha,\beta;\gamma)$ and $\gamma$ inside the polytope $\bH_{\alpha\beta}$,
  $360 \CJ_5(\alpha,\beta;\gamma)  $ 
  is a positive 
integer  (see sec.\,\ref{conseqTh1}),  provided $\alpha$ and $\beta$ have only distinct components.  
It is non vanishing according to Lemma \ref{conj1}.
 Moreover  $N_{\lambda\mu}^\nu\le 6!\, \CJ_5(\alpha,\beta;\gamma)   +6$   according to (\ref{blichfeldt}).
\item 
$\sum_{\gamma\in \bH_{\alpha\beta} \cap \Z^4 } 
\CJ_5(\alpha,\beta;\gamma) \frac{\Delta(\gamma)}{\Delta(\alpha)\Delta(\beta)}= \inv{288}$, 
 see (\ref{normIn}) again.
\end{enumerate}
  
\subsection{The polynomials $R_n$ and $\widehat{R}_n$. Application of Theorem 1}
\label{Rnlown}
 As in section \ref{WHCIZ} the notation $\chi_\lambda$ denotes the character of the Lie group $\SU(n)$
 associated with the irrep of highest weight $\lambda$. Also recall that for $n$ odd, $\widehat{R}_n=R_n$.

  \subsubsection{Cases $n=2$ and $n=3$}
  \label{R2-R3}
  For $n=2$ and $n=3$, the polynomial $R_n$  is equal to 1. Indeed:
  \bea \sum_{p_1=-\infty}^\infty \frac{(-1)^{p_1}}{u_1+2\pi p_1}&=&\inv{2\sin(u_1/2)} =\frac{\ii}{\Delta(e^{\ii t_j})}\\
\nonumber \sum_{p_1,p_2=-\infty}^\infty \frac{1}{(u_1+2\pi p_1)(u_2+2\pi p_2)(u_1+u_2+2\pi (p_1+p_2))}&=&\inv{2^3\sin(u_1/2)\sin(u_2/2)\sin((u_1+u_2)/2)}\\ &=& \frac{\ii^3}{\Delta(e^{\ii t_j})}\,. \eea
On the other hand, 
$$ P.V. \sum_{p_1=-\infty}^\infty \frac{1}{u_1+2\pi p_1}=\inv{u_1}+\sum_{p_1=1}^\infty \frac{2u}{u_1^2-(2\pi p_1)^2}
=\frac{\cos(u_1/2)}{2\sin(u_1/2)} =\frac{\oh \ii \tr T}{\Delta(e^{\ii t_j})}\,,$$
hence $\widehat{R}_2(T)=\oh \chi_{1}(T)$, while  $\widehat{R}_3=R_3 = 1$.

\subsubsection{Case $n=4$}
\label{R4}
In contrast, for $n\ge 4$, one finds non trivial polynomials $R_n(T)$ and ${\widehat R}_n(T)$.
For instance for $n=4$,  with the notations   $D_4$,  ${\widehat D}_4$  and $\varpi_4$ introduced in (\ref{defpin})
\bea \nonumber\!\!\!\!\!\!\!\!\!\!\!\!\!\!\!\!
{ D_4}&=&\sum_{p_1, p_2, p_3 =-\infty}^\infty \prod_{1\le i < i'\le 4}   \frac{(-1)^{p_1+p_3}}{u_i+u_{i+1}+\cdots +u_{i'-1}+(p_i+\cdots +p_{i'-1})(2\pi)} \\ \nonumber
&=& \frac{ \inv{12}\Big( 6 +\sum_{1\le i < j\le 4}\cos\oh(u_i+\cdots+u_{j-1}) \Big)}{\varpi_4}
= \frac{\inv{24}(\tr T \tr T^\star +8)}{  \varpi_4}\\
& =&{ \ii^6 } \frac{\inv{24}(\tr T \tr T^\star +8)}{\Delta(e^{\ii t_i})}\,,  \eea
and likewise
\bea \nonumber\!\!\!\!\!\!\!\!\!\!\!\!\!\!\!\!
{ {\widehat D}_4}&=&\sum_{p_1, p_2, p_3 =-\infty}^\infty \prod_{1\le i < i'\le 4}   \frac{1}{u_i+u_{i+1}+\cdots +u_{i'-1}+(p_i+\cdots +p_{i'-1})(2\pi)} \\ \nonumber
&=& \frac{ \inv{3}
 \left(2 \cos \left(\frac{{u_2}}{2}\right) \cos
   \left(\frac{{u_1}}{2}+\frac{{u_2}}{2}+\frac{{u_3}}{2}\right)+\cos
   \left(\frac{{u_1}}{2}-\frac{{u_3}}{2}\right)\right)}{\varpi_4} 
   =\frac{\inv{6}  
   \sum_{1\le i<j\le 4} e^{\ii (x_i+x_j)}}{\varpi_4} 
      \\
   &=& { \ii^6 } \frac{\inv{12}((\tr T )^2 - \tr T^2)}{\Delta(e^{\ii t_i})}
   \eea
 hence
\bea\label{P4}R_4(T)&=&\inv{24}(\tr T \,\tr T^\star +8 )= \inv{24}(9 +  \chi_{(1,0,1)}(T))\,,\\
\widehat{R}_4(T)&=&{\inv{12}((\tr T )^2 - \tr T^2)} =\inv{6} \chi_{(0,1,0)}(T)\,. \eea
\def\balpha{\hat{\alpha}}
\def\bbeta{\hat{\beta}}
Now, in SU(4), we can write
\bea\nonumber \chi_{(1,0,1)}(T)\chi_\nu(T)&=&\chi_\nu(T) + \sum_{\nu' }  \chi_{\nu'}(T)\\
\nonumber  \chi_{(0,1,0)}(T)\chi_{\nu-\rho}(T)&=& \sum_{\nu'' }  \chi_{\nu''}(T)\eea
with a sum over the h.w. $\nu'$, resp. $\nu''$, appearing in the decomposition of $\nu\otimes (1,0,1)$,
resp. of $(\nu-\rho)\otimes (0,1,0)$. 
Notice that $(1,0,1)$ is the highest weight of the adjoint representation, hence 
one may write $\nu'=\nu+\balpha$ where $\balpha$
runs over the 12 non zero roots $\balpha$ {for $\nu$ ``deep enough" in the Weyl chamber, 
\ie provided all $\nu+\balpha$ are dominant weights},
and over three times the weight $0$ . 
Thus we may write
\bea \label{exact4}
{\CJ_4(\ell(\lambda+\rho),\ell(\mu+\rho); \ell(\nu+\rho))}
&=& \inv{24}( 9 N_{\lambda\, \mu}^{\nu} +\sum_{\nu'} N_{\lambda\, \mu}^{\nu'}) 
\\ \nonumber
\mathrm{and\ for}\ \nu \mathrm{\ deep\ enough\ in\ } C\quad
&=&\oh (N_{\lambda\, \mu}^{\nu}+\inv{12}\sum_{\balpha} N_{\lambda\, \mu}^{\nu+\balpha})
=
 N_{\lambda\, \mu}^{\nu} +\oh \Delta N_{\lambda\, \mu}^{\nu}
\,.\eea
\\
where $\Delta N_{\lambda\, \mu}^{\nu}:= \inv{12}\sum_{\balpha}( N_{\lambda\, \mu}^{\nu+\balpha}- N_{\lambda\, \mu}^{\nu})$ may be regarded as a second derivative term
(a discretized Laplacian), while the ``first derivative" term vanishes because of $\sum \balpha=0$. \\[5pt]
Example: Take $\lambda=(1,2,2)$, $\mu=(2,2,1)$, $\nu=(1,4,1)$,   the $\nu'$ and their multiplicities read
\bea\nonumber (\nu', N_{\nu\, (1,0,1)}^{\nu'})&=&  \{(0, 3, 2), 1), ((0, 4, 0), 1), ((0, 5, 2), 1), ((0, 6, 0), 
  1), ((1, 3, 3), 1),\\ \nonumber && ((2, 2, 2), 1), ((2, 3, 0), 1), ((2, 4, 2), 
  1), ((2, 5, 0), 1), ((3, 3, 1), 1), ((1, 4, 1), 3)\}\,,\eea 
    $\CJ_4(\ell(\lambda+\rho),\ell(\mu+\rho); \ell(\nu+\rho))=97/24$
  while $N_{\lambda\mu}^\nu=5$, $\sum_{\nu'}N_{\nu\, (1,0,1)}^{\nu'} N_{\lambda\, \mu}^{\nu'}=52$, the rhs
of (\ref{exact4})
  equals $97/24$, and matches the lhs.
  Note that in that example, only 10 out of the 12 $\balpha$ contribute.\\

  There is a second relation, which follows from (\ref{CI-LR1ev}) with the above expression of $\widehat{R}_4$
\be\label{exact4ss}{\CJ_4(\ell(\lambda),\ell(\mu); \ell(\nu))}=\inv{6} \sum_{\nu''}  N_{\lambda-\rho\, \mu-\rho}^{\nu''} N_{\nu-\rho\, (0,1,0)}^{\nu''}\,.\ee
  For the previous example $\lambda=(1,2,2)$, $\mu=(2,2,1)$, $\nu=(1,4,1)$, 
  three weights $\nu''$ contribute $N_{\nu-\rho\, (0,1,0)}^{\nu''}=1$, namely $(0, 2, 0),\  (1, 2, 1),\  (0, 4, 0)$, but 
  only the first two give $N_{\lambda-\rho\, \mu-\rho}^{\nu''} =1$, the third has $N_{\lambda-\rho\, \mu-\rho}^{\nu''} =0$,
  and the rhs equals $\inv{3}$, which is the value of $\CJ_4(\ell(\lambda),\ell(\mu); \ell(\nu))$.

\subsubsection{Case $n=5$}
\label{R5}
For $n=5$, likewise 
\bea \nonumber R_5(T)
&=&\frac{1}{180} \Big[45 +12 \big(\cos(x_1-x_2) +\mathrm{perm.:\ 10\ terms\ in\ total}\big)\\ 
&&\nonumber \qquad
+\big(\cos(x_1+x_2-x_3-x_4) +\mathrm{perm.:\ 15\ terms\ in\ total}\big) \Big]
\\ \nonumber 
&=& \frac{7}{72}+\inv{40} \tr T \tr {T^\star} +\inv{1440}[ (\tr T)^2-\tr T^2][c.c.] \\
\nonumber &=& \frac{7}{72}+\inv{40} \chi_{(1,0,0,0)}(T)  \chi_{(1,0,0,0)}(T^\star) 
+\inv{360}\chi_{(0,1,0,0)}(T) \chi_{(0,1,0,0)}(T^\star)\\ \nonumber 
&=&
\inv{360} \(45 + 10 \chi_{(1,0,0,1)}(T)+  \chi_{(0,1,1,0)}(T)\)\,.
\eea

{Comment: note that at $T=I$, $45+ 10\times 24 + 75=360$, $R_5(I)=1$, as it should.}\\
Then denoting the h.w.   appearing in $(1,0,0,1)\otimes \nu$, resp. $(0,1,1,0)\otimes \nu$, by $\nu'$, 
resp. $\nu''$, 
$$R_5(T) \chi_\nu(T)= \inv{360} \Big(45 \chi_\nu(T) + 10  \sum_{\nu'} \chi_{\nu'}(T)+  \sum_{\nu''} \chi_{\nu''}(T)\Big)$$
and 
\be \label{exact5}
360 \CJ_5(\ell(\lambda+\rho),\ell(\mu+\rho); \ell(\nu+\rho)) =
45 N_{\lambda\, \mu}^{\nu}  +10 \sum_{\nu'}N_{\lambda\, \mu}^{\nu'}+ \sum_{\nu''}N_{\lambda\, \mu}^{\nu''}
\,.\ee
Here again, for $\nu$ ``deep enough" in $C$, we can make the formula more precise:  $\nu'-\nu$ runs over 
the 24 weights (=roots) of the adjoint representation $(1,0,0,1)$, including
 4 copies of $0$ and 20 non zero roots $\balpha$; likewise $\nu''-\nu$ runs over the 75 weights of the 
 $(0,1,1,0)$ representation, including 5 copies of $0$, twice the 20 $\balpha$ 
and  the 30 weights $\hat \beta$ of the form  $\pm(\hat\alpha_{ij}\pm \hat\alpha_{kl})$ with 
$1\le i<j<k<l   \le 5$
or  $\pm(\hat\alpha_{ij} + \hat\alpha_{kl})$ with $1\le i<k<j<l\le 5$. 
Here we are making use of the 
notations $\hat \alpha_i$, $1\le i\le 4$ for  the simple roots, and
$\hat\alpha_{i j}= \hat\alpha_i+\cdots +\hat\alpha_{j-1} $ with   $1\le i<j\le 5$
 for the positive roots .
 Thus ``deep enough"
actually  means: all $\nu+\balpha$ and $\nu+\bbeta \in C$. Then (\ref{exact5}) reads 
\be  \label{exact5b}
{\CJ_5(\ell(\lambda+\rho),\ell(\mu+\rho); \ell(\nu+\rho))}
=N_{\lambda\, \mu}^{\nu} + \inv{30}\sum_{\balpha} (N_{\lambda\, \mu}^{\nu+\balpha}-N_{\lambda\, \mu}^{\nu} )
+\inv{360} \sum_{\bbeta}(N_{\lambda\, \mu}^{\nu+\bbeta} -N_{\lambda\, \mu}^{\nu} )
 \ee 
(with $20/30 + 30/360=3/4$).\\[4pt]
Example. $\lambda=(2, 3, 3, 2)$, $\mu=(3, 2, 3, 2)$, $\nu=(5, 3, 2, 3)$, $N_{\lambda\mu}^\nu=211$.
We find in the lhs of (\ref{exact5}) $360 \CJ_5(\ell(\lambda+\rho),\ell(\mu+\rho); \ell(\nu+\rho)) =63213$
while the three terms in the rhs equal respectively ${9495, 42010, 11708}$ with a sum of 63213, qed.

\subsubsection{Case $n=6$}
\label{R6}
We have found, after long and tedious calculations
\bea\nonumber
2^7 9!\, R_6 &=& 
31356\big( \cos(x_1+x_2+x_3 -x_4-x_5-x_6) +\mathrm{perm.:\ 10\ terms\ in\ total}\big)\\ \nonumber
&&\ + \big( \cos(x_1+x_2+2x_3 -x_4-x_5-2x_6) +\mathrm{perm.:\ 90\ terms\ in\ total}\big)\\ \nonumber
&&\ + 1923\big( \cos(x_1+x_2+x_3 -x_4-2x_5) +\mathrm{perm.:\ 120\ terms\ in\ total}\big)\\ \nonumber
&&\ + 284238 \big( \cos(x_1+x_2-x_3 -x_4) +\mathrm{perm.:\ 45\ terms\ in\ total}\big)\\ \nonumber
&&\ +126 \big( \cos(2x_1+x_2-2x_3 -x_4) +\mathrm{perm.:\ 180\ terms\ in\ total}\big)\\ \nonumber
&&\ + 18906\big( \cos(x_1+x_2-2x_3 ) +\mathrm{perm.:\ 60\ terms\ in\ total}\big)\\ \nonumber
&&\ + 1362\big( \cos(2x_1-2x_2) +\mathrm{perm.:\ 15\ terms\ in\ total}\big)\\ \nonumber
&&\ + 1801128\big( \cos(x_1-x_2) +\mathrm{perm.:\ 15\ terms\ in\ total}\big)\\ \nonumber
&&\ + 4919130\,.\\ \nonumber
\eea

 Alternatively

 \bea\nonumber
 2^8\, 9!\, R_6(T) &=& 
 1699488 + 715852 \chi_{(1,0,0,0,0)}(T) (c.c.)+ 860 
 \chi_{(2,0,0,0,0)}(T) (c.c.)\\ \nonumber &&\ + 
 12032 \Big(\chi_{(0,1,0,0,0)}(T)  \chi^*_{(2,0,0,0,0)}(T)+c.c.\Big)
 + 
 202 683 \chi_{(0,1,0,0,0)}(T) (c.c.) \\ \nonumber &&\ + 
  124 \chi_{(1, 1,0,0,0)}(c.c.)  - 5207 \Big(\chi_{(0,0,1,0,0)}(T)  \chi^*_{(1,1,0,0,0)}(T)+c.c.\Big)  
 + 10414 \chi_{(0,0,1,0,0)}(T)  (c.c.)\\ \nonumber
 &&\ + \chi_{(1,0,1,0,0)}(T)  (c.c.) + 6876 \Big( \chi_{(1,0,1,0,0)}(T) \chi_{(0,1,0,0,0)}(T) +c.c.\Big)
 \\ \nonumber &=& 2629422 \chi_{(0,0,0,0,0)}(T) +
 1670\Big(\chi_{(0,0,1,1,1)}(T) +c.c.\Big)+ 24167 \chi_{(0, 0, 2, 0, 0)}(T)\\ \nonumber &&\ + 
 13826\Big( \chi_{(0, 1, 0, 0, 2)}(T)+c.c.\Big) +216561 \chi_{(0, 1, 0, 1, 0)}(T)+ 957461 \chi_{(1, 0, 0, 0, 1)}(T)
 \\ \nonumber &&\ + 
 \chi_{(1, 0, 2, 0, 1)}(T)+ 125\chi_{(1, 1, 0, 1, 1)}(T)+985 \chi_{(2, 0, 0, 0, 2)}(T)\,.
 \eea
 where the last expression is a decomposition as a sum over real representations, with a total dimension $2^8 9!$, as it should.\\
We also found :
 
  \bea\nonumber
9! \; \widehat{R}_6(T) &=& 
5422 \chi_{(0, 0, 1, 0, 0)}(T) + 
         \chi_{(0, 1, 1, 1, 0)}(T) + 
   13 (\chi_{(0, 2, 0, 0, 1)}(T) + \chi_{(1, 0, 0, 2, 0)}(T)) + \\
   {} & {} &  \label{Rhat6}
 186 \chi_{(1, 0, 1, 0, 1)}(T) + 
 982 (\chi_{(0, 0, 0, 1, 1)}(T) + \chi_{(1, 1, 0, 0, 0)}(T))\,.
 \eea
When evaluated at $T=1$ we check that the dimension count is correct: \\
 $(5422, 1, 13, 186, 982).(20, 1960,  560\times 2, 540, 70\times 2) = 9!$. 
  
We leave it to the reader to write the relations involving $N_{\lambda \mu}^{\nu}$ that follow from (\ref{CI-LR1}) and
 (\ref{CI-LR1ev}), see an example below in sec. \ref{exampleSU6}.
  
 \subsection{Stretching polynomials}
\label{LRpoly}
 
\subsubsection{The case $n=2$}
This is a trivial case. Since for any admissible triple, $N_{\lambda\mu}^\nu=1$, we have, 
 according to a general result \cite{KTT04},  $P_{\lambda\mu}^\nu(s)=1$.
\subsubsection{The case $n=3$}
For $n=3$, we have, from point 2. in sec.\,\ref{caseSU3}
$$ N_{\lambda\mu}^\nu-1=
\CJ_3(\ell(\lambda),\ell(\mu); \ell(\nu))$$
and the latter is an homogeneous linear function of $s$, hence
\be P_{\lambda\mu}^\nu(s)=N_{s\lambda\, s\mu}^{s\nu}=\CJ_3(\ell(\lambda),\ell(\mu); \ell(\nu))+1
= (N_{\lambda\mu}^\nu-1)s +1\,.\ee
This expression is also valid for weights $\lambda$ and/or $\mu$ on the boundary of the Weyl chamber $C$,
in which case, as is well known (``Pieri's rule"), all LR multiplicities equal 1, and then 
again by the same general result \cite{KTT04}, 
$P_{\lambda\mu}^\nu(s)=1$, while as noticed above, $\CJ_3=0$. Likewise as noticed  in sec.\,2.2.2,
if $\nu$ lies on the boundary of tensor polytope, (the outer matriochka), $N_{\lambda\, \mu}^{\nu}=1$  
 and thus again, $P_{\lambda\mu}^\nu(s)=1$.

 Remark. The property that $P_{\lambda \mu}^{\nu} (s) = 1 + s (N_{\lambda \mu}^{\nu} -1)$ had been
 proved in \cite{KTT04}, then recovered in \cite{Rass} using vector partition functions.

\subsubsection{The case $n=4$}
\label{stretch2}

For $n=4$,  given weights $\lambda,\mu \notin \partial C$, 
and  weights $\nu$ {\it interior} to the polytope, 
$\CJ_4(\ell(\lambda)\ell(\mu); \ell(\nu)))\ne 0$  (assuming that Lemma \ref{conj1} holds true) and
the stretching polynomial $P_{\lambda\mu}^\nu(s)$ is of degree exactly 3. 
 Now let us Taylor expand
$$ \CJ_4( \ell(s\lambda+\rho),\ell(s\mu+\rho); \ell(s\nu+\rho))=
s^3\CJ_4(\ell(\lambda),\ell(\mu); \ell(\nu)) + \oh s^2 {a} +O(s)\,,$$
where the coefficient
 $a$, stemming here from the first order derivatives of $\CJ_4$, will receive shortly a geometric interpretation.

The stretching polynomial $P_{\lambda\mu}^\nu(s) $ must satisfy the three conditions
\begin{enumerate}
\item $P_{\lambda\mu}^\nu(1)=N_{\lambda\mu}^\nu$, by definition; 
\item $P_{\lambda\mu}^\nu(0)=1$;  
\item $P_{\lambda\mu}^\nu(s)= \CJ_4(\ell(\lambda),\ell(\mu); \ell(\nu)) s^3+   
 \oh s^2 { a} +O(s)$, as discussed in (\ref{asymp2}).
\end{enumerate}
Recall now the discussion of sec.\,\ref{Epols} and \ref{LRpols} : $\CJ_4(\ell(\lambda),\ell(\mu); \ell(\nu))$ is $\inv{6}$ times
the normalized volume $\CV$  of the hive polytope, and $a$  is  
half the total normalized area $\CA$. 
There is a unique polynomial satisfying these conditions, namely 
\bea \nonumber
P_{\lambda\mu}^\nu(s) &=&\CJ_4(\ell(\lambda),\ell(\mu); \ell(\nu))   s^3+  
\inv{4} \CA s^2+ 
\Big( N_{\lambda\mu}^\nu - \CJ_4(\ell(\lambda),\ell(\mu); \ell(\nu)) - 
\inv{4} \CA  
-1\Big)  s+1\\
\label{polstr4}  &=&  \inv{6} \CV \,s^3 +\inv{4} \CA s^2 + (N_{\lambda\mu}^\nu  -  \inv{6} \CV - \inv{4} \CA  -1)s +1\,.\eea

 Then the alleged non-negativity of the $s$ coefficient \cite{KTT04}  
  amounts to 
\be\label{ineq1} N_{\lambda\mu}^\nu  \,{ \buildrel  ? \over{\ge } }\,\inv{6} \CV  +\inv{4} \CA +1\,,\ee
while the counting of interior points, through Ehrhart--Macdonald reciprocity theorem, gives
us another lower bound on $ N_{\lambda\mu}^\nu$
$$ \#\mathrm{(interior\ points)} =- P_{\lambda\mu}^\nu(-1)= N_{\lambda\mu}^\nu-(\oh \CA +2)\ge 0\,. $$

In \cite{BMM,BDLDPS}
  inequalities were obtained  between coefficients of the Ehrhart polynomial of an
 integral polytope.  Recall that for $n=4$,
 all hive polytopes are integral \cite{Buch}, and we may apply on (\ref{polstr4}) these inequalities  which read 
 \bea \nonumber  \frac{\CA}{4} &\le& \frac{\CV}{2}+\oh\\  \nonumber
N_{\lambda\mu}^\nu  -  \inv{6} \CV - \inv{4} \CA  -1 &\le& \frac{\CV}{3} +\frac{3}{2}
 \eea
  hence 
 \be N_{\lambda\mu}^\nu  \le \frac{\CV}{2} + \frac{\CA}{4} +\frac{5}{2} \le \CV +3\ee
 which is precisely the  Blichfeldt inequality mentioned above at point 4 of sec. \ref{case4}.
 \\[5pt]

 In contrast, for non generic triples $(\lambda,\mu;\nu)$,
 $\CJ_4(\ell(\lambda),\ell(\mu); \ell(\nu))  =0$, the stretching 
polynomial is of degree strictly less than 3, and reads in general
 \be \label{degcase} P_{\lambda\mu}^\nu(s) =\frac{a}{2} s^2 +( N_{\lambda\mu}^\nu -\frac{a}{2}  -1) s +1\,.\ee
 If the coefficient $a$  is non vanishing, it has
now to be interpreted as the normalized area of the 2-dimensional hive polytope (a 
polygon). If $a=0$, either $N_{\lambda\mu}^\nu\ge 2$  and $P_{\lambda\mu}^\nu(s) = (N_{\lambda\mu}^\nu-1)s+1$,
or $N_{\lambda\mu}^\nu=1$ and $P_{\lambda\mu}^\nu(s) =1$, 
consistent with the result of sec.\,\ref{subleadingterms} and the two general results $P=1$ if $N_{\lambda\mu}^\nu=1$ and $P=s+1$ if $N_{\lambda\mu}^\nu=2$.  \\
In the former case (dimension 2 polytope, degree 2 Ehrhart polynomial), 
Erhrart--Macdonald reciprocity theorem gives us 
an {\it upper} bound on $N_{\lambda\mu}^\nu \le a +2$,  while the alleged  non-negativity of the $s$-coefficient gives a 
lower bound, $N_{\lambda\mu}^\nu \ge\oh( a +2)$. Thus one should have
\be\label{ineq2} \oh( a +2)\,  \buildrel  ? \over{\le } \, N_{\lambda\mu}^\nu \le a +2\,.\ee
Also denoting $ c:=\# {\rm internal\ points\ }= P(-1)= a -N_{\lambda\mu}^\nu +2$,
$b=\# $ boundary points, $b+c:=\# {\rm total \ of \ points\ }= N_{\lambda\mu}^\nu$, hence $a+2= b+2c$
which is Pick's formula for the Euclidean area  $a/2= b/2 + c-1$.
 \\[5pt]

\def\CJ{{\mathcal J}}
\noindent {\bf Examples}:  Here
we denote for short  $\CJ'_4=\CJ_4( \ell(s\lambda+\rho),\ell(s\mu+\rho); \ell(s\nu+\rho))$. \\[4pt]
Take $\lambda=(2,2,1)$, $\mu=(2,1,3)$, \\
for $\nu=(0,1,4)$, $N_{\lambda\mu}^\nu=3$, $P_{\lambda\mu}^\nu(s) =\oh(s+1)(s+2)$, 
$\CJ'_4=\inv{12}(6s^2+15 s +7)$\\
 while
for $\nu=(2,4,0)$, $N_{\lambda\mu}^\nu=3$, $P_{\lambda\mu}^\nu(s) =2s+1$,  $ \CJ'_4=
\oh(1+4s)$\\
  and for $\nu=(2,0,4)$, $N_{\lambda\mu}^\nu=4$, $P_{\lambda\mu}^\nu(s) = (s+1)^2$, 
  $ \CJ'_4=\inv{4}(4s^2+7s+2)$. \\[5pt]
    Take $\lambda=(3,0,3)$, $\mu=(2,3,1)$, \\ for $\nu=(3,4,0)$,  $N_{\lambda\mu}^\nu=3$, $P_{\lambda\mu}^\nu(s) =2s+1$,
  $ \CJ'_4=\inv{8} (14s+5)$, \\
  while for $\nu=(2,3,1)$, $N_{\lambda\mu}^\nu=6$, $P_{\lambda\mu}^\nu(s) =(s+1)(2s+1)$, $\CJ'_4=\inv{8}(16 s^2+18s+3)$.

\subsection{The hive polytope: three examples}
\subsubsection{An example in SU(4)}
\label{exampleSU4}

\begin{figure}[bthp]
\centering{
\includegraphics[width=15pc]{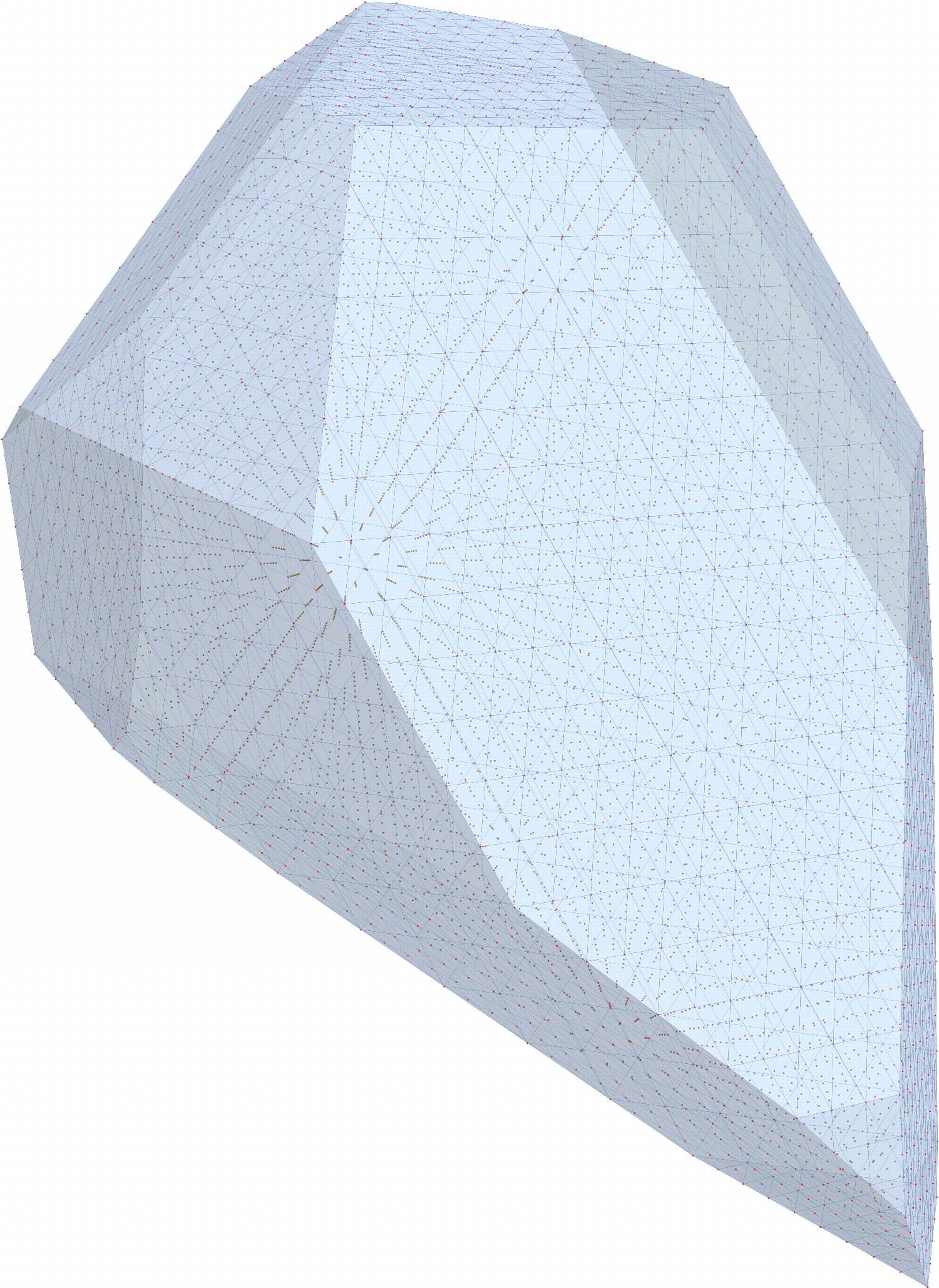}
\qquad
\includegraphics[width=20pc]{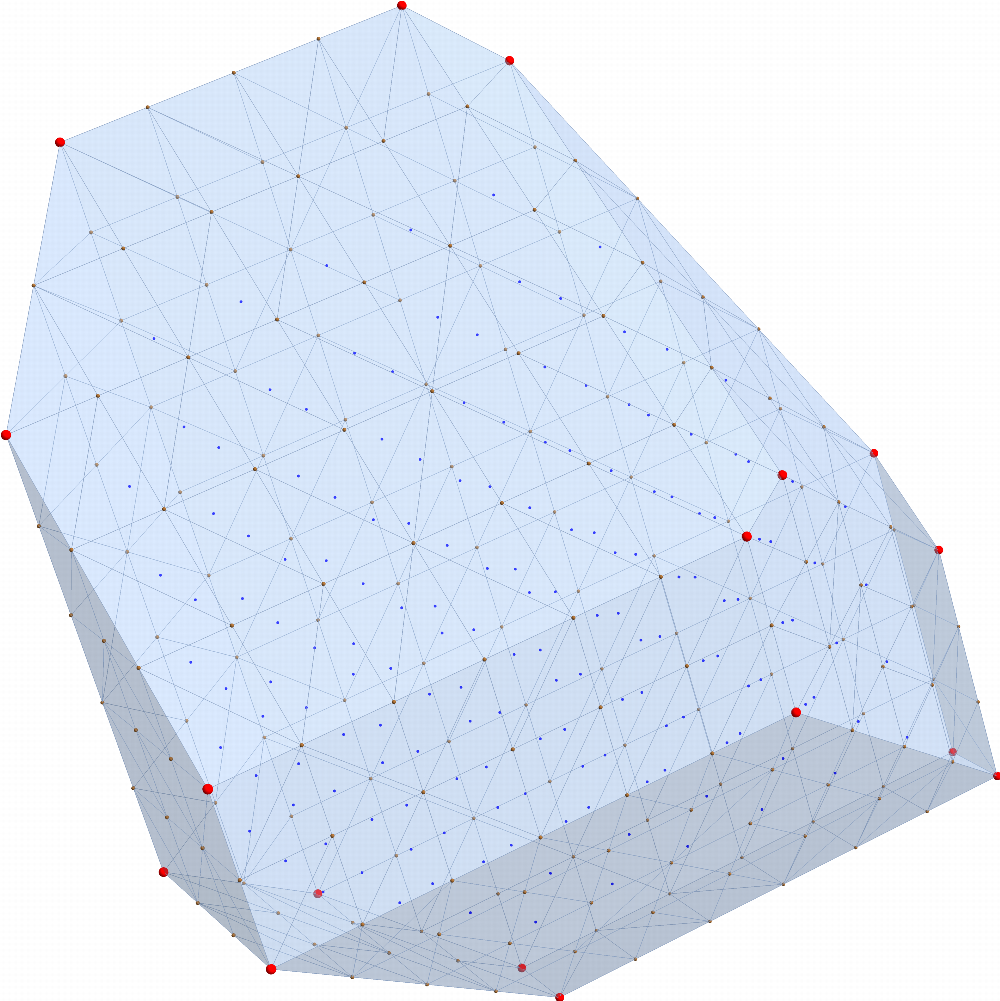}
\caption{\label{SU4Example} 
Left: The  $\SU(4)$ tensor polytope
{${\mathbf H}_{\lambda\mu}$} for $\lambda= (21,13, 5)$,  $\mu= (7,10, 12)$ ,  and its $7092$ integral points (distinct irreps).
   Each such point can itself be thought as a hive polytope, for example the one given on the right.\\
Right: The $\SU(4)$ hive polytope {${\mathcal H}_{\lambda\mu}^\nu$} 
associated with the
branching rule:  $((21,13, 5), (7,10, 12); (20,11,9))$.
Each integral point (367 of them)  stands for a pictograph describing an allowed coupling of this triple, for example the one given in fig.~\ref{apictograph}.}
}
\end{figure}

{
Consider the irreps of highest weight $\lambda= (21,13, 5)$ and $\mu= (7,10, 12)$.
Their tensor product contains $7092$ distinct irreps $\nu$ with  multiplicities ranging from $1$ to $377$. 
The tensor polytope ${\mathbf H}_{\lambda\mu}$ is displayed in fig.~\ref{SU4Example}, left.
The total multiplicity (sum of multiplicities for the various $\nu$'s) is $537186$.

Let us now consider
a particular term in  the decomposition of the tensor product into irreps: the admissible triple $(\lambda,\mu; \nu)$, with $\nu=(20,11,9)$, whose multiplicity is equal to $367$.
This term can be thought  of as a particular point of the 
tensor polytope
and stands itself for a hive polytope of dimension $3$ ($d=(n-1)(n-2)/2=3$ for $\SU(4)$).
It is displayed in fig.~\ref{SU4Example}, right.
It has $367$ integral points: $160$ are interior points, in blue in the figure, and $207$ are boundary points. Among the latter, $17$ are vertices, in red in the figure, the other boundary points are in brown.
The polytope is integral since its vertices are integral -- it is always so for SU(4) (see \cite{Buch}, example 2). 
Every single one of the $367$ points of the polytope displayed in fig.~\ref{SU4Example}, right, 
stands for a pictograph contributing by $1$ to the multiplicity of the chosen tensor product  branching rule.
For illustration, we display one of them on fig.~\ref{apictograph}; actually we give  several versions of this pictograph: 
first, the isometric honeycomb version and its dual, the O-blade version,  and then, the KT-honeycomb version and its corresponding hive.
Notice that for the first two kinds of pictographs the external vertices are labelled by Dynkin components of the highest weights, whereas for the last two, they are labelled by  Young partitions.

The hive polytope has $12$ facets (eight quadrilaterals, three pentagons and one heptagon), $27$ edges, and $17$ vertices (and Euler's identity is satisfied: $12-27+17=2$). \\
 Its normalized volume and
area are ${\mathcal V} = 1484$ and  ${\mathcal A} = 410$.}\\
The number of pictographs with prescribed edges gives the following sequence of multiplicities\\
$N_{s \lambda\, s\mu}^{s \nu}=  \{367, 2422, 7650, 17535, 33561, 57212, 89972, 133325, 188755, 257746, \ldots\}$,  for $s=1,2\ldots$
Only the first three terms of this sequence are used to determine the LR polynomial if we impose that its constant term be equal to 1:
$P_{\lambda, \mu}^{\nu}(s)=  (5936 s^3 + 2460 s^2 + 388 s + 24)/4!$\; 
From our discussion in sec.~\ref{Epols}, $P_{\lambda, \mu}^{\nu}(s)$ should be equal to the Ehrhart polynomial $E(s)$ of the hive polytope; using the computer algebra package Magma \cite{Magma} we checked that it is indeed so.
\vskip 0.1cm
{ The direct calculation of $\CJ_4$ using  (\ref{CI4}) gives
$\CJ_4(\ell(\lambda),\ell(\mu); \ell(\nu))=742/3$, 
and more generally $\CJ_4( \ell(s \lambda),\ell(s \mu); \ell(s\nu))=742 \, s^3/3$.
Using the same eq. (\ref{CI4}), we can also calculate $\CJ_4$ for $\rho$-shifted arguments: $\CJ_4( \ell(s \lambda+\rho),\ell(s \mu+\rho); \ell(s\nu+\rho)) =   \frac{742 }{3} s^3 + \frac{205}{2}s^2  +12 s+ \frac{5}{12}$.
In agreement with our general discussion {of sec.~\ref{subleadingterms}}, the  first two terms
of $ P_{\lambda \mu}^{\nu}(s)$ and of 
$ \CJ_4( \ell(s \lambda+\rho),\ell(s \mu+\rho); \ell(s\nu+\rho))$ are identical, the leading term being  also equal to $\CJ_4( \ell(s\lambda),\ell(s\mu); \ell(s\nu))$.
One checks that the leading coefficient of $E(s)$, hence of $P_{\lambda, \mu}^{\nu}(s)$, is equal to $\frac{1}{3!}$ of
the normalized
volume of the polytope and that
the second coefficient is equal to  $\inv{2}\inv{2!}$ of the normalized
2-volume of its boundary.
In accordance with  Ehrhart--Macdonald reciprocity theorem, one also checks  that $-P_{\lambda \mu}^{\nu}(-1)=160$, the number of interior points in the polytope.
Finally, on this example, one can test eq (\ref{exact4}) which relates 
$\CJ_4( \ell(\lambda+\rho),\ell(\mu+\rho); \ell(\nu+\rho))=1449/4$ 
to a sum of the Littlewood-Richardson coefficient $N_{\lambda \mu}^\nu$ and  its twelve ``neighbors" $\nu+\hat\alpha$ appearing in the tensor product $\nu\otimes (1,0,1)$. 
Likewise eq (\ref{exact4ss})  relates $6 \CJ_4(\ell(\lambda),\ell(\mu); \ell(\nu))=1484$ 
to a sum over six weights $\nu''=(18, 10, 9),(18, 11, 7),(19, 9, 8),(19, 11, 8),(20, 9, 9),(20, 10, 7)$ of
 the product $N_{\lambda-\rho\,\mu-\rho}^{\nu''} N_{\nu-\rho\, (0,1,0)}^{\nu''}$   
 which takes the respective values ${254, 235, 254, 243, 259, 239}$,  the sum being indeed $1484$.

\begin{figure}[!tbp]
 \centering
 \begin{minipage}[b]{0.4\textwidth}
  \includegraphics[width=16pc]{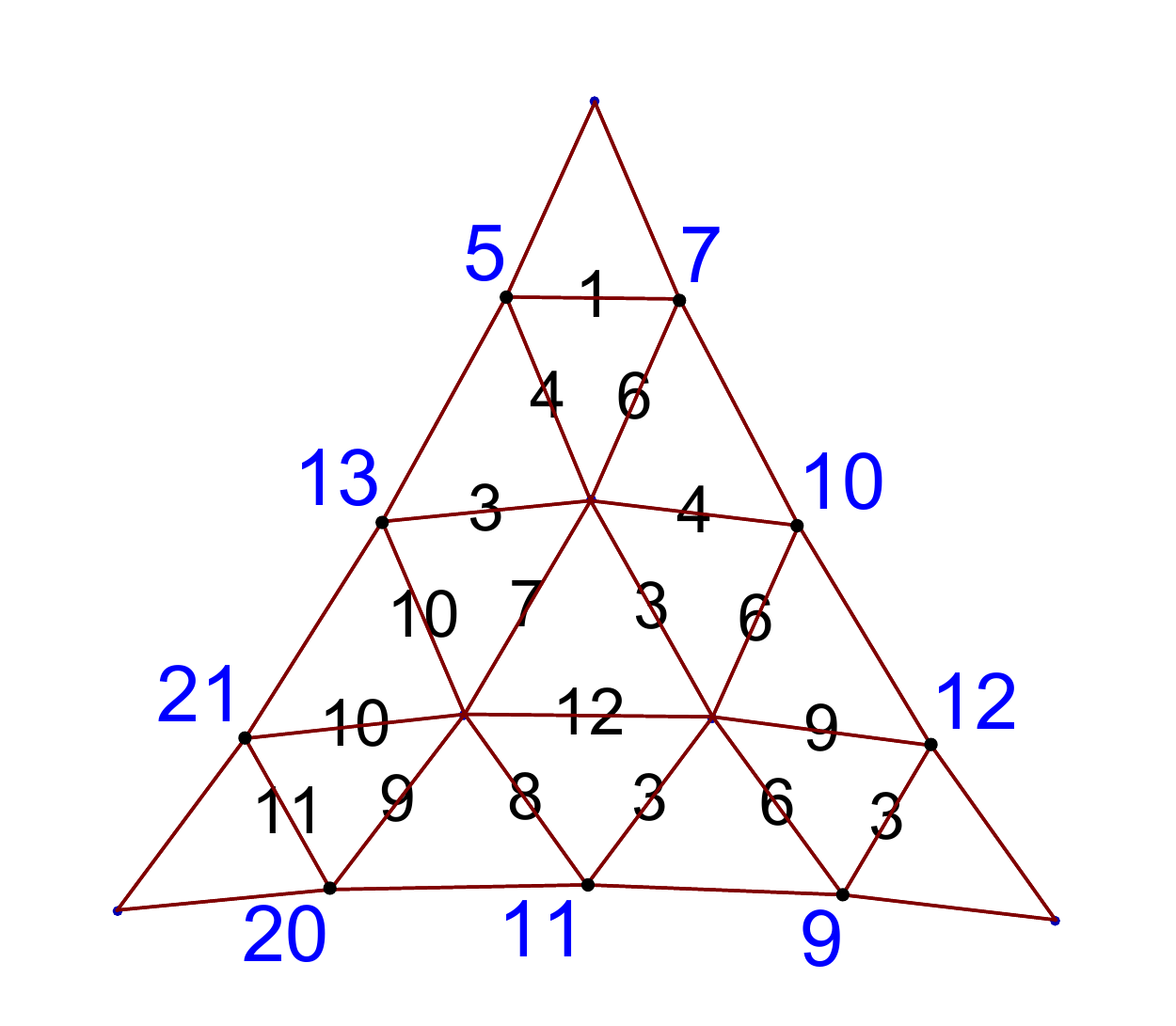} 
   \caption*{O-blade version: edges are non-negative integers, opposite angles (sum of adjacent edges) around the inner points are equal.}
 \end{minipage}
 \hfill
 \begin{minipage}[b]{0.4\textwidth}
   \includegraphics[width=16pc]{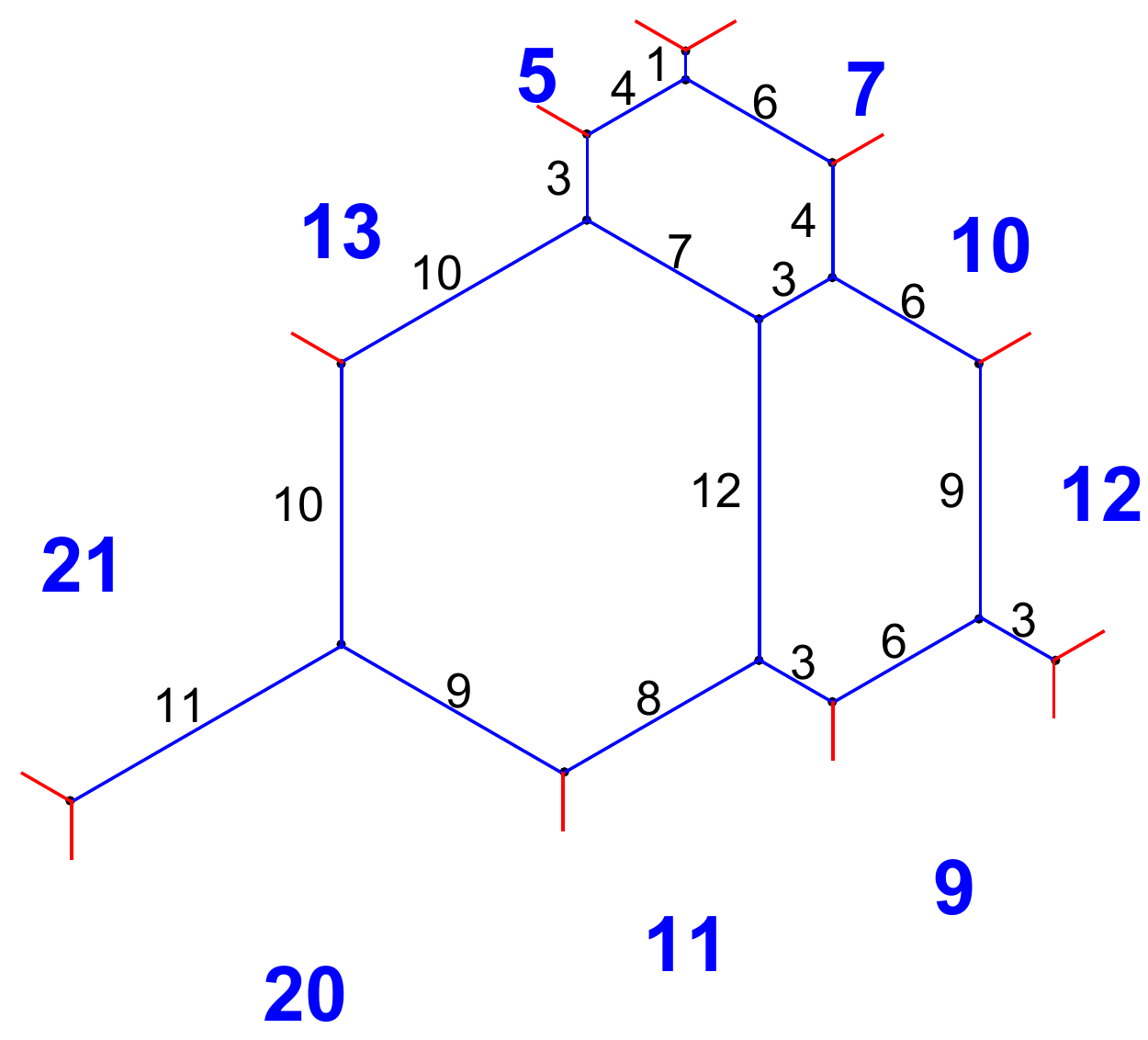}
   \caption*{Isometric honeycomb version: opposite angles (sum of adjacent edges) of hexagons are equal.}
 \end{minipage}
 \includegraphics[width=15pc]{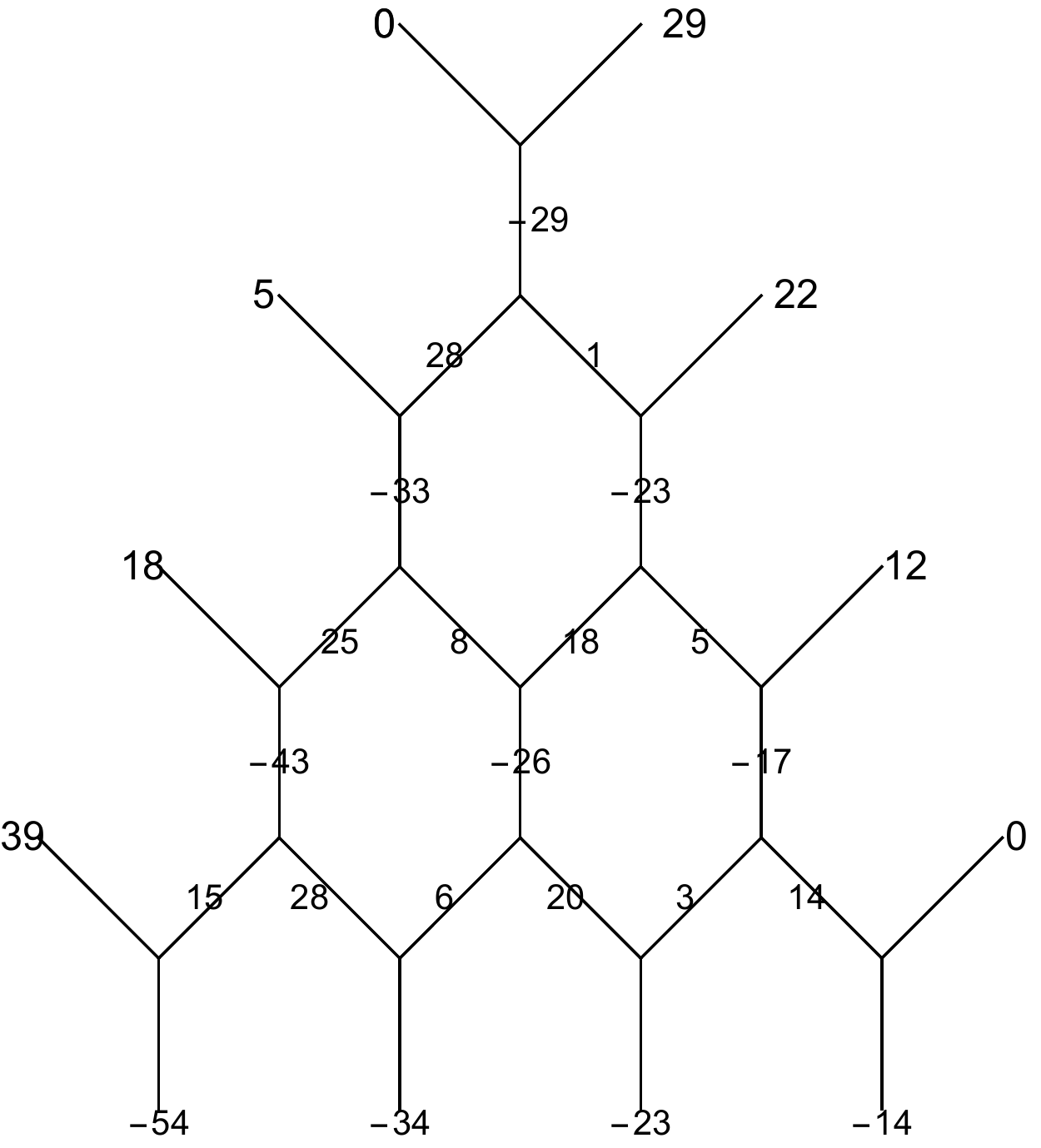}\hskip35mm
 \raisebox{7ex}{\includegraphics[width=13pc]{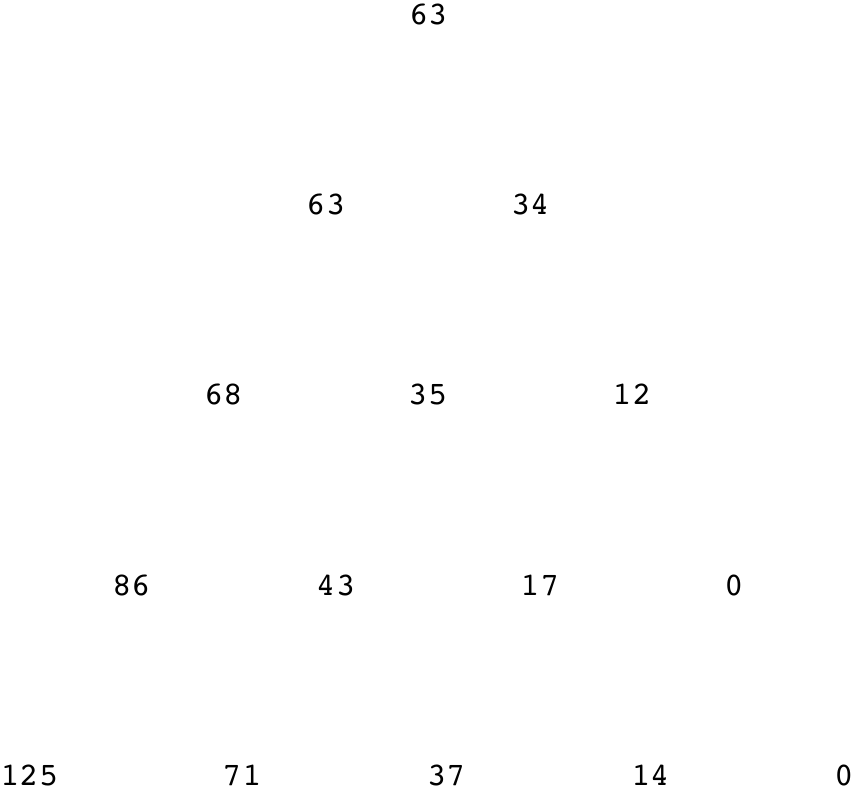}}
 \label{isometriX} 
 \caption{\label{apictograph}One of the 367 pictographs associated with $((21,13, 5), (7,10, 12); (20,11,9))$.
  For completeness we also give below the corresponding KT-honeycomb and its dual  hive.
}
\end{figure}

\subsubsection{An example in SU(5)}
\label{exampleSU5}
Consider the following tensor branching rule of $\SU(5)$: $(\lambda,\mu; \nu)$ with $\lambda=(1,3,2,3)$,  $\mu=(2,1,4,2)$, $\nu=(3,1,4,3)$.
The hive polytope $\CH_{\lambda\mu}^\nu$  has dimension $d=6$. We shall see that it is {\sl not \/} an integral polytope.
We denote ${\mathcal Q}$   the convex hull of its integral points.}
$\CH_{\lambda\mu}^\nu$  has $66$ vertices and $99$ points, all of them being boundary points.
${\mathcal Q}$ has $64$ vertices and $99$ points (the latter being the same as for $\CH_{\lambda\mu}^\nu$, by definition).
Therefore we see that $2$ vertices of $\CH_{\lambda\mu}^\nu$ are not (integral) points of $\CH_{\lambda\mu}^\nu$.
The normalized volume of $\CH_{\lambda\mu}^\nu$ is $2544$ (it is 2538 for ${\mathcal Q}$). 
The normalized volume of the boundary of $\CH_{\lambda\mu}^\nu$  is $3630$ (it is $3618$ for 
${\mathcal Q}$). 
The LR polynomial  $P_{\lambda\mu}^\nu(s)$, \ie the Ehrhart 
polynomial of $\CH_{\lambda\mu}^\nu$, is
$53 s^6/15 + 121 s^5/8  + 667 s^4/24 + 679 s^3/24  + 687 s^2/40 + 73 s/12 +1  $.
In the case of $\CH_{\lambda\mu}^\nu$ , we check the first two coefficients related to the $6$-volume of the polytope and to the $5$-volume of the facets: $2544/6! = 53/15$ and $1/2 \times 3630 / 5! = 121/8$.
The Ehrhart polynomial of
${\mathcal Q}$ is $141 s^6/40 + 603 s^5/40 + 665 s^4/24 + 679 s^3/24 + 259 s^2/15+ 92 s/15  +1$.
In the case of ${\mathcal Q}$,
the same volume checks read: $2538/6! = 141/40$ and $1/2 \times  3618 / 5! = 603/40$.

An independent calculation using the function $\CJ_5$
gives
$\CJ_5(\ell(\lambda),\ell(\mu); \ell(\nu))= 53/15$, 
the leading coefficient of the stretching polynomial.

In the present example, where $\CH_{\lambda\mu}^\nu$  and  
${\mathcal Q}$ differ, it is instructive to consider what happens under scaling.
The two vertices of $\CH_{\lambda\mu}^\nu$  that are not integral points are actually half-integral points, so that they become integral by doubling.
The polytope $2 \CH_{\lambda\mu}^\nu$ has again $66$ vertices (by construction), it is integral, it has 1463 points, 18 being interior points and 1445 being boundary points.
It could also be constructed as the hive polytope  associated with the doubled branching rule $(2\lambda, 2\mu; 2\nu)$, and its own Littlewood-Richardson (LR) polynomial, equal to its Ehrhart polynomial, can be obtained from the LR polynomial of $ \CH_{\lambda\mu}^\nu$ by substituting $s$ to $2s$.\\
The polytope $2{\mathcal Q}$ 
has again $64$ vertices (of course), it is integral, it has 1460 points, 18 being interior points ans 1442 being boundary points.
Since ${\mathcal Q} 
\subset  \CH_{\lambda\mu}^\nu$ we have 
$2{\mathcal Q} \subset  2\CH_{\lambda\mu}^\nu$, but now both polytopes are integral (and they are different).\\
${\mathcal Q}$ and $\CH_{\lambda\mu}^\nu$  have the same integral points, so, in a sense, they describe the same multiplicity for the chosen triple $(\lambda,\mu; \nu)$, 
however, under stretching (here doubling) of the branching rule, we have to consider $2\CH_{\lambda\mu}^\nu$, not 
$2{\mathcal Q}$, otherwise we would miss three honeycombs ($= 1463 - 1460$) and find an erroneous multiplicity.
These three honeycombs correspond to the two (integral) vertices of $2\CH_{\lambda\mu}^\nu$ coming from the two (non integral) vertices of $\CH_{\lambda\mu}^\nu$  that became integral under doubling, plus one extra (integral) point, which is a convex combination of vertices. For illustration purposes we give below the three pictographs (in the O-blade version) that correspond to these three points.

\begin{figure}[bthp]
\centering{
\includegraphics[width=40pc]{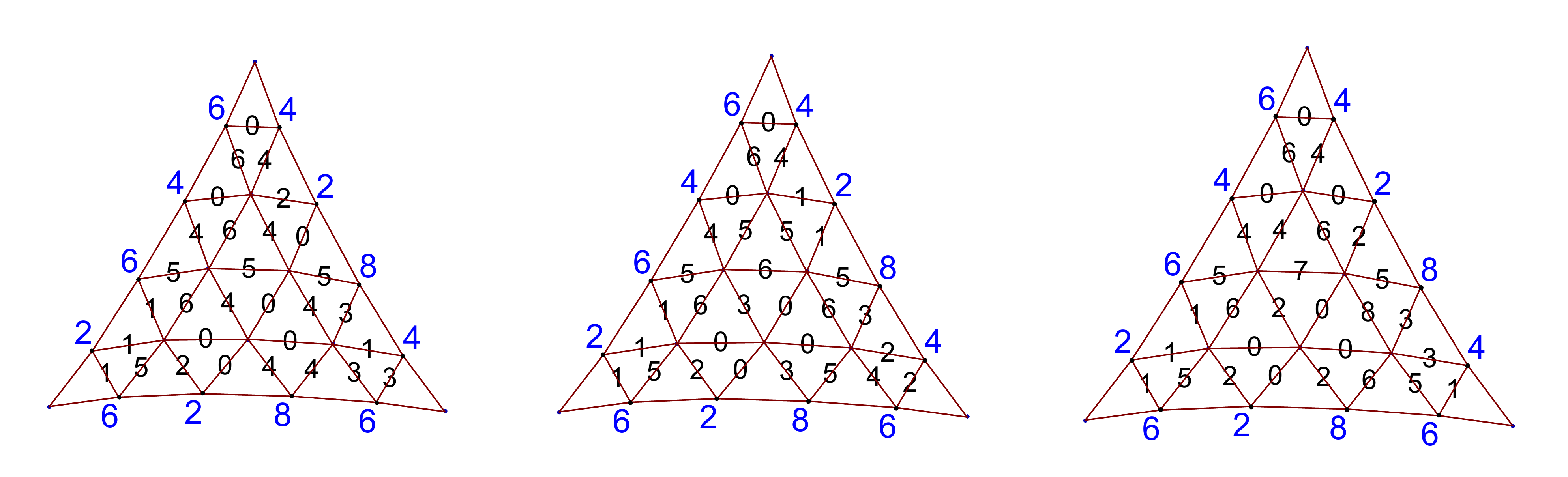} 
\caption{\label{obladeSU5troisgraces} 
The three $\SU(5)$ pictographs (O-blade version) associated with $(2 \lambda, 2 \mu; 2 \nu)$, with $\lambda= (1,3, 2, 3) $, $\mu= (2, 1, 4, 2)$, $\nu= (3, 1, 4, 3)$
that belong to the hive polytope  of this doubled branching rule but that do not belong to the double of the integral part of the hive polytope  of $(\lambda,\mu;  \nu)$.}
}
\end{figure}

\subsubsection{An example in SU(6)}
\label{exampleSU6}
We consider the following tensor branching rule of $\SU(6)$: $(\lambda,\mu; \nu)$ with $\lambda=(1,3,1,2,1)$,  $\mu=(2,1,3,2,1)$, $\nu=(4,1,6,2,1)$. The multiplicity is $38$.

For SU(6), the number of fundamental pictographs is $5\times 3+2\times 10$ but there are $10$ syzygies (one for each inner hexagon in the honeycomb picture) so that a basis has $25$ elements,  the set of $38$ (integral) honeycombs is then described as a $25 \times 38$ matrix. The convex hull of these $38$ points is then calculated, one finds that it is a $10$ dimensional polytope ${\mathcal Q}$ (in $\R^{25}$). The obtained polytope --which has no interior point and $38$ integral points, $36$ of them being vertices-- happens {\sl not} to coincide with the hive polytope ${\mathcal H}$ (we are in a situation analogous to the one examined in the previous SU(5) example).  A quick study of  ${\mathcal Q}$ reveals that this polytope, and so ${\mathcal H}$ itself, has dimension $10$,  and that the chosen triple is therefore generic.\\
The fact that ${\mathcal H}$ differs from ${\mathcal Q}$ can be seen in (at least) three different ways:  1) The Ehrhart polynomial of ${\mathcal Q}$ fails to recover the multiplicity of $(s\,\lambda,s\,\mu; s\,\nu)$, already for $s=2$ where the multiplicity is $511$.  2) The leading coefficient ($30/9!$) of this polynomial, hence the normalized volume of ${\mathcal Q}$,  differs from $\CJ_6(\ell(\lambda),\ell(\mu); \ell(\nu))=32/9!$ determined {directly or} from Theorem \ref{CI-LR} (part 2), we shall come back to this below. 3) A direct determination of the polytope ${\mathcal H}$ obtained as an intersection of $45$ half-spaces --interpreted for instance as the number of (positive) edges in the oblade picture-- will show that ${\mathcal H}$ is not an integral polytope (its vertices, aka corners, are rational but not all integral) and its integral part is indeed  ${\mathcal Q}$. We leave this as an exercise to the reader.
The LR-polynomial associated with the chosen triple, equivalently the Ehrhart polynomial of ${\mathcal H}$, is equal to 
$$
\frac{s^{10}}{11340}+\frac{67 s^9}{24192}+\frac{899 s^8}{24192}+\frac{5639 s^7}{20160}+\frac{11281 s^6}{8640}+\frac{22763 s^5}{5760}+\frac{572777
   s^4}{72576}+\frac{78481 s^3}{7560}+\frac{88351 s^2}{10080}+\frac{3683 s}{840}+1
$$
while the Ehrhart polynomial of ${\mathcal Q}$ is
$$\frac{s^{10}}{12096}+\frac{947 s^9}{362880}+\frac{203 s^8}{5760}+\frac{3235 s^7}{12096}+\frac{227 s^6}{180}+\frac{66767 s^5}{17280}+\frac{946187
   s^4}{120960}+\frac{94585 s^3}{9072}+\frac{1421 s^2}{160}+\frac{11189 s}{2520}+1\,.$$
The coefficient of $s^{10}$, equal to $1/11340=32/9!$ and interpreted as the normalized volume of ${\mathcal H}$, 
can be obtained from a direct evaluation of the expression of $\CJ_6$, but it can also be obtained easily from Theorem \ref{CI-LR} (part 2).  This double sum (\ref{CI-LR1ev0})   involves the seven weights $\kappa$ together with the seven associated coefficients $\widehat{r_\kappa}$ that appear in (\ref{Rhat6}) 
and turns out to involve only the following weights $\nu^\prime$: $(1,2,2,2,0),(1,2,3,0,1),(2,1,2,1,1), (2,1,3,0,0)$.
Most terms are actually zero (because of the vanishing of many Littlewood-Richardson coefficients), and the result is $(1+2+2+1+13+13)/9! = 32/9!$.

 \section*{Acknowledgements} 
We acknowledge stimulating discussions with 
  Olivier Babelon,  Paul Zinn-Justin  and especially Allen Knutson and Mich\`ele Vergne.

\end{document}